\newcommand{\oname}[1]{\operatorname{#1}}
\newcommand{\eqdef}{\overset{\mathrm{def}}{=\joinrel=}}
\newcommand{\bs}[1]{\boldsymbol{#1}}
\def\abs#1{\left|#1\right|}
\title{A general approach to enhance slope limiters on non-uniform rectilinear grids}
\author{Xianyi Zeng\thanks{Postdoctoral associate, Department of Civil and Environmental Engineering, Duke University, Durham, North Carolina 27708 ({\tt xy.zeng@duke.edu}).}}
\begin{document}

\maketitle

\setlength{\unitlength}{1cm}
\setlength{\tabcolsep}{1pt}

\begin{abstract}
Most slope limiter functions in high-resolution finite volume methods to solve hyperbolic conservation laws are designed assuming one-dimensional uniform grids, and they are also used to compute slope limiters in computations on non-uniform rectilinear grids.
However, this strategy may lead to either loss of total variation diminishing (TVD) stability for 1D linear problems or the loss of formal second-order accuracy if the grid is highly non-uniform.
This is especially true when the limiter function is not piecewise linear.
Numerical evidences are provided to support this argument for two popular finite volume strategies: MUSCL in space and method of lines in time (MUSCL-MOL), and capacity-form differencing.
In order to deal with this issue, this paper presents a general approach to study and enhance the slope limiter functions for highly non-uniform grids in the MUSCL-MOL framework.
This approach extends the classical reconstruct-evolve-project procedure to general grids, and it gives sufficient conditions for a slope limiter function leading to a TVD stable, formal second-order accuracy in space, and symmetry preserving numerical scheme on arbitrary grids.
Several widely used limiter functions, including the smooth ones by van Leer and van Albada, are enhanced to satisfy these conditions.
These properties are confirmed by solving various one-dimensional and two-dimensional benchmark problems using the enhanced limiters on highly non-uniform rectilinear grids.
\end{abstract}

\begin{keywords}
finite volume method, MUSCL, capacity-form differencing, slope limiter, non-uniform rectilinear grid, TVD stability, symmetry preserving
\end{keywords}

\begin{AMS}
65M06,65M08,65M12,35L65
\end{AMS}

\pagestyle{myheadings}
\thispagestyle{plain}
\markboth{X. ZENG}{SLOPE LIMITERS ON NON-UNIFORM GRIDS}

\section{Introduction}
\label{sec:intro}
Limiting strategies are often employed in high-resolution finite volume methods (FVM) to solve scientific and engineering problems that are governed by time-dependent hyperbolic conservation laws. 
These problems are usually characterized by discontinuous solutions at finite time even when the initial conditions are smooth.
Practical limiting strategies include the slope limiters~\cite{BvanLeer:1979a, SOsher:1985a, PColella:1985a, MEHubbard:1999a} and the flux limiters~\cite{JPBoris:1973a, STZalesak:1979a,CRDeVore:1991a}, and they are equivalent to each other in certain situations.
In both cases, the limiters are computed by evaluating a limiter function with a smoothness monitor as input, which measures the local solution regularity.
These limiter functions are originally designed using one-dimensional uniform grids.
It is in this simple situation a number of properties, such as total variation diminishing (TVD) stability, formal second-order accuracy in space for smooth solutions, and the symmetry-preserving property are guaranteed for solving 1D linear problems by using these limiter functions.
However, in practical computations they are often applied in computations using non-uniform grids.
The limitations of this strategy are well documented in literature~\cite{VVenkatakrishnan:1995a,MBerger:2005a}.

The limitations are further shown in this paper (Sections~\ref{sec:case}, \ref{sec:alt}, and \ref{sec:example}) by numerical examples.
In particular, two different finite volume methodologies are considered in these examples: (1) the monotonic upstream-centered scheme for conservation laws (MUSCL) in space and method of lines (MOL) in time, referred to as MUSCL-MOL in this paper, and (2) the capacity-form differencing.
Note that different implementations may lead to different numerical properties on non-uniform grids, but the general observation is that either the TVD stability or the formally second-order spatial accuracy will be lost, unless the piecewise linear limiter functions (such as the {\it minmod}, {\it superbee}, or {\it monotonized central (MC)} limiters) are used with care (more details are offered in Section~\ref{sec:alt}).
Because non-smooth limiter functions cause convergence difficulties for steady-state computations~\cite{VVenkatakrishnan:1995a}, using piecewise linear limiters is not always a solution to circumvent the aforementioned limitations.

This brings out the main question to be answered in the present paper: how to analyze and enhance the slope limiter functions on general rectilinear grids, so that all the desired properties (TVD stability, formal second-order accuracy, and symmetry-preserving) are maintained on these grids.
To this end, this paper starts with the MUSCL-MOL framework, and chooses a representative implementation, which leads to the loss of second-order accuracy but retaining the TVD stability (in the 1D linear case) if conventional limiter functions are used.
The effects of limiters are studied by extending the classical reconstruct-evolve-project (REP) procedure~\cite{JBGoodman:1988a} to take into account of the non-uniformity of the grids.
The analysis leads to sufficient conditions that a slope limiter function should satisfy, so that the TVD stability, second-order accuracy, and the symmetry-preserving property are preserved on general one-dimensional grids.
Several most widely used limiter functions are enhanced, so that they satisfy these conditions.
In particular, the enhanced {\it minmod}, {\it superbee}, and {\it MC} limiters have exactly the same form as given in~\cite{MBerger:2005a}; 
the enhanced {\it van Leer} limiter is smoother than the one in the same reference; 
and the enhanced {\it van Albada} limiter, to the knowledge of the author, is the first time proposed.

These sufficient conditions may not be valid for other MUSCL implementation, but the methodology extends naturally.
However, the methodology does not apply to the capacity-form differencing, mainly due to the reason that the latter uses the classical Riemann problem to approximate the Riemann problem of the governing equation rewritten in capacity-form~\cite{RJLeVeque:2002a}.
For this reason, using conventional limiters may not be fully responsible to the loss of stability or accuracy of the capacity-form differencing on non-uniform grids, as observed in the numerical examples in this paper. 

The remainder of the paper is organized as follows. 
Section~\ref{sec:case} briefly reviews MUSCL-MOL and presents a case study to demonstrate the limitation of conventional limiter functions for a particular implementation of MUSCL on highly non-uniform grids. 
The slope limiters are studied using an extension of the REP procedure in Section~\ref{sec:math}, in which sufficient conditions are derived for designing slope limiter functions such that the resulting schemes are TVD stable, second-order accurate in space and preserve symmetric solutions.
Section~\ref{sec:limiter} contains enhancements to several most widely used limiter functions, so that they satisfy these conditions.
Further 1D and 2D examples are present in Section~\ref{sec:alt} and Section~\ref{sec:example}, respectively, to confirm that the enhanced limiter functions perform as expected, and to compare the results with alternative strategies including MUSCL with a different implementation and the capacity-form differencing.
Finally, Section~\ref{sec:concl} concludes this paper.

\section{Review of MUSCL and Case study}
\label{sec:case}

\subsection{The MUSCL scheme}
Consider applying the MUSCL scheme~\cite{BvanLeer:1979a} to solve the 1D scalar conservation law 
\begin{equation}\label{eq:sec_case_1dcl}
u_t + f(u)_x = 0
\end{equation}
The computational domain $\Omega$ is divided into cells $\Omega_i = [x_{i-1/2},x_{i+1/2}]$, where $\cdots<x_{i-1/2}<x_{i+1/2}<x_{i+3/2}<\cdots$ are cell faces.
The center of $\Omega_i$ is $x_i = (x_{i-1/2}+x_{i+1/2})/2$, and the size of the cell is $\Delta x_i\eqdef|\Omega_i| = x_{i+1/2}-x_{i-1/2}$.
The mesh is supposed to be fixed in time.

The following notations are used in this paper: 
$u(x,t)$ denotes the exact solution of the PDE of a variable $u$, $\bar{u}_i(t)$ designates the exact cell-averaged data at time-instance $t$ over the cell $\Omega_i$, $u_i^n$ designates the numerical approximation of the value $\bar{u}_i(t^n)$, and $u_i$ denotes the semi-discretized approximation of $\bar{u}_i$.

Integrating Eq.~(\ref{eq:sec_case_1dcl}) in space over $\Omega_i$, one obtains
\begin{equation}\label{eq:sec_case_int}
\frac{d\bar{u}_i(t)}{dt} + \frac{f(u(x_{i+1/2},t))-f(u(x_{i-1/2},t))}{\Delta x_i} = 0
\end{equation}
where $\bar{u}_i(t)$ is defined by
\begin{equation}\label{eq:sec_case_ave}
\bar{u}_i(t) = \frac{1}{\Delta x_i}\int_{x_{i-1/2}}^{x_{i+1/2}}u(x,t)dx
\end{equation}
Eq.~(\ref{eq:sec_case_int}) leads to the semi-discretized equations for the approximated solution $u_i$
\begin{equation}\label{eq:sec_case_semi}
\frac{du_i}{dt} + \frac{F_{i+1/2}-F_{i-1/2}}{\Delta x_i} = 0
\end{equation}
Here $F_{i+1/2}$ is the numerical flux approximating $f(u(x_{i+1/2},t))$; and it is calculated using the semi-discrete solutions $u_k$. 
In the MUSCL approach, these fluxes are obtained from exact or approximated Riemann solvers, namely
\begin{equation}\label{eq:sec_case_linr}
F_{i+1/2} = F^{Riem}(u_i+\frac{1}{2}\sigma_i\Delta x_i, u_{i+1}-\frac{1}{2}\sigma_{i+1}\Delta x_{i+1})
\end{equation}
Here $F^{Riem}(u_l,u_r)$ is the Riemann solver that takes the left value $u_l$ and right value $u_r$ as inputs, and $\sigma_i$ is the numerical slope in the cell $\Omega_i$.
The canonical Godunov scheme~\cite{SKGodunov:1959a} fits in this framework by setting $\sigma_{i}\equiv0$ and using the exact Riemann solver.
Due to the high computational cost of the exact Riemann solver for Euler equations, the Roe solver~\cite{PLRoe:1981a} $F^{Roe}$ is used in this paper.

When $\sigma_i$ is consistent with the local gradient $u_x(x_i,t)$, Eq.~(\ref{eq:sec_case_linr}) leads to a second-order accurate discretization in space~\cite{SOsher:1985a}.
One of the most natural choices for such $\sigma_i$ on uniform meshes is given by
\begin{equation}\label{eq:sec_case_slpe}
\sigma_i = D_x^+u_i\eqdef\frac{1}{\Delta x_i}(u_{i+1}-u_i)
\end{equation}
To avoid spurious oscillations near discontinuities caused by Eq.~(\ref{eq:sec_case_slpe}), 
the MUSCL approach scales $D_xu_i$ with a factor $\phi$, called the slope limiter
\begin{equation}\label{eq:sec_case_muscl}
\sigma_i = \phi_i D_xu_i
\end{equation}
Here and in the remains of the paper, the symbol $\phi$ is reserved for slope limiters. 

Once the limited slopes $\sigma_i$ are computed, one may use any preferred ODE solver, such as the forward Euler (for simplicity of analysis), to discretize Eq.~(\ref{eq:sec_case_semi}) in time
\begin{equation}\label{eq:sec_case_fw}
\frac{u_i^{n+1}-u_i^n}{\Delta t} + \frac{F_{i+1/2}^n-F_{i-1/2}^n}{\Delta x_i} = 0
\end{equation}

The nature of this numerical approximation is clear by integrating Eq.~(\ref{eq:sec_case_int}) over the time interval $[t^n, t^{n+1}]$ to obtain an exact integral form of (\ref{eq:sec_case_1dcl}), namely
\begin{align}
\notag
  \frac{\bar{u}_i(t^{n+1})-\bar{u}_i(t^n)}{\Delta t^n}
+ \frac{1}{\Delta x_i}&\left(
\frac{1}{\Delta t^n}\int_{t^n}^{t^{n+1}}f(u(x_{i+1/2},t))dt \right.\\
\label{eq:sec_case_int_form}
&\left.- \frac{1}{\Delta t^n}\int_{t^n}^{t^{n+1}}f(u(x_{i-1/2},t))dt
\right) = 0
\end{align}
where $\Delta t^n = t^{n+1}-t^n$. 

Thus the numerical flux $F_{i+1/2}^n$ is interpreted as an approximation to the time-averaged flux at the cell face
\begin{equation}\label{eq:sec_case_num_flux}
F_{i+1/2}^n\approx\frac{1}{\Delta t^n}\int_{t^n}^{t^{n+1}}f(u(x_{i+1/2},t))dt
\end{equation}
This interpretation plays an important role in subsequent analysis in Section~\ref{sec:math}.

\subsection{Slope limiters}
In practice, $\phi_i$ in Eq.~(\ref{eq:sec_case_slpe}) is a function of a local smoothness monitor $\theta_i$, of which a popular choice \cite{BvanLeer:1973a, MBerger:2005a} is 
\begin{equation}\label{eq:sec_case_monitor}
\theta_i = \frac{u_i-u_{i-1}}{u_{i+1}-u_i}
\end{equation}
Conventional practice of MUSCL defines the limiters $\phi_i$ as
\begin{equation}\label{eq:sec_case_conv}
\phi_i = \phi(\theta_i)
\end{equation}
where the absence of the subscript of $\phi()$ highlights the common strategy that a single limiter function is applied everywhere in the computational domain. 

On the one hand, A. Harten~\cite{AHarten:1983a} showed that on uniform meshes, a sufficient condition for the fully discretized system~(\ref{eq:sec_case_fw}) to be TVD stable is 
\begin{equation}\label{eq:sec_case_tvd}
0\le\phi(\theta)\le2,\quad 0\le\frac{\phi(\theta)}{\theta}\le 2,\quad\forall\theta\in\mathbb{R}
\end{equation}

On the other hand, one may study the effectiveness of limiters in retaining formal second-order accuracy for smooth solutions by writing Eq.~(\ref{eq:sec_case_linr}) in the flux-corrected form
\begin{equation}\label{eq:sec_case_fc}
F(u_i+\frac{1}{2}\sigma_i\Delta x_i, u_{i+1}-\frac{1}{2}\sigma_{i+1}\Delta x_{i+1}) = F^L_{i+1/2}+\varphi_{i+1/2}\left(F_{i+1/2}^H-F^L_{i+1/2}\right)
\end{equation}
Here $F^L$ is any first-order numerical flux $F^L_{i+1/2} = F(u_i,u_{i+1})$; 
and $F^H$ designates a second-order numerical flux.
The flux limiters $\varphi$ are defined at the cell faces, and they make the solution-adapted transition from low-order flux to second-order flux possible.
Typically, $\varphi_{i+1/2}$ is a function of another local smoothness monitor defined at cell faces $\theta_{i+1/2}$ 
\begin{equation}\label{eq:sec_case_fluxlim}
\varphi_{i+1/2} = \varphi(\theta_{i+1/2})
\end{equation}
where the absence of the subscript of $\varphi()$ again highlights the practice that a single flux limiter is utilized everywhere.
A sufficient condition to preserve second-order spatial accuracy is $\varphi_{i+1/2}=1$ for linear data (linearity preserving), 
which leads to 
\begin{equation}\label{eq:sec_case_2nd}
\varphi(1) = 1
\end{equation}
in the case of uniform grids.

For linear problems, one can often define $\theta_j$ and $\theta_{j+1/2}$ properly so that Eqs.~(\ref{eq:sec_case_fc}) and (\ref{eq:sec_case_linr}) are equivalent to each other. 
This provides a convenient tool to study the formal order of accuracy a limiter could deliver, as presented in Section~\ref{sec:math}.

Table~\ref{tb:sec_case_lim} lists several most widely used limiter functions~\cite{PLRoe:1986a, BvanLeer:1974a, BvanLeer:1977a, GDvanAlbada:1982a}. 
Here the superscript $^+$ designates the positive part of a real number.
All these limiter functions satisfy~(\ref{eq:sec_case_tvd}) and (\ref{eq:sec_case_2nd}). 
\begin{table}\centering
\caption{Limiter functions $\phi^{\it \textrm{minmod}}, \phi^{\it \textrm{superbee}}, \phi^{\it \textrm{MC}}, \phi^{\it \textrm{van Leer}}$, and $\phi^{\it \textrm{van Albada}}$} 
\label{tb:sec_case_lim}
{\small
\begin{tabular}{|c||c|c|c|c|c|}
\hline
Limiter        
& minmod                   
& superbee                                 
& MC 
& van Leer 
& van Albada \\ \hline
$\phi(\theta)$ 
& $\max(\theta,1)^+$
& $\max(\min(2\theta,1),\min(\theta,2))^+$
& $\min(2\theta,\frac{1+\theta}{2},2)^+$
& $\frac{\theta+\abs{\theta}}{1+\abs{\theta}}$ 
& $\frac{\theta+\theta^2}{1+\theta^2}$ \\ \hline
\end{tabular} 
}
\end{table}

\subsection{Loss of accuracy on non-uniform grids}
The preceding methods, however, lead to loss of accuracy on highly irregular grids, as demonstrated by the subsequent example.
Consider the 1D Euler equations with periodic boundary conditions
\begin{equation}\label{eq:sec_case_1deuler}
\mathbf{w}_t + \mathbf{f}(\mathbf{w})_x = 0,\quad
\mathbf{w} = 
\left[\begin{array}{c}
\rho \\ \rho u \\ E
\end{array}\right],\quad
\mathbf{f}(\mathbf{w}) = 
\left[\begin{array}{c}
\rho u \\ \rho u^2+p \\ u(E+p)
\end{array}\right],\quad x\in[-1,1]
\end{equation}
where $\mathbf{w}$ is the conservative fluid state vector and $\mathbf{f}$ is the physical flux function. 
$\rho$, $u$ and $p$ are density, velocity, and pressure, respectively.
$E = p/(\gamma-1) + \rho u^2/2$ is the total energy, with $\gamma=1.4$ being the heat capacity ratio.
The initial condition is 
\begin{equation}\label{eq:sec_case_ic}
\rho(x,0) = 1 + 0.5\sin(\pi x),\quad
u(x,0) = 2 + 0.5\sin(\pi x),\quad
p(x,0) = 1 + 0.5\sin(\pi x)
\end{equation}

Given the number of cells $N$, the irregular mesh is constructed by perturbing the uniform grid as follows
{\small
\begin{enumerate}
\item Let $x_{i+1/2}',\ i=0,1,\cdots,N$ be the cell faces of the uniform mesh.
\item Set a fixed ratio $r: 0\le r<0.5$. 
For each grid point $x_{i+1/2}', i=1,\cdots,N-1$, define $x_{i+1/2} = x_{i+1/2}'+r\delta_{i+1/2}$ where $\delta_{i+1/2}$ are independent random variables obeying uniform distribution on $[-h, h]$, $h=2/N$.
\item The two endpoints are fixed: $x_{1/2}=x_{1/2}'$ and $x_{(2N+1)/2}=x_{(2N+1)/2}'$.
\end{enumerate}
}
In all tests, MUSCL-MOL with Roe flux and limiter functions in Table~\ref{tb:sec_case_lim} in space, and second-order TVD Runge-Kutta (TVD RK2)~\cite{SGottlieb:1998a} in time is used.
The time step size is computed using the Courant number $0.6$.

Given any limiter function and ratio $r$, solutions on five grids with number of cells ranging from $100$ to $1600$ are computed. 
The global $L_1$ errors in $\rho$ calculated with $\phi^{\it \textrm{van Albada}}$ are reported in Table~\ref{tb:sec_case_l1_den}.
\begin{table}\centering
\caption{$L_1$ errors in $\rho$ using RK2, MUSCL with $\phi^{\it \textrm{van Albada}}$, and grids with $r=0, 0.2$, and $0.3$}
\label{tb:sec_case_l1_den}
{\small
\setlength{\tabcolsep}{5pt}
\begin{tabular}{|c||cc|cc|cc|}
\hline
& \multicolumn{2}{c|}{$r=0.0$} 
& \multicolumn{2}{c|}{$r=0.2$}
& \multicolumn{2}{c|}{$r=0.3$} \\ \hline
Mesh & error & rate & error & rate & error & rate \\ \hline
$100$  & $3.40\text{\sc{e}-}3$ &        & $4.55\text{\sc{e}-}3$ &        & $7.82\text{\sc{e}-}3$ &        \\
$200$  & $9.66\text{\sc{e}-}4$ & $1.82$ & $2.12\text{\sc{e}-}3$ & $1.10$ & $4.02\text{\sc{e}-}3$ & $0.96$ \\
$400$  & $2.15\text{\sc{e}-}4$ & $2.17$ & $9.82\text{\sc{e}-}4$ & $1.11$ & $2.04\text{\sc{e}-}3$ & $0.98$ \\
$800$  & $4.61\text{\sc{e}-}5$ & $2.22$ & $4.68\text{\sc{e}-}4$ & $1.07$ & $9.72\text{\sc{e}-}4$ & $1.07$ \\
$1600$ & $1.04\text{\sc{e}-}5$ & $2.15$ & $2.41\text{\sc{e}-}4$ & $0.96$ & $5.03\text{\sc{e}-}4$ & $0.95$ \\ \hline
\end{tabular}
\setlength{\tabcolsep}{1pt}
}
\end{table}
Similarly, Table~\ref{tb:sec_case_l1_err} summarizes the convergence rates (using the two coarsest meshes) in $\rho$, $u$, and $p$ by various limiter functions; 
the rates on irregular grids are computed according to the method in Appendix~\ref{app:rates}.
\begin{table}\centering
\caption{Convergence rates in $\rho$, $u$, and $p$ by various limiter functions}
\label{tb:sec_case_l1_err}
{\small
\setlength{\tabcolsep}{5pt}
\begin{tabular}{|c||ccc|ccc|ccc|}
\hline
& \multicolumn{3}{c|}{$r=0.0$} 
& \multicolumn{3}{c|}{$r=0.2$}
& \multicolumn{3}{c|}{$r=0.3$} \\ \hline
Limiter & $\rho$ & $u$ & $p$ & $\rho$ & $u$ & $p$ & $\rho$ & $u$ & $p$ \\ \hline
$\phi^{\it \textrm{minmod}}$      & $1.52$ & $1.70$ & $1.68$ & $0.98$ & $1.05$ & $1.02$ & $0.95$ & $1.02$ & $1.00$ \\
$\phi^{\it \textrm{superbee}}$    & $1.58$ & $1.95$ & $1.82$ & $1.08$ & $1.06$ & $1.06$ & $1.08$ & $1.11$ & $1.08$ \\
$\phi^{\it \textrm{MC}}$          & $2.13$ & $2.15$ & $2.22$ & $1.22$ & $1.33$ & $1.36$ & $1.06$ & $1.15$ & $1.19$ \\
$\phi^{\it \textrm{van Leer}}$   & $1.80$ & $1.99$ & $2.00$ & $1.12$ & $1.29$ & $1.28$ & $1.00$ & $1.09$ & $1.06$ \\
$\phi^{\it \textrm{van Albada}}$ & $1.82$ & $2.00$ & $1.99$ & $1.10$ & $1.26$ & $1.22$ & $0.96$ & $1.07$ & $1.01$ \\ \hline
\end{tabular}
\setlength{\tabcolsep}{1pt}
}
\end{table}

On the one hand, the results reveal that the convergence rates degrade towards first-order when the grids become more and more irregular.
On the other hand, the stability seems to be unaffected by the fact that irregular grids are utilized.
This is observed by long term computations (up to $T=6.0$), shown in Figure~\ref{fg:sec_case_lt}.
\begin{figure}\centering
\begin{minipage}[t]{.48\textwidth}
\includegraphics[width=\textwidth]{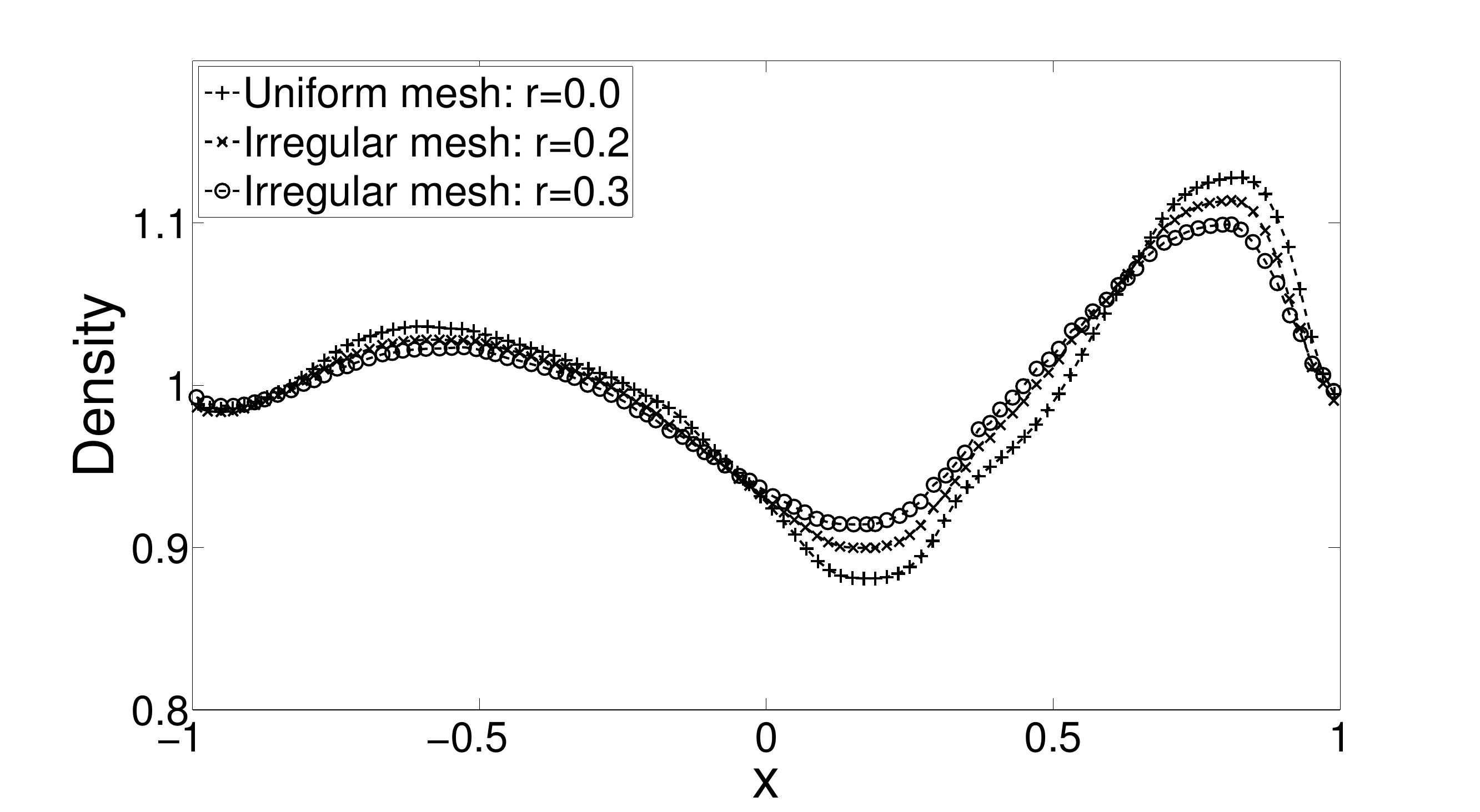}
\end{minipage}
\begin{minipage}[t]{.04\textwidth}
\end{minipage}
\begin{minipage}[t]{.48\textwidth}
\includegraphics[width=\textwidth]{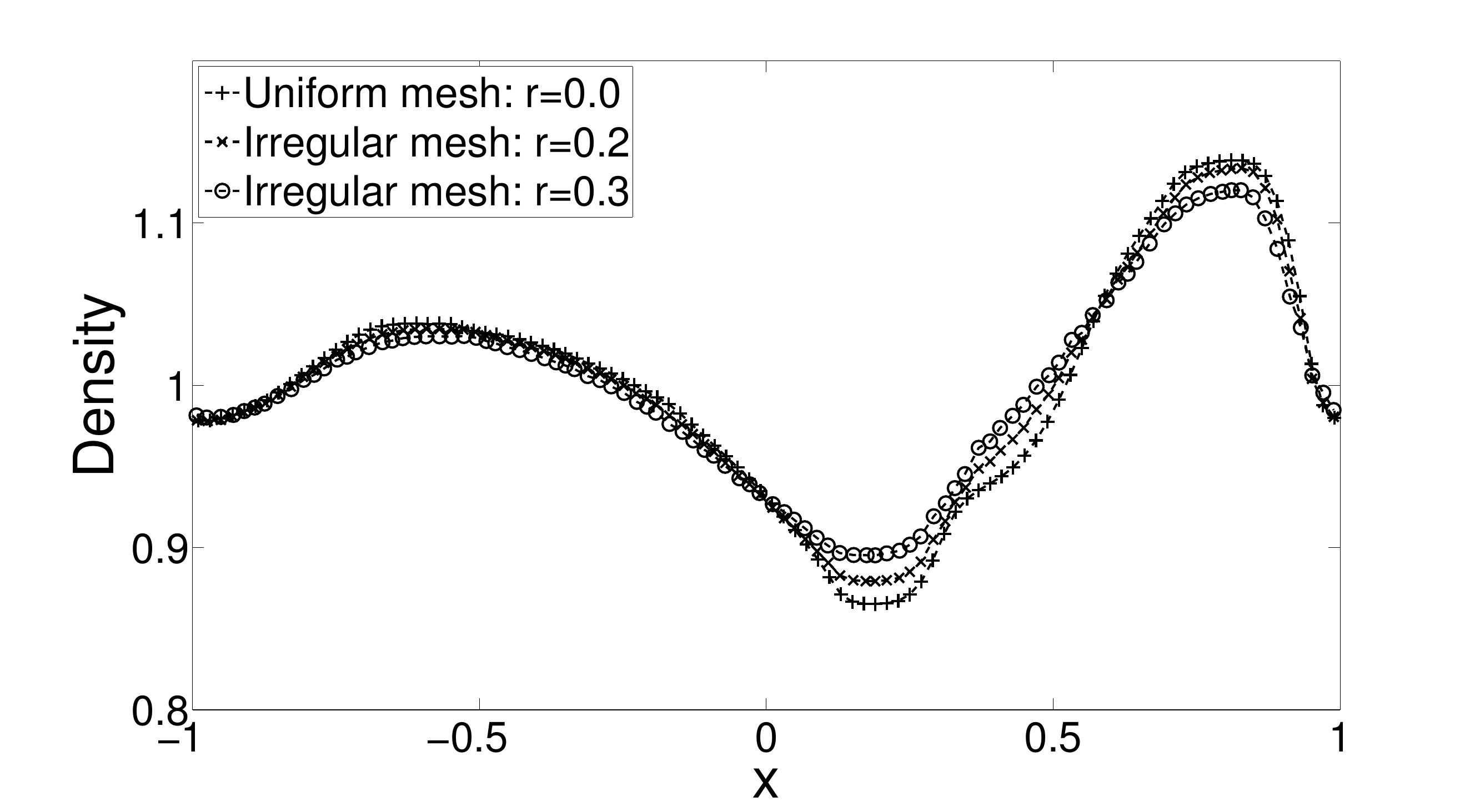}
\end{minipage}
\vglue -0.1 truein
\caption{Densities at $T=6.0$ using $\phi^{\it \textrm{minmod}}$ (left) and $\phi^{\it \textrm{van Albada}}$ (right)}
\label{fg:sec_case_lt}
\end{figure}

{\bf Remark 1:} 
In practice, the slope limiter is sometimes implemented in a way such that it is effective only if discontinuities are detected.
This strategy, however, falls into the range of discussion of current paper by noticing that it is equivalent to a limiter function, such as
\begin{displaymath}
\phi_{switch}(\theta) = 1 + \chi_{\{|\theta-1|>\delta\}}(\phi(\theta)-1)
\end{displaymath}
where $\chi$ is the characteristic function, and $\delta$ is the threshold to determine whether discontinuities present.

{\bf Remark 2:}
The particular choices of $D_xu_i$ and $\theta_i$, given by (\ref{eq:sec_case_slpe}) and (\ref{eq:sec_case_monitor}), respectively, are supposed throughout this paper, except in Section~\ref{sec:alt}, where alternative strategies are briefly discussed.

\section{Mathematical analysis}
\label{sec:math}
This section focuses on studying the mechanism behind which the conventional limiters lead to loss of accuracy on non-uniform grids while retaining the nonlinear stability.

\subsection{Slope limiters and TVD stability}
Applying the classical REP approach (Figure~\ref{fg:sec_math_rep}) to solve the 1D scalar advection equation
\begin{equation}\label{eq:sec_math_1dadv}
u_t+cu_x = 0,\quad c\equiv\textrm{constant}
\end{equation}
using non-uniform meshes, updating the data from $t^n$ to $t^{n+1}$ involves four components
{\small
\begin{enumerate}
\item Obtaining the cell averages $u_i^n$ from previous time step.
\item (Reconstruct.)
Linearly reconstructing the data on each cell to obtain the function $\tilde u(x,t^n)$ (Figure~\ref{fg:sec_math_rep_r})
\begin{equation}\label{eq:sec_math_linear_r}
\tilde u(x,t^n) = u_i^n+\sigma_i^n(x-x_i),\quad 
x\in(x_{i-1/2},\ x_{i+1/2})
\end{equation}
\item (Evolve.)
Solving Eq.~(\ref{eq:sec_math_1dadv}) exactly over $[t^n,t^{n+1}]$ with initial condition~(\ref{eq:sec_math_linear_r}) to have the solution at $t^{n+1}$, denoted by $\tilde u(x,t^{n+1})$ (Figure~\ref{fg:sec_math_rep_e}).
\item (Project.)
Computing cell averages at $t^{n+1}$ (Figure~\ref{fg:sec_math_rep_p}) as
\begin{equation}\label{eq:sec_math_ave}
u_i^{n+1} = \frac{1}{\Delta x_i}\int_{x_{i-1/2}}^{x_{i+1/2}}\tilde u(x,t^{n+1})dx
\end{equation}
\end{enumerate}
}
\begin{figure}\centering
\begin{subfigure}[b]{0.32\textwidth}\centering
\includegraphics[trim=2.0in 0.8in 1.1in 0.0in, clip, width=\textwidth]{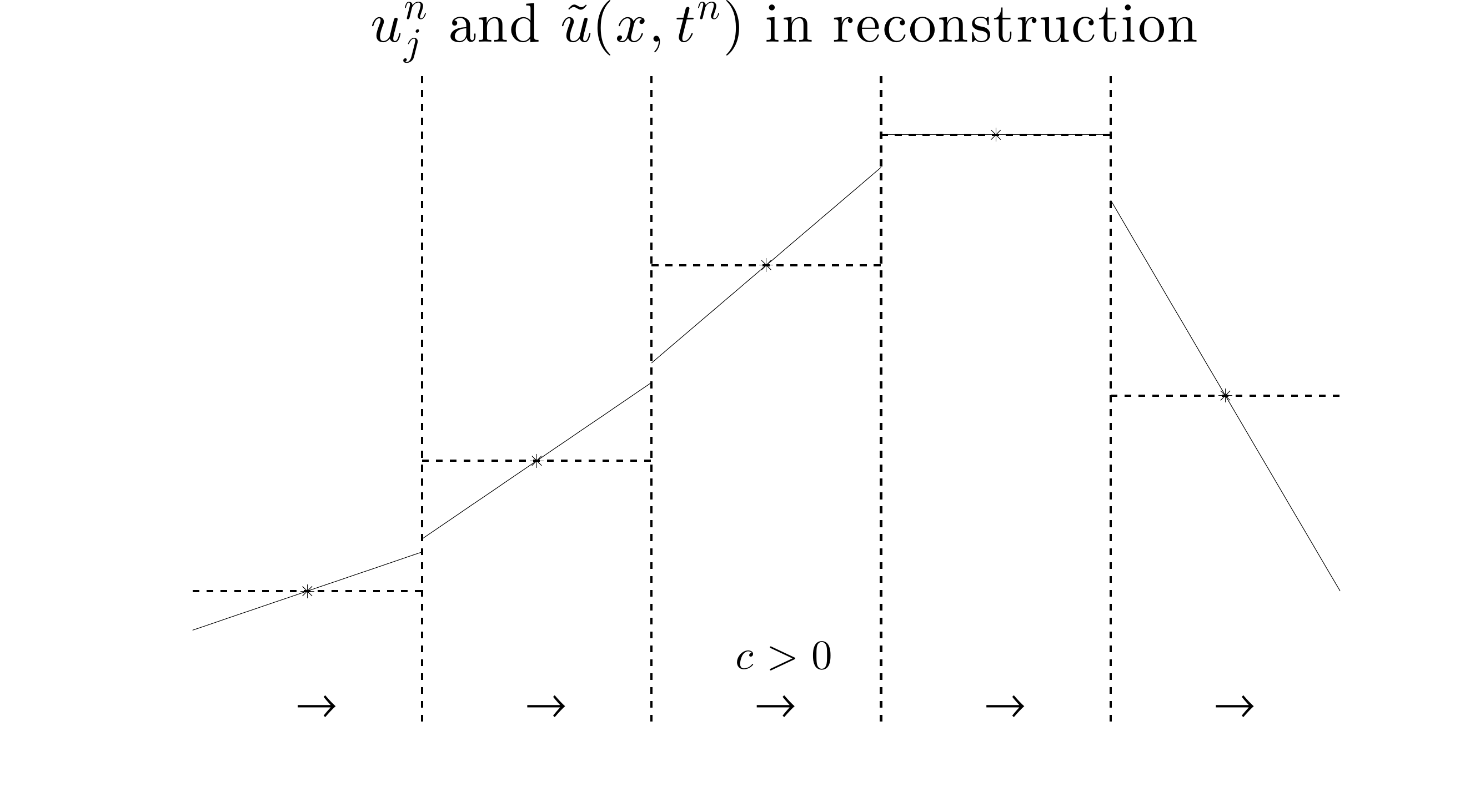}
\vglue -0.05 truein
\caption{REP-1}
\label{fg:sec_math_rep_r}
\end{subfigure}
\begin{subfigure}[b]{0.32\textwidth}\centering
\includegraphics[trim=2.0in 0.8in 1.1in 0.0in, clip, width=\textwidth]{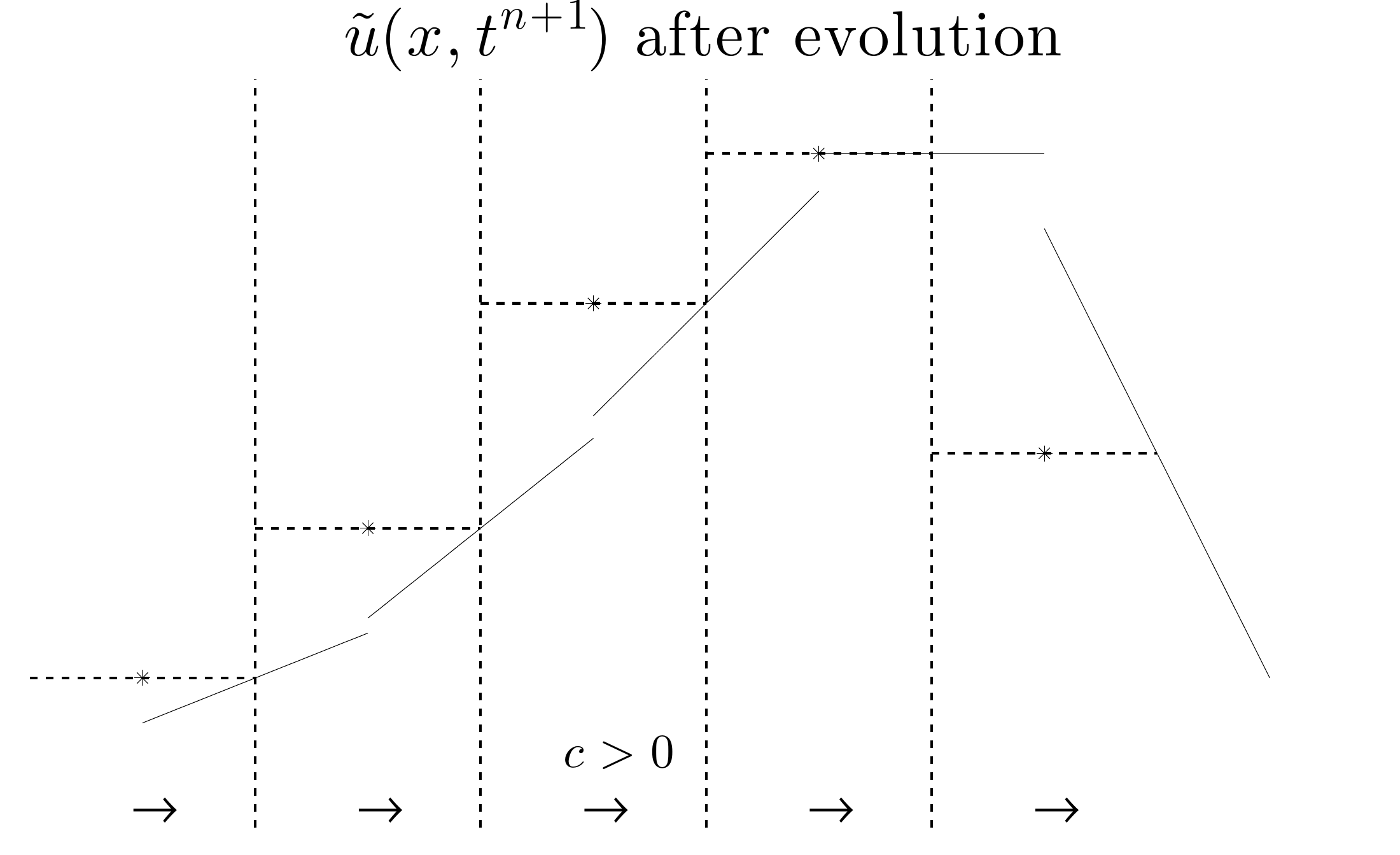}
\vglue -0.05 truein
\caption{REP-2}
\label{fg:sec_math_rep_e}
\end{subfigure}
\begin{subfigure}[b]{0.32\textwidth}\centering
\includegraphics[trim=2.0in 0.8in 1.1in 0.0in, clip, width=\textwidth]{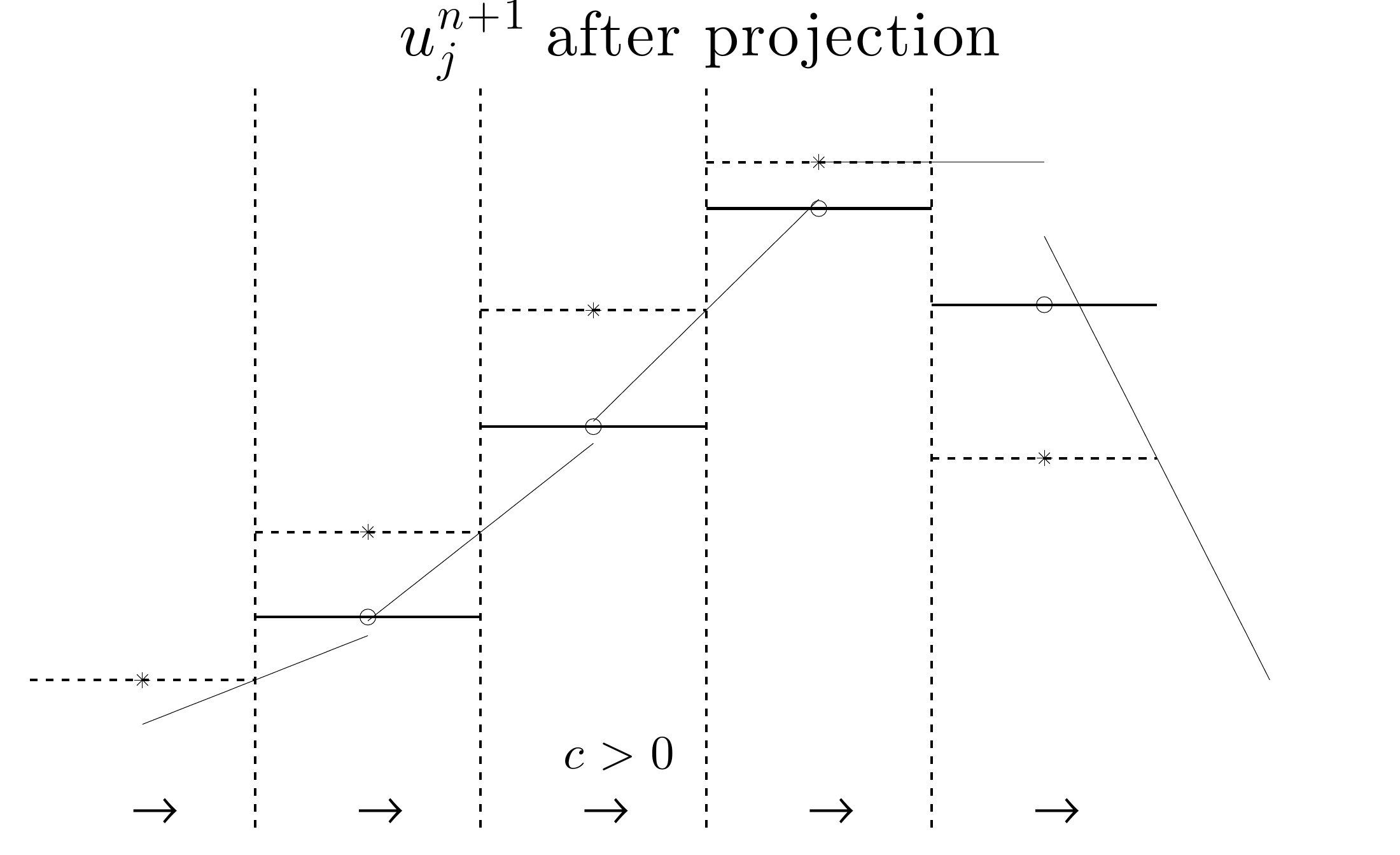}
\vglue -0.05 truein
\caption{REP-3}
\label{fg:sec_math_rep_p}
\end{subfigure}
\vglue -0.2 truein
\caption{REP approach: (\ref{fg:sec_math_rep_r}) cell averages and reconstructed data $\tilde u$ at $t^n$, (\ref{fg:sec_math_rep_e}) solution of $\tilde u$ at $t^{n+1}$, and (\ref{fg:sec_math_rep_p}) cell averages at $t^{n+1}$}
\label{fg:sec_math_rep}
\end{figure}
For general hyperbolic conservation laws, one may perform the ``evolve'' and ``project'' steps approximately.

A classical result concerning the TVD stability is due to Harten~\cite{AHarten:1983a}:
\begin{theorem}[Harten]\label{thm:sec_math_harten}
If the numerical scheme can be written, for one time step, in the following form:
\begin{equation}\label{eq:sec_math_harten_form}
u_i^{n+1} = u_i^n-C_{i-1}^n(u_i^n-u_{i-1}^n)+D_i^n(u_{i+1}^n-u_i^n)
\end{equation}
Then a sufficient condition for the scheme to be TVD is:
\begin{equation}\label{eq:sec_math_harten_cond}
C_{i-1}^n\ge0,\quad
D_i^n\ge0,\quad
C_i^n+D_i^n\le1;\quad
\forall i,n
\end{equation}
here $C_i^n$, $D_i^n$ can be any numbers including those that are data dependent.\\
\end{theorem}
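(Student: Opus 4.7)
The plan is to establish the TVD property directly from the definition, by bounding the total variation $TV(u^{n+1}) = \sum_i |u_{i+1}^{n+1} - u_i^{n+1}|$ in terms of $TV(u^n)$. The key algebraic trick is to take the consecutive difference of the update formula and rewrite it so that each term on the right has a nonnegative coefficient; once that is in place, the triangle inequality plus a clean reindexing of the sum does all the work.

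First I would write down the scheme at index $i+1$ and at index $i$, and subtract to get an expression for $u_{i+1}^{n+1} - u_i^{n+1}$. After collecting terms, this difference should read
\begin{equation*}
u_{i+1}^{n+1} - u_i^{n+1} = (1 - C_i^n - D_i^n)(u_{i+1}^n - u_i^n) + C_{i-1}^n(u_i^n - u_{i-1}^n) + D_{i+1}^n(u_{i+2}^n - u_{i+1}^n).
\end{equation*}
The three hypotheses of the theorem are precisely what is needed to ensure that each of the three prefactors $1 - C_i^n - D_i^n$, $C_{i-1}^n$, $D_{i+1}^n$ is nonnegative, so that the triangle inequality yields
\begin{equation*}
|u_{i+1}^{n+1} - u_i^{n+1}| \le (1 - C_i^n - D_i^n)|u_{i+1}^n - u_i^n| + C_{i-1}^n|u_i^n - u_{i-1}^n| + D_{i+1}^n|u_{i+2}^n - u_{i+1}^n|.
\end{equation*}

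Next I would sum this inequality over $i$. The essential observation is that after shifting the summation index ($i \mapsto i+1$ in the $C$-sum and $i \mapsto i-1$ in the $D$-sum), all three resulting sums reduce to sums over the common jumps $|u_{i+1}^n - u_i^n|$, with coefficients $(1 - C_i^n - D_i^n) + C_i^n + D_i^n = 1$. Thus $TV(u^{n+1}) \le \sum_i |u_{i+1}^n - u_i^n| = TV(u^n)$, which is the definition of TVD.

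The main potential obstacle is purely bookkeeping: one must be careful with index shifts at the boundary of the summation, and justify that the shifts do not introduce extra terms. For the periodic or infinite-domain setting assumed in the analysis this is automatic; for finite domains one needs to check that the reindexing is compatible with the boundary treatment used in the scheme. No other nontrivial step is required — the nonnegativity of the coefficients, supplied exactly by \eqref{eq:sec_math_harten_cond}, converts an algebraic identity into a sharp inequality and that inequality is the theorem.
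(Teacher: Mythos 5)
Your proposal is correct and is precisely the classical Harten argument that the paper invokes by reference (it does not reproduce the proof, only notes that the original proof carries over to non-uniform grids because the discrete total variation is independent of cell sizes). Your difference identity, the nonnegativity of the three coefficients under the stated conditions, and the telescoping reindexing are all exactly right, and your remark about boundary/periodic bookkeeping is the only caveat the paper implicitly relies on as well.
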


Uniform grids are assumed in the original proof; however, this theorem can be applied to non-uniform meshes naturally, by observing that the definition of discrete total variance (Eq.~(\ref{eq:sec_math_dtv})) is independent of the particular cell sizes. 
\begin{equation}\label{eq:sec_math_dtv}
\textrm{TV}(\{u_i\})\eqdef\sum_i\left|u_i-u_{i+1}\right|
\end{equation}
thus the result in Theorem~\ref{thm:sec_math_harten} carries directly to any 1D grids.

In the subsequent discussion, the upwind cell index of $\Omega_i$ is denoted by $i'$, namely $i'=i+1$ if $c<0$ and $i'=i-1$ if $c>0$.
Then the exact solution to Eq.~(\ref{eq:sec_math_1dadv}) obtained from the REP approach is
\begin{equation}\label{eq:sec_math_sol}
u_i^{n+1} = u_i^n - \lambda_i(u_i^n-u_{i'}^n) - \oname{sgn}(c)\frac{1}{2}\lambda_i\left[(1-\lambda_i)\Delta x_i\sigma_i^n - (1-\lambda_{i'})\Delta x_{i'}\sigma_{i'}^n\right]
\end{equation}
Here $\lambda_i=|c\Delta t^n|/\Delta x_i$ is the absolute value of the local Courant number, and $\oname{sgn}(c)$ is $+1$ if $c\ge0$ and $-1$ if $c<0$.
Assuming the smoothness monitor~(\ref{eq:sec_case_monitor}) and the approximated gradients~(\ref{eq:sec_case_slpe}), the limited slopes are 
\begin{equation}\label{eq:sec_math_slpe}
\sigma_i^n = \frac{1}{\Delta x_i}\phi_i(\theta_i)(u_{i+1}^n-u_i^n)
\end{equation}
Note that the limiter function $\phi$ is equipped with subscript $i$ to allow variance among cells. 
Plugging (\ref{eq:sec_math_slpe}) into (\ref{eq:sec_math_sol}), one may choose the parameters $C_i^n$ and $D_i^n$ in Theorem~\ref{thm:sec_math_harten} as 
\begin{equation}\label{eq:sec_math_c_d_rght}
C_{i-1}^n = \lambda_i+\frac{1}{2}\lambda_i(1-\lambda_i)\frac{\phi_i(\theta_i)}{\theta_i}-\frac{1}{2}\lambda_i(1-\lambda_{i-1})\phi_{i-1}(\theta_{i-1}),\quad
D_i^n = 0
\end{equation}
if $c>0$, and
\begin{equation}\label{eq:sec_math_c_d_left}
C_{i-1}^n = 0,\quad
D_i^n = \lambda_i+\frac{1}{2}\lambda_i(1-\lambda_i)\phi_i(\theta_i)-\frac{1}{2}\lambda_i(1-\lambda_{i+1})\frac{\phi_{i+1}(\theta_{i+1})}{\theta_{i+1}}
\end{equation}
if $c<0$. 
Then the theorem leads to the conclusion that the REP approach is TVD stable given that 
\begin{displaymath}
0\le C_{i-1}^n\le1,\ \textrm{if }c>0;\quad\textrm{ and }\quad0\le D_{i}^n\le1,\ \textrm{if }c<0
\end{displaymath}
These conditions hold whenever the Courant numbers are subject to the usual condition $0\le\lambda_i\le1$ and
\begin{equation}\label{eq:sec_math_tvd}
0\le\phi_i(\theta)\le2,\quad 0\le\frac{\phi_i(\theta)}{\theta}\le2,\quad\forall i,\theta
\end{equation}
For example, in the case $c>0$ and assuming (\ref{eq:sec_math_tvd}), one has
\begin{displaymath}
C_{i-1}^n\ge\lambda_i+\frac{1}{2}\lambda_i(1-\lambda_i)\cdot 0 - \frac{1}{2}\lambda_i(1-\lambda_{i-1})\cdot 2 = \lambda_i\lambda_{i-1}\ge0
\end{displaymath}
and
\begin{displaymath}
C_{i-1}^n\le\lambda_i+\frac{1}{2}\lambda_i(1-\lambda_i)\cdot 2 - \frac{1}{2}\lambda_i(1-\lambda_{i-1})\cdot 0 = 2\lambda_i - \lambda_i^2\le1
\end{displaymath}
The proof of $0\le D_i^n\le 1$ in the case $c<0$ is similar.

Hence (\ref{eq:sec_math_tvd}) is a sufficient condition for the limiter functions $\phi_i$ to lead to TVD stability on non-uniform grids.
This condition is the same as (\ref{eq:sec_case_tvd}), which explains why TVD stability is retained even if the conventional slope limiters are applied with non-uniform meshes as observed in Section~\ref{sec:case}.

\subsection{Second-order spatial accuracy}
Introducing the numerical fluxes
\begin{equation}\label{eq:sec_math_flux}
F_{i+1/2}^n = cu_{i''}^n + \frac{1}{2}\abs{c}(1-\lambda_{i''})\Delta x_{i''}\sigma_{i''}^n
\end{equation}
where $i''$ is the upwind cell index corresponding to a cell face $x_{i+1/2}$, namely $i''=i+1$ if $c<0$ and $i''=i$ if $c>0$, 
the exact solutions~(\ref{eq:sec_math_sol}) is rewritten as
\begin{displaymath}
\frac{1}{\Delta t^n}\left(u_i^{n+1}-u_i^n\right) + 
\frac{1}{\Delta x_i}\left(F_{i+1/2}^n-F_{i-1/2}^n\right) = 0
\end{displaymath}
Plugging the limited slopes (\ref{eq:sec_math_slpe}) into Eq.~(\ref{eq:sec_math_flux}) leads to
\begin{equation}\label{eq:sec_math_flux_lform}
F_{i+1/2}^n = cu_{i''}^n+\frac{\phi_{i''}(\theta_{i''})}{B_{i+1/2}}\left(cu_{i''}^n+\frac{1}{2}B_{i+1/2}\abs{c}(1-\lambda_{i''})(u_{i''+1}^n-u_{i''}^n)-cu_{i''}^n\right)
\end{equation}
Here $B_{i+1/2}$ are coefficients to be determined later.

In order to express these fluxes in the flux-corrected form (\ref{eq:sec_case_fc}), the low-order flux $F^L$ is chosen to be the first-order upwind flux
\begin{equation}\label{eq:sec_math_flux_l}
F_{i+1/2}^{L,n} = cu_{i''}^n
\end{equation}
and the ``second-order'' fluxes 
\begin{equation}\label{eq:sec_math_flux_h}
F_{i+1/2}^{H,n} = 
cu_{i''}^n+\frac{1}{2}B_{i+1/2}\abs{c}\left(1-\frac{\abs{c}\Delta t^n}{\Delta x_{i''}}\right)(u_{i''+1}^n-u_{i''}^n) 
\end{equation}
In order that $F_{i+1/2}^{H,n}$ is indeed second-order, $B_{i+1/2}$ needs to be chosen carefully.
Applying Taylor series expansion in time to the right hand side of (\ref{eq:sec_case_num_flux})
\begin{align}
\notag
  \frac{1}{\Delta t^n}\int_{t^n}^{t^{n+1}}cu(x_{i+1/2},t)dt
&= \frac{1}{\Delta t^n}\int_{t^n}^{t^{n+1}}c\left[u^*+(t-t^n)u_t^*\right]dt+O((\Delta t^n)^2)\\
\label{eq:sec_math_time_ave}
&= cu^*+\frac{1}{2}c\Delta t^nu_x^*+O((\Delta t^n)^2)
\end{align}
Here the superscript $*$ indicates that the values are evaluated at $(x_{i+1/2},t^n)$.
Feeding the exact data at $t^n$ to (\ref{eq:sec_math_flux_h}) and noticing that $\bar{u}(x_j,t) = u(x_j,t)+O(\Delta x_j^2), j=i'', i''+1$, one obtains
\begin{align}
\notag
F_{i+1/2}^{H,n} =& cu^*+\frac{1}{2}\abs{c}u^*_x\left[\frac{1}{2}B_{i+1/2}\left(1-\frac{\abs{c}\Delta t^n}{\Delta x_{i''}}\right)(\Delta x_{i''}+\Delta x_{i''+1})-\Delta x_{i''}\right] \\
\label{eq:sec_math_flux_trunc}
&+O(\Delta x_{i''}^2+\Delta x_{i''+1}^2)
\end{align}
Comparing (\ref{eq:sec_math_flux_trunc}) and (\ref{eq:sec_math_time_ave}) and using $u_t^*+cu_x^*=0$, a proper value for $B_{i+1/2}$ is
\begin{equation}\label{eq:sec_math_b}
B_{i+1/2} = \frac{2\Delta x_{i''}}{\Delta x_{i''}+\Delta x_{i''+1}}
\end{equation}
By comparing (\ref{eq:sec_case_fc}) and (\ref{eq:sec_math_flux_lform}), (\ref{eq:sec_math_b}), one has the following relation between the flux limiter and slope limiter
\begin{equation}\label{eq:sec_math_fluxlim}
\varphi_{i+1/2} = \frac{\Delta x_{i''}+\Delta x_{i''+1}}{2\Delta x_{i''}}\phi_{i''}(\theta_{i''})
\end{equation}
On a uniform mesh, (\ref{eq:sec_math_fluxlim}) indicates that the flux limiters and slope limiters are equivalent to each other, which is, however, not true if the mesh is non-uniform.
A sufficient condition for second-order accuracy in space is the linear preserving property, namely $\varphi_{i+1/2}=1$ for linear data, in which case
\begin{displaymath}
\theta_i = \frac{u_i-u_{i-1}}{u_{i+1}-u_i} = \frac{\Delta x_{i-1}+\Delta x_i}{\Delta x_i+\Delta x_{i+1}}
\end{displaymath}
Thus a sufficient condition for second-order spatial accuracy is 
\begin{equation}\label{eq:sec_math_2nd}
\phi_i\left(\frac{\Delta x_{i-1}+\Delta x_i}{\Delta x_i+\Delta x_{i+1}}\right) = \frac{2\Delta x_i}{\Delta x_i+\Delta x_{i+1}},\quad\forall i
\end{equation}
It follows that $\phi_i(\cdot)$ must be defined locally, such that local grid sizes are taken into account, for non-uniform grids.
As a last note, the condition~(\ref{eq:sec_math_2nd}) is always compatible with the TVD stability condition~(\ref{eq:sec_math_tvd}) when $\Delta x_i>0,\ \forall i$. 

\subsection{The Sweby's diagram}
The Sweby's diagram~\cite{PKSweby:1984a} for limiter functions that satisfy both (\ref{eq:sec_math_tvd}) and (\ref{eq:sec_math_2nd}) is revisited to take non-uniform grids into account.
In the view of Eq.~(\ref{eq:sec_math_2nd}), one may characterize the limiter function $\phi_{A,B}(\cdot)$ by two parameters $A$ and $B$ that satisfy
\begin{equation}\label{eq:sec_math_a_b}
0<B<\min(2,2A)
\end{equation}
and
\begin{equation}\label{eq:sec_math_lim_a_b}
\phi_{A,B}(A)=B
\end{equation}
Using this notation, the conventional limiters are written as $\phi(\cdot) = \phi_{1,1}(\cdot)$, and the corresponding Sweby's diagram is shown in Figure~\ref{fg:sec_math_sweby_conv}. 
In this diagram, the admissible region of the limiter function is shaded.
This region is bounded by four straight edges: (1) $\phi(\theta)=2$, (2) $\phi(\theta)=2\theta$, (3) $\phi(\theta) = 1$, and (4) $\phi(\theta)=\theta$.
\begin{figure}\centering
\begin{subfigure}[b]{.48\textwidth}\centering
    \includegraphics[trim=0.5in 0.2in 0.5in 0.5in, clip, width=\textwidth]{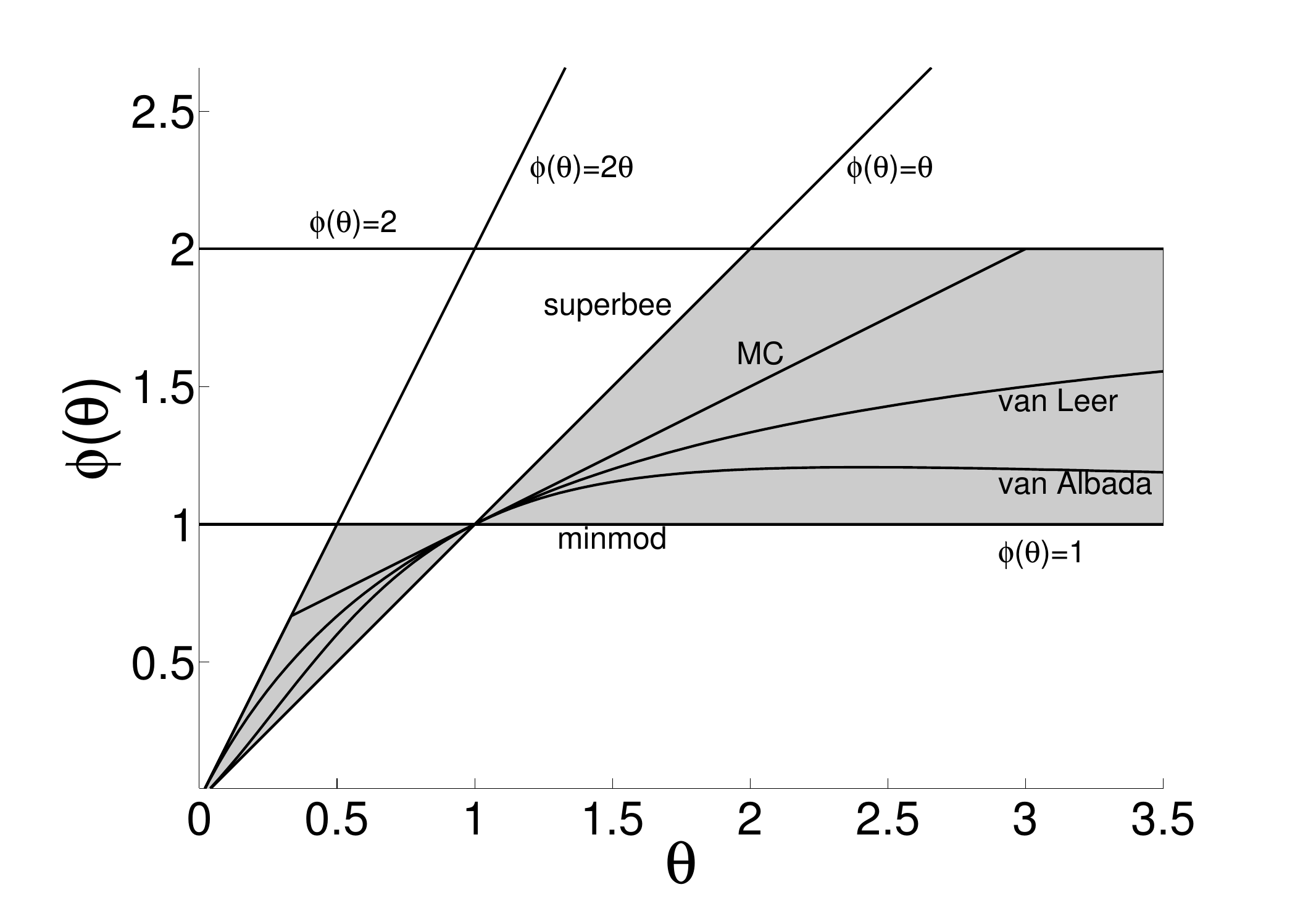}
    \vglue -0.05 truein
    \caption{Conventional limiters}
    \label{fg:sec_math_sweby_conv}
\end{subfigure}
\begin{subfigure}[b]{.48\textwidth}\centering
    \includegraphics[trim=0.5in 0.2in 0.5in 0.5in, clip, width=\textwidth]{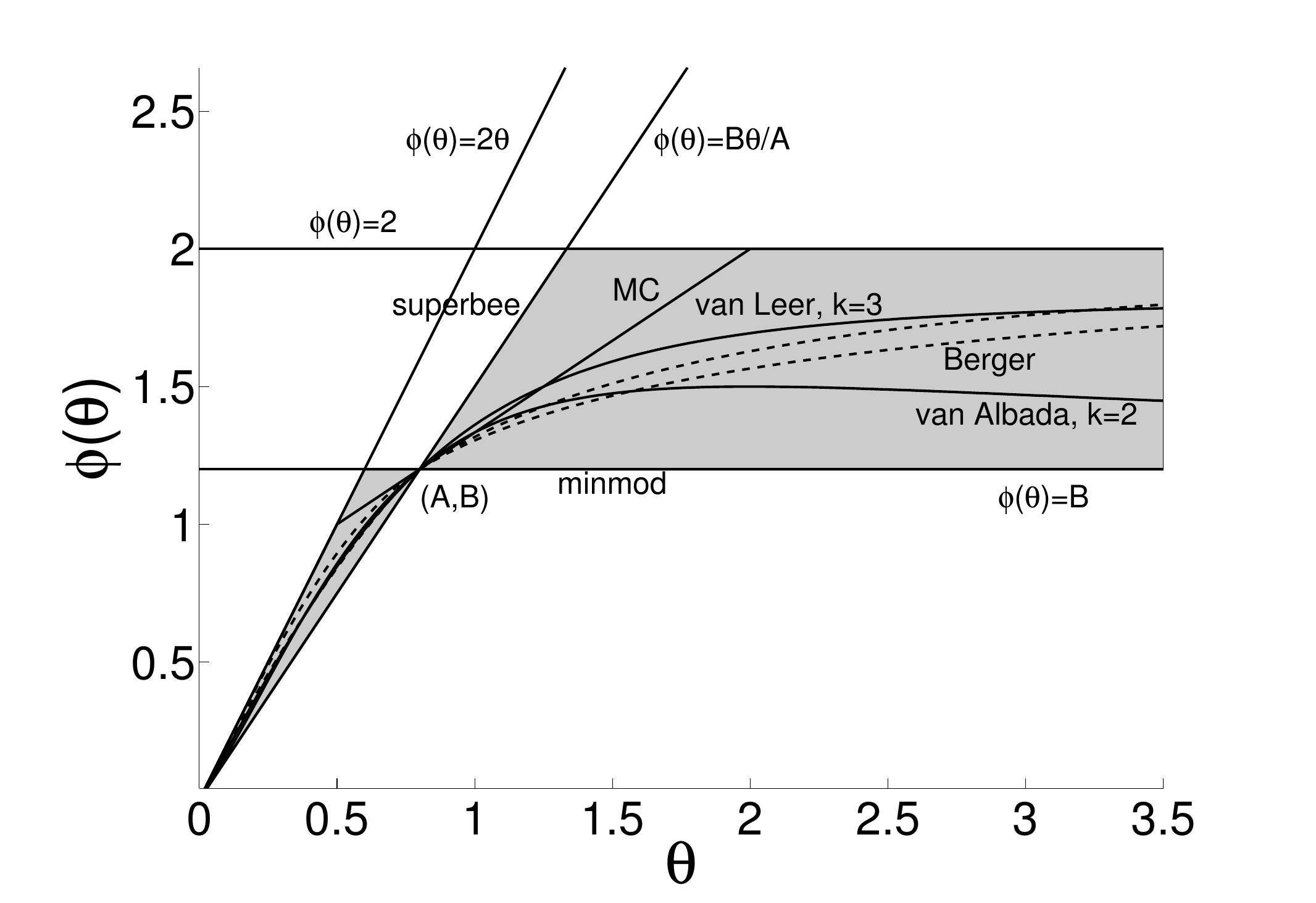}
    \vglue -0.05 truein
    \caption{Enhanced limiters}
    \label{fg:sec_math_sweby_modf}
\end{subfigure}
\vglue -0.2 truein
\caption{Sweby's diagrams for limiter functions: (\ref{fg:sec_math_sweby_conv}) conventional limiters $\phi(\cdot)=\phi_{1,1}(\cdot)$, (\ref{fg:sec_math_sweby_modf}) enhanced limiters $\phi_{A,B}(\cdot)$}
\label{fg:sec_math_sweby}
\end{figure}
The first two edges correspond to the conventional TVD stability conditions~(\ref{eq:sec_case_tvd}); 
and the other two edges lead to the Lax-Wendroff method~\cite{PLax:1960a} and the Beam-Warming method~\cite{RFWarming:1976a}, which correspond to applying using the following ``slope limiters'' on uniform meshes
\begin{displaymath}
\phi^{LW}(\theta) = 
\left\{\begin{array}{lcc}
1 & & c>0 \\
\\
\theta & & c<0
\end{array}\right.,\qquad
\phi^{BW}(\theta) = 
\left\{\begin{array}{lcc}
\theta & & c>0 \\
\\
1 & & c<0
\end{array}\right.
\end{displaymath}
Thus one may interpret the Sweby's diagram such that the limiter function should be a convex combination of $\phi^{LW}$ and $\phi^{BW}$, in addition to satisfying~(\ref{eq:sec_case_tvd}) and (\ref{eq:sec_case_2nd}).

This interpretation may be extended to the limiter function $\phi_{A,B}$ as follows.
First, the limiter function $\phi^{LW}$ and $\phi^{BW}$ are generalized to satisfy~(\ref{eq:sec_math_lim_a_b})
\begin{displaymath}
\phi^{LW}_{A,B}(\theta) = 
\left\{\begin{array}{lcc}
B & & c>0 \\
\\
\frac{B\theta}{A} & & c<0
\end{array}\right.,\quad
\phi^{BW}(\theta) = 
\left\{\begin{array}{lcc}
\frac{B\theta}{A} & & c>0 \\
\\
B & & c<0 
\end{array}\right.
\end{displaymath}
Next, the shaded admissible region is surrounded by the four edges: (1) $\phi_{A,B}(\theta)=2$, (2) $\phi_{A,B}(\theta)=2\theta$, (3) $\phi_{A,B}(\theta)=B$, and (4) $\phi_{A,B}(\theta)=B\theta/A$ (Figure~\ref{fg:sec_math_sweby_modf}).

{\bf Remark:} by introducing $A$ and $B$ that satisfy~(\ref{eq:sec_math_a_b}), one generalizes one particular limiter function, such as $\phi^{\it \textrm{minmod}}$, to a class of limiter functions $\phi_{A,B}^{\it \textrm{minmod}}$ (see Section~\ref{sec:limiter}).
This convention is used throughout the remainder of the paper.

\subsection{Preserving symmetric solutions}
Another desired property of slope limiters is to preserving symmetric solutions.
This property, however, need to be elaborated for non-uniform meshes first.
In particular, it is defined for {\it a class of} limiter functions $\phi_{A,B}(\cdot)$ rather than a single one.
\begin{definition}[Symmetry-preserving]\label{def:sec_math_sym}
A class of limiter functions $\phi_{A,B}(\cdot)$ preserves symmetric solutions, if the next two problems lead to $u_i^{n+1} = v_i^{n+1}, \forall i$.
{\small
\begin{enumerate}
\item[] Problem 1: given the grid with cell sizes $\Delta x_{i+k}^u$ and corresponding data at $t^n$: $u_{i+k}^n,\ k=0,\pm1,\cdots$, solving $u_t+cu_x = 0$ for $u_i^{n+1}$ using the limiter functions $\phi_{A,B}(\cdot)$ such that 
\begin{displaymath}
\phi_{j}^u(\cdot) = \phi_{A^u_{j},B^u_{j}}(\cdot),\quad
A^u_{j}=\frac{\Delta x_{j-1}^u+\Delta x_j^u}{\Delta x_j^u+\Delta x_{j+1}^u},\ 
B^u_{j}=\frac{2\Delta x_j^u}{\Delta x_j^u+\Delta x_{j+1}^u}
\end{displaymath}
\item[] Problem 2: given the grid with cell sizes $\Delta x_{i+k}^v$ and corresponding data at $t^n$: $v_{i+k}^n,\ k=0,\pm1,\cdots$, such that 
\begin{equation}\label{eq:sec_math_sym_data}
\Delta x_{i+k}^v=\Delta x_{i-k}^u,\qquad v_{i+k}^n=u_{i-k}^n
\end{equation}
 solving $v_t-cv_x = 0$ for $v_i^{n+1}$ using the limiter functions $\phi_{A,B}(\cdot)$ satisfying 
\begin{displaymath}
\phi_{j}^v(\cdot) = \phi_{A^v_{j},B^v_{j}}(\cdot),\quad
A^v_{j}=\frac{\Delta x_{j-1}^v+\Delta x_j^v}{\Delta x_j^v+\Delta x_{j+1}^v},\ 
B^v_{j}=\frac{2\Delta x_j^v}{\Delta x_j^v+\Delta x_{j+1}^v}
\end{displaymath}
\end{enumerate}
}
\end{definition}
Here the superscript $u$ or $v$ designates one of the two problems to which a particular quantity is relevant.
Figure~\ref{fg:sec_math_sym_nunf} demonstrates this definition in the case $c>0$.
\begin{figure}\centering
\begin{subfigure}[b]{0.48\textwidth}\centering
    \includegraphics[trim=0.5in 0.5in 0.5in 0.1in, clip, width=\textwidth]{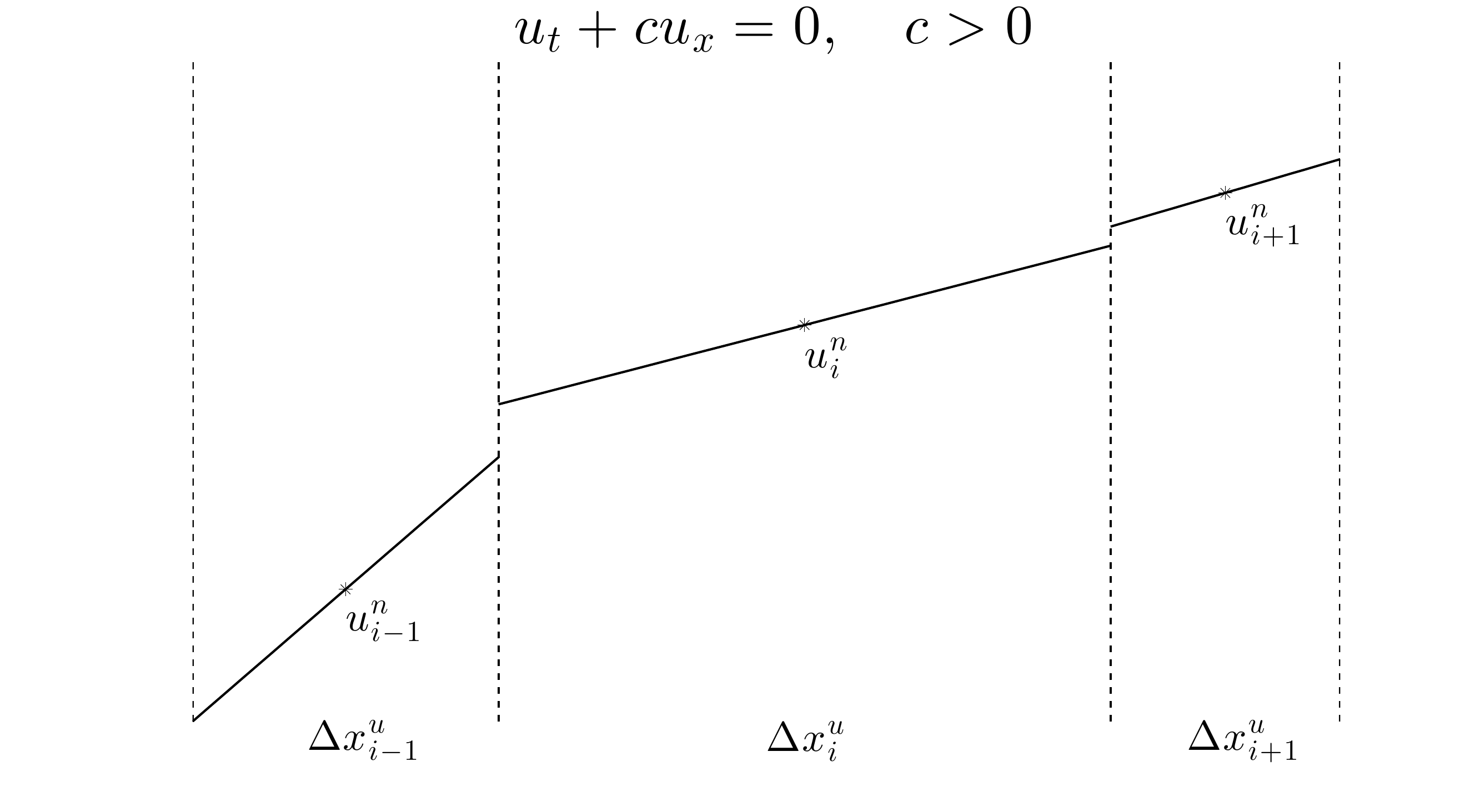}
    \vglue -0.05 truein
    \caption{Problem 1}
    \label{fg:sec_math_sym_nunf_u}
\end{subfigure}
\begin{subfigure}[b]{0.48\textwidth}\centering
    \includegraphics[trim=0.5in 0.5in 0.5in 0.1in, clip, width=\textwidth]{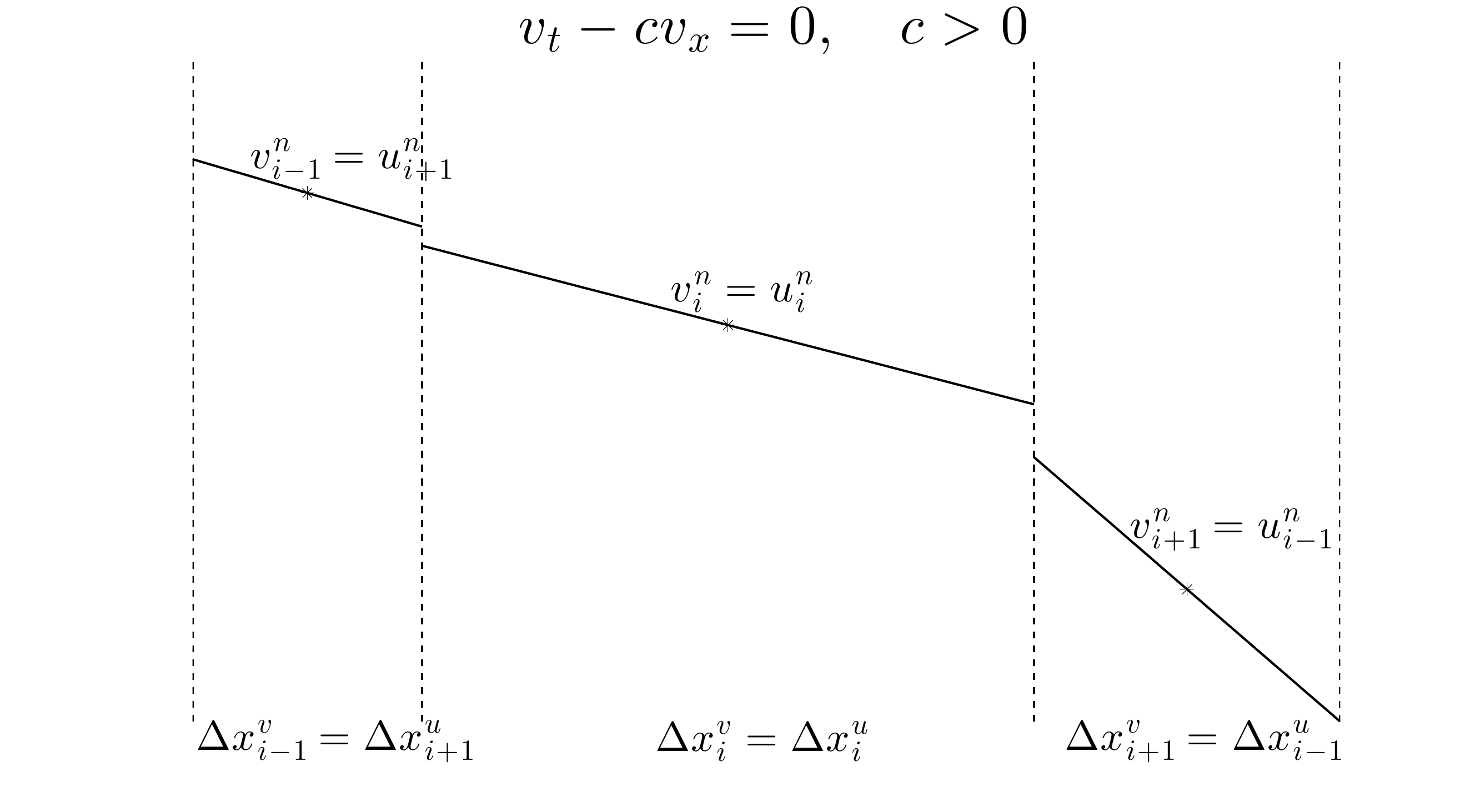}
    \vglue -0.05 truein
    \caption{Problem 2}
    \label{fg:sec_math_sym_nunf_v}
\end{subfigure}
\vglue -0.2 truein
\caption{Symmetry-preserving property on non-uniform meshes: (\ref{fg:sec_math_sym_nunf_u}) problem 1, and (\ref{fg:sec_math_sym_nunf_v}) problem 2}
\label{fg:sec_math_sym_nunf}
\end{figure}
Supposing $c>0$, $u_i^{n+1}=v_i^{n+1}$ is equivalent to
\begin{align}
\notag
 &u_i^n-\lambda_i^u(u_i^n-u_{i-1}^n)-\\
\label{eq:sec_math_sym_eqn}
 &\frac{1}{2}\lambda_i^u\left[(1-\lambda_i^u)\phi_i^u(\theta_i^u)(u_{i+1}^n-u_i^n)-(1-\lambda_{i-1}^u)\phi_{i-1}^u(\theta_{i-1}^u)(u_i^n-u_{i-1}^n)\right]\\
\notag
=&u_i^n-\lambda_i^u(u_i^n-u_{i-1}^n)-\\
\notag
 &\frac{1}{2}\lambda_i^u\left[(1-\lambda_i^u)\phi_i^v\left(\frac{1}{\theta_i^u}\right)\theta_i^u(u_{i+1}^n-u_{i}^n)-(1-\lambda_{i-1}^u)\phi_{i+1}^v\left(\frac{1}{\theta_{i-1}^u}\right)\theta_{i-1}^u(u_{i}^n-u_{i-1}^n)\right]
\end{align}
Here the following identities are utilized
\begin{displaymath}
\textrm{(\ref{eq:sec_math_sym_data})}\quad
\Rightarrow\quad
\lambda_{i+k}^v = \lambda_{i-k}^u,\quad 
\theta_{i+k}^v = \frac{1}{\theta_{i-k}^u},\quad 
A_{i+k}^v = \frac{1}{A_{i-k}^u},\quad
B_{i+k}^v = \frac{B_{i-k}^u}{A_{i-k}^u}
\end{displaymath}

Clearly, a sufficient condition for Eq.~(\ref{eq:sec_math_sym_eqn}) to hold is 
\begin{displaymath}
\frac{\phi_{i-k}^u(\theta)}{\theta} = \phi_{i+k}^v\left(\frac{1}{\theta}\right), k=0,1
\end{displaymath}
which is equivalent to saying
\begin{equation}\label{eq:sec_math_sym_cond}
\frac{\phi_{A^u_{i-k},B^u_{i-k}}(\theta)}{\theta} =
\phi_{A^v_{i+k},B^v_{i+k}}\left(\frac{1}{\theta}\right) =
\phi_{1/A^u_{i-k},B^u_{i-k}/A^u_{i-k}}\left(\frac{1}{\theta}\right)
\end{equation}
\begin{definition}[Conjugate limiter function]\label{def:sec_math_conjg}
Given a class of limiter functions $\phi_{A,B}$ characterized by the two parameters $A$ and $B$ as before, the conjugate class of limiter functions $\phi^*_{A,B}$ is defined by
\begin{equation}\label{eq:sec_math_sym_conjg}
\phi^*_{A,B} = \phi_{1/A,B/A}
\end{equation}
\end{definition}
It is easy to verify that $\phi^*$ is well-defined, in the sense that:(1) for any $(A,B)$ satisfying (\ref{eq:sec_math_a_b}), the pair $(1/A,B/A)$ does so as well, (2) $\phi^{**}_{A,B} = \phi_{A,B}$ for any pair $(A,B)$.
In the view of Eq.~(\ref{eq:sec_math_sym_cond}), a sufficient condition for the symmetry-preserving property is 
\begin{equation}\label{eq:sec_math_sym}
\frac{\phi_{A,B}(\theta)}{\theta} = \phi^*_{A,B}\left(\frac{1}{\theta}\right)
\end{equation}
In the case of conventional limiter functions $\phi=\phi_{1,1}$, there is $\phi^*=\phi$, and Eq.~(\ref{eq:sec_math_sym}) reduces to the classical symmetry-preserving condition, namely $\phi(\theta)/\theta=\phi(1/\theta)$.

{\bf Remark 1:} M. Berger~\cite{MBerger:2005a} proposes an alternative theory to study the symmetry-preserving property, which writes the limiters as functions of a symmetry variable instead of the classical smooth monitors.
This strategy is not explored here.

{\bf Remark 2:} as a final remark of this section, although the analysis remains similar if alternative slopes and smoothness monitors are used, the conclusions may be different.
For example, one may use an alternative strategy to apply the conventional limiters on irregular meshes without destroying the formal second-order accuracy, but losing the TVD stability, as will be discussed in more detail in Section~\ref{sec:alt}.

\section{Enhanced limiter functions}
\label{sec:limiter}
This section proposes enhancements to the limiters listed in Table~\ref{tb:sec_case_lim} such that the improved limiters satisfy (\ref{eq:sec_math_tvd}), (\ref{eq:sec_math_2nd}), and (\ref{eq:sec_math_sym}).
In particular, the enhanced version of each limiter function $\phi^{\it \textrm{name}}$ is named $\phi^{\it \textrm{name}}_{A,B}$.
Examples of the enhanced limiters are plotted in Figure~\ref{fg:sec_math_sweby_modf}, among which the three piecewise linear limiters are constructed naturally, as listed in Table~\ref{tb:sec_limiter_lin}.
\begin{table}\centering
\caption{Enhanced limiter functions $\phi^{\it \textrm{minmod}}_{A,B}, \phi^{\it \textrm{superbee}}_{A,B}$, and $\phi^{\it \textrm{MC}}_{A,B}$}
\label{tb:sec_limiter_lin}
{\small
\begin{tabular}{|c||c|c|c|}
\hline
Enhanced Limiter        
& minmod                   
& superbee                                 
& MC \\ \hline
$\phi_{A,B}(\theta)$ 
& $\frac{B}{A}\min(\theta,A)^+$ 
& $\max(\min(2\theta,B),\min(\frac{B\theta}{A},2))^+$
& $\min(2\theta,\frac{B}{A+1}(\theta+1),2)^+$ \\ \hline
\end{tabular} 
}
\end{table}
Verifying that these enhanced limiters satisfy the three conditions is straight forward.

Constructing the enhanced {\it van Leer} and {\it van Albada} limiters is more elaborated, as described below.

\subsection{Enhanced {\it \textbf{van Leer}} limiter}
The reference~\cite{MBerger:2005a} proposes two generalized {\it van Leer} limiters using symmetry variables, which may be rewritten as
\begin{equation}\label{eq:sec_lim_berger_1}
\phi^{\it \textrm{Berger-1}}_{A,B}(\theta) = \left\{\begin{array}{lcl}
2\theta\left[1-\left(1-\frac{B}{2A}\right)\left[\frac{\theta}{\theta+1}\cdot\frac{A+1}{A}\right]^{\frac{B}{2A-B}}\right] & & \theta\le A \\
		\\
2\left[1-\left(1-\frac{B}{2}\right)\left[\frac{A+1}{\theta+1}\right]^{\frac{B}{2-B}}\right] & & \theta> A
\end{array}\right.
\end{equation}
\begin{equation}\label{eq:sec_lim_berger_2}
\phi^{\it \textrm{Berger-2}}_{A,B}(\theta) = \left\{\begin{array}{lcl}
\frac{B(\theta+1)}{A+1}\left[1-\left[1-\frac{\theta}{\theta+1}\cdot\frac{A+1}{A}\right]^{\frac{2A}{B}}\right] & & \theta\le A \\
\\
\frac{B(\theta+1)}{A+1}\left[1-\left[1-\frac{A+1}{\theta+1}\right]^{\frac{2}{B}}\right] & & \theta> A \\
\end{array}\right.
\end{equation}
Both limiters reduce to the conventional one in the case $A=B=1$; 
and both functions satisfy the TVD stability condition, order condition and symmetry-preserving condition presented in this paper.
They are plotted in Figure~\ref{fg:sec_math_sweby_modf} by dotted lines.

A drawback of these two modifications is that they are not smooth at the point $\theta=A$; 
on the contrary, a smooth enhancement $\phi^{\it \textrm{van Leer}}_{A,B}$ is presented below.

Noticing that $\lim_{k\to\infty}\sum_{l=1}^kA^l/\sum_{l=0}^kA^l = \min(1,A)$, Eq.~(\ref{eq:sec_math_a_b}) suggests the existence of an integer $k>0$ such that
\begin{equation}\label{eq:sec_lim_vanleer_k}
B \le \frac{2\sum_{l=1}^kA^l}{\sum_{l=0}^kA^l}
\end{equation}
To this end, the enhanced {\it van Leer} limiter is defined as
\begin{equation}\label{eq:sec_lim_vanleer}
\phi^{\it \textrm{van Leer}}_{A,B}(\theta) = 
\frac{B\sum_{l=1}^k\theta^l}{\sum_{l=0}^k\theta^l}\cdot
\frac{\sum_{l=0}^kA^l}{\sum_{l=1}^kA^l}\quad\textrm{if}\quad\theta\ge0,\quad\textrm{and}\quad0\quad\textrm{o.w.}
\end{equation}
Clearly $\phi_{A,B}^{\it \textrm{van Leer}}(A)=B$; and it is easy to verify the TVD stability condition as follows: given $\theta\ge0$ and using (\ref{eq:sec_lim_vanleer_k}) 
\begin{displaymath}
\phi_{A,B}^{\it \textrm{van Leer}}\le\frac{2\sum_{l=1}^k\theta^l}{\sum_{l=0}^k\theta^l}<2\min(1,\theta)
\end{displaymath}
Verifying the symmetry-preserving condition~(\ref{eq:sec_math_sym}) is straight forward and omitted.

A sample curve calculated with $k=3$ is plotted in Figure~\ref{fg:sec_math_sweby_modf}. 
And clearly, if $A=B=1$, Eq.~(\ref{eq:sec_lim_vanleer_k}) is satisfied by $k=1$, in which case the enhanced limiter Eq.~(\ref{eq:sec_lim_vanleer}) reduces to the conventional {\it van Leer} limiter.

{\bf Remark:}
This particular limiter can be used as a good example to show that, the issue raised in this paper cannot always be resolved by choosing a different smoothness monitor and at the same time applying the conventional limiter function. 
For example, see next section and Appendix~\ref{app:vanleer}.

\subsection{Enhanced {\it \textbf{van Albada}} limiter}
The enhanced {\it van Albada} limiter proposed here is a smooth function of the smoothness monitor, and it is based on the inequality $\lim_{k\to\infty}(k-1)^{k-1}/k^k=0$.
Thus given $A$ and $B$ satisfying (\ref{eq:sec_math_a_b}), there always exists $k$ such that
\begin{equation}\label{eq:sec_lim_vanalbada_k}
B \le 2\left(1+\frac{(k-1)^{k-1}}{k^k}\right)^{-1}\min(A,1)
\end{equation}
Then the limiter function is defined as
\begin{equation}\label{eq:sec_lim_vanalbada}
\phi_{A,B}^{\it \textrm{van Albada}}(\theta) = \frac{B(\theta^k+\theta)}{\theta^k+A}
\end{equation}
Verifying the conditions~(\ref{eq:sec_math_2nd}) and (\ref{eq:sec_math_sym}) is straight forward.
Now considering the TVD stability condition~(\ref{eq:sec_math_tvd}), the following inequalities are utilized
\begin{displaymath}
\theta 
\le 
k\left(\frac{2}{B}-1\right)^{\frac{1}{k}}\left(\frac{2A/B}{k-1}\right)^{\frac{k-1}{k}}\theta
\le
\left(\frac{2}{B}-1\right)\theta^k+\frac{(k-1)\cdot2A}{(k-1)B}
= \left(\frac{2}{B}-1\right)\theta^k+\frac{2A}{B}
\end{displaymath}
and similarly
\begin{displaymath}
\frac{1}{\theta}\le 
k\left(\frac{2A}{B}-1\right)^{\frac{1}{k}}\left(\frac{2/B}{k-1}\right)^{\frac{k-1}{k}}\frac{1}{\theta}
\le
\left(\frac{2A}{B}-1\right)\frac{1}{\theta^k}+\frac{(k-1)\cdot2}{(k-1)B}
= \left(\frac{2A}{B}-1\right)\frac{1}{\theta^k}+\frac{2}{B}
\end{displaymath}
An example with $k=2$ is plotted in Figure~\ref{fg:sec_math_sweby_modf}.
Furthermore, the enhanced limiter Eq.~(\ref{eq:sec_lim_vanalbada}) coincides with the conventional one by setting $k=1$ and $A=B=1$.

{\bf Remark:} in practice, evaluating the right hand side of Eq.~(\ref{eq:sec_lim_vanalbada_k}) can be expensive.
An economical version may be derived using the fact that $(k-1)^{k-1}/k^k \le 1/k$, thus one may choose $k$ such that
\begin{displaymath}
B\le2\left(1+\frac{1}{k}\right)^{-1}\min(1,A)
\end{displaymath}
This approach is adopted in all the numerical examples in this paper.

\section{1D examples and comparison with alternative strategies}
\label{sec:alt}
The analysis so far supposes the particular numerical slope~(\ref{eq:sec_case_slpe}) and the smoothness monitor~(\ref{eq:sec_case_monitor}).
In practice, one may choose different strategies to handle non-uniform meshes with conventional limiters and still observe reasonably good results.
This section discusses two such alternatives, namely MUSCL-MOL using consistent numerical slope and the capacity-form differencing~\cite{RJLeVeque:2002a}, and compares them with the proposed method using one-dimensional examples.

\subsection{MUSCL-MOL using consistent slopes}
\label{sec:alt_consist}
One way to explain why using (\ref{eq:sec_case_slpe}) with conventional limiters leads to the loss of second-order accuracy is that when the mesh is highly non-uniform, the unlimited $\sigma_i$ is not a consistent approximation to the solution's slope, that is
\begin{displaymath}
D_xu_i = \frac{1}{\Delta x_i}(u_{i+1}-u_i)\approx \frac{\Delta x_i+\Delta x_{i+1}}{2\Delta x_i}u_x|_{x=x_i}+O(h)
\end{displaymath}
here $h$ is the reference cell size of the mesh.
Based on this observation, an obvious alternative to (\ref{eq:sec_case_slpe}) is to use a consistent numerical slope, such as
\begin{equation}\label{eq:alt_consist_slpe}
D_xu_i = \frac{u_{i+1}-u_i}{(\Delta x_i+\Delta x_{i+1})/2}
\end{equation}
The MUSCL equipped with this slope and conventional limiters is second-order accurate in space for smooth problems.
In fact, using the piecewise linear limiters $\phi^{\it \textrm{minmod}}$, $\phi^{\it \textrm{superbee}}$, and $\phi^{\it \textrm{MC}}$, the resulting method is exactly the same as the one obtained by using (\ref{eq:sec_case_slpe}) and the enhanced limiters in Section~\ref{sec:limiter} for linear advection equations.

However, the two strategies are in general different if nonlinear limiters are used.
In particular, using (\ref{eq:alt_consist_slpe}) with conventional {\it van Leer} limiter leads to the loss the TVD stability property, as demonstrated by the subsequent example.

Considering the following advection problem
\begin{displaymath}
u_t + u_x = 0,\qquad x\in [0,2]
\end{displaymath}
with periodic boundary condition and the initial data
\begin{displaymath}
u(x,0) = 100\ \textrm{ if }\ 0.75\le x\le 1.25,\ \textrm{ and }\ 0\ \textrm{ o.w. }
\end{displaymath}
At $t=2.0$, the exact solution is the same as the initial data.
Using $200$ cells and $r=0.4$ to mesh the domain, Figure~\ref{fg:sec_alt_slpe} presents the solutions obtained by: (a) slope~(\ref{eq:sec_case_slpe}) and $\phi^{\it \textrm{van Leer}}$, (b) slope~(\ref{eq:alt_consist_slpe}) and $\phi^{\it \textrm{van Leer}}$, and (c) slope~(\ref{eq:sec_case_slpe}) and $\phi^{\it \textrm{van Leer}}_{A,B}$.
The Rusanov flux, TVD RK2, and Courant number $0.8$ are used in all three tests.
\begin{figure}\centering
\begin{subfigure}[b]{0.32\textwidth}\centering
\includegraphics[trim=0.8in 0.1in 1.1in 0.0in, clip, width=\textwidth]{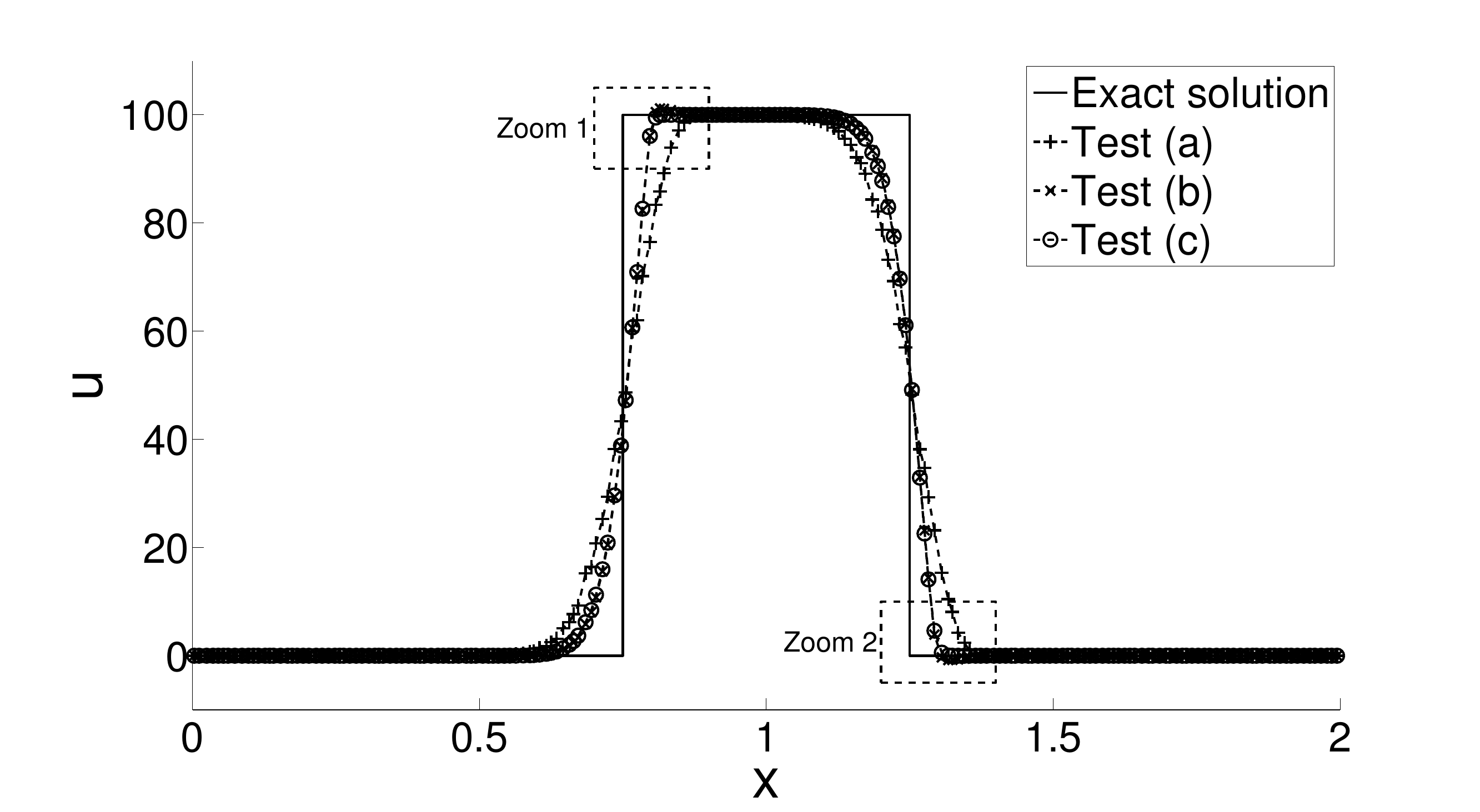}
\vglue -0.05 truein
\caption{Global view}
\label{fg:sec_alt_slpe_full}
\end{subfigure}
\begin{subfigure}[b]{0.32\textwidth}\centering
\includegraphics[trim=0.8in 0.1in 1.1in 0.0in, clip, width=\textwidth]{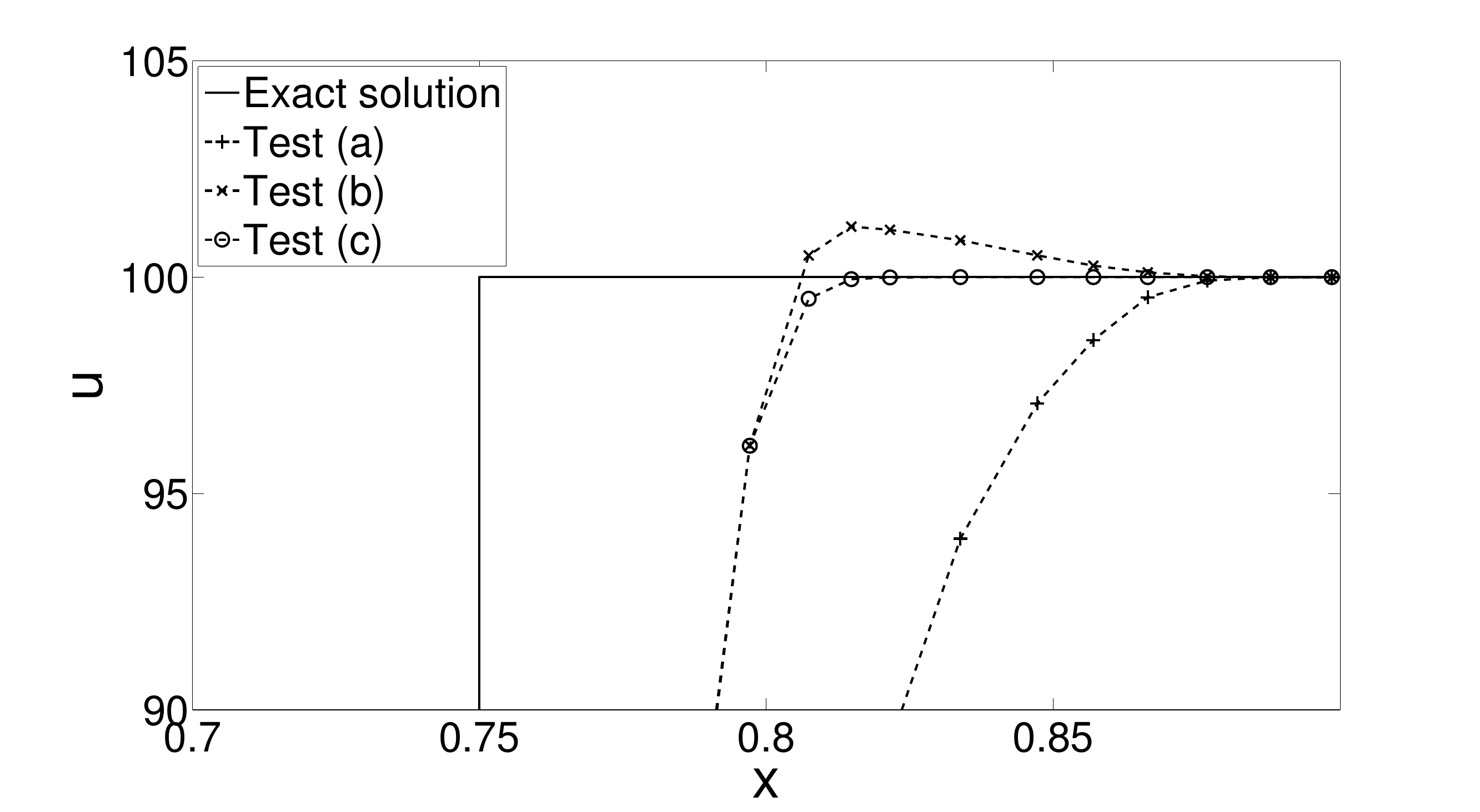}
\vglue -0.05 truein
\caption{Local view: zoom 1}
\label{fg:sec_alt_slpe_zoom_1}
\end{subfigure}
\begin{subfigure}[b]{0.32\textwidth}\centering
\includegraphics[trim=1.1in 0.1in 1.1in 0.0in, clip, width=\textwidth]{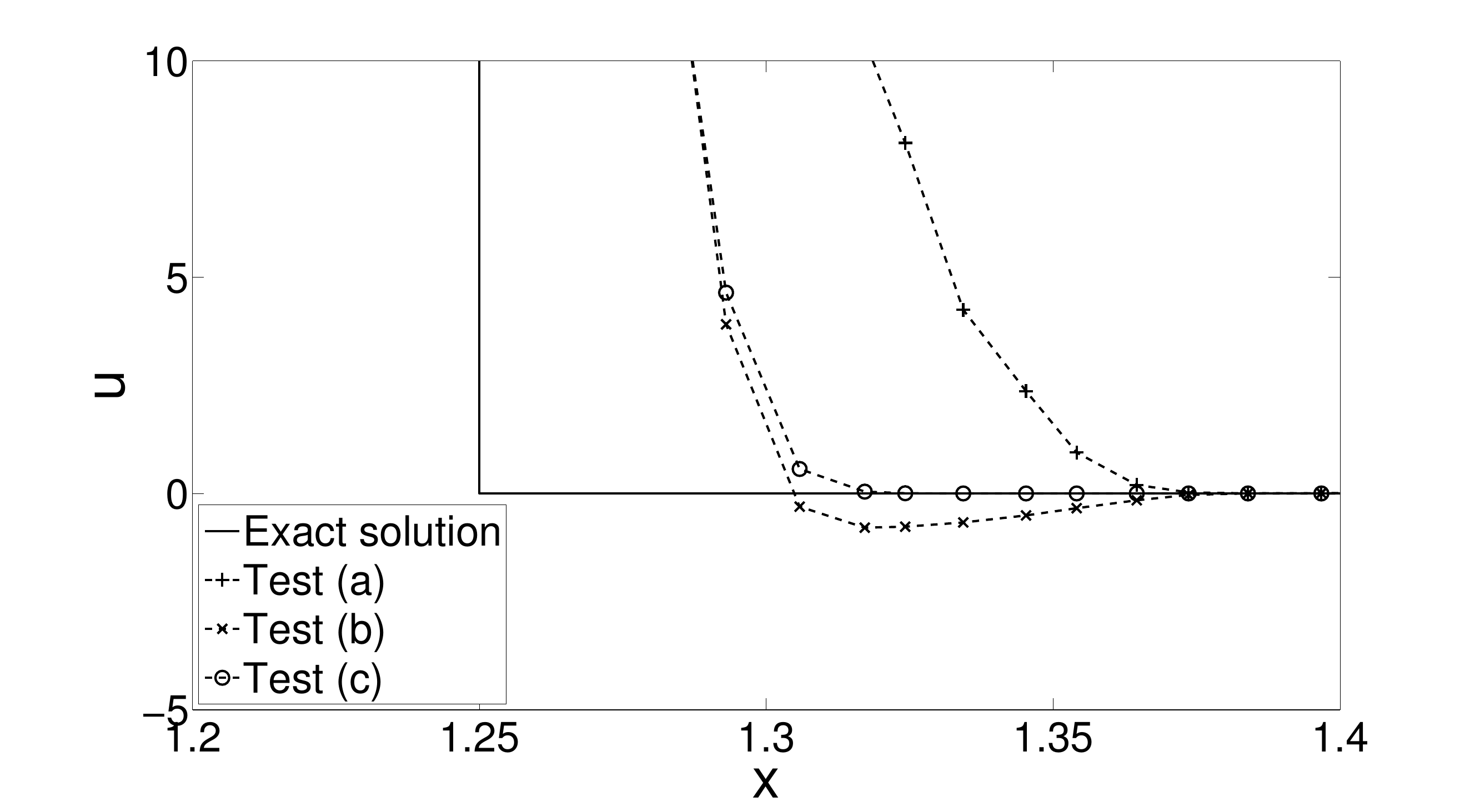}
\vglue -0.05 truein
\caption{Local view: zoom 2}
\label{fg:sec_alt_slpe_zoom_2}
\end{subfigure}
\vglue -0.2 truein
\caption{Numerical solutions at $t=2.0$ in (\ref{fg:sec_alt_slpe_full}) global view, and (\ref{fg:sec_alt_slpe_zoom_1}--\ref{fg:sec_alt_slpe_zoom_2}) local views, by: 
    test (a) -- slope~(\ref{eq:sec_case_slpe}) and $\phi^{\it \textrm{van Leer}}$, 
    test (b) -- slope~(\ref{eq:alt_consist_slpe}) and $\phi^{\it \textrm{van Leer}}$, 
    test (c) -- slope~(\ref{eq:sec_case_slpe}) and $\phi^{\it \textrm{van Leer}}_{A,B}$.}
\label{fg:sec_alt_slpe}
\end{figure}

On the one hand, the tests (b) and (c) show similar dispersion and dissipation properties, indicating that both are second-order accurate in space.
On the other hand, the alternative strategy (test (b)) shows both overshoot and undershoot near the jumps, whereas tests (a) and (c) exhibit TVD property as expected.
In fact, for a large class of possible alternative strategies, applying the conventional {\it van Leer} limiter leads to the loss of either second-order accuracy or TVD stability, as proved in Appendix~\ref{app:vanleer}.

Using the slope~(\ref{eq:alt_consist_slpe}) with the conventional {\it van Albada} limiter show similar tendency to produce overshoots and undershoots on highly irregular grids.
But the problem is much less severe than the {\it van Leer} limiter, because $\phi^{\it \textrm{van Albada}}$ is designed such that it stays away from the TVD stability bounds (see Figure~\ref{fg:sec_math_sweby}).
In particular, the magnitude of undershoots is about $0.04$ in absolute value by solving the same problem using this strategy on a extremely irregular mesh generated by $r=0.4995$.
Note that Section~\ref{sec:example} contains a 2D example showing non-physical solution that are caused by using the conventional {\it van Albada} limiter with a formal second-order accurate method on non-uniform grids.

\subsection{Capacity-form differencing}
\label{sec:alt_cap}
Another popular strategy to handle non-uniform rectilinear grids is to solve the capacity form equation~\cite{RJLeVeque:1997a, RJLeVeque:2002a, DACalhoun:2008a}.
Unlike the method of lines considered in this paper, the capacity-form differencing incorporates the time-integration explicitly in constructing the numerical flux, and it behaves very differently in one- and two-dimensional cases.

In one space dimension, let $\xi = \xi(x)$ be an increasing continuously differentiable function such that $\xi_i\eqdef\xi(x_i) = i\Delta\xi$.
Then the equation~(\ref{eq:sec_case_1dcl}) is rewritten as
\begin{equation}\label{eq:sec_cap_eqn}
\kappa(\xi)u_t + f(u)_\xi = 0
\end{equation}
where $\kappa(\xi) = x'(\xi)>0$.
Integrating (\ref{eq:sec_cap_eqn}) over a space-time slab $\Omega_i\times[t^n,t^{n+1}]$, and approximating $\kappa|_{\Omega_i}$ by $\kappa_i\eqdef\Delta x_i/\Delta\xi$, one arrives at
\begin{equation}\label{eq:sec_cap_full}
\frac{u^{n+1}-u^n}{\Delta t^n} + \frac{F_{i+1/2}^n-F_{i-1/2}^n}{\kappa_i\Delta\xi} = 0
\end{equation}
Here $F_{i+1/2}^n$ is a numerical approximation to the time-averaged flux across $\xi_{i+1/2}=\xi(x_{i+1/2})$; this flux explicitly incorporates $\Delta t^n$ in its high-resolution version, and it uses the conventional limiters to enhance nonlinear stability of the resulting scheme.
In principal, $F_{i+1/2}$ should be constructed using Riemann solvers to (\ref{eq:sec_cap_eqn}) rather than the original conservation law; but in practice, the latter is always used.
As explained in~\cite{RJLeVeque:2002a}, this strategy does not cause accuracy issue when $x(\xi)$ is sufficiently smooth.
When the map $x(\xi)$ is not smooth, such as the random meshes considered in this paper, the second-order accuracy is lost in the 1D case, as demonstrated by the example in Section~\ref{sec:alt_accuracy}.
It is difficult to say whether this reduction in accuracy is due to the usage of the wrong approximate Riemann solver or the limiter function that is designed for uniform grids; and an analysis of the capacity-form differencing method is beyond the scope of this paper.

The 2D capacity-form differencing method~\cite{RJLeVeque:2002a, DACalhoun:2008a}, however, behaves very differently from the 1D one.
In particular, because it is a single-stage method, it cannot rely on the multiple Runge-Kutta stages to account for contributions between diagonally adjacent cells (such as fluxes between $\Omega_{i,j}$ and $\Omega_{i\pm1,j\pm1}$).
These transverse contributions must be explicitly included in the numerical fluxes for high-order methods; this is a situation that does not appear for 1D problems.
In the author's experience, these transverse fluxes are the crucial components to achieve formally second-order on both uniform and non-uniform grids.
Furthermore, even on highly non-uniform meshes, the effects of these fluxes seem to counteract the reduced accuracy caused by using the original Riemann solver and/or conventional limiters and lead to a formally second-order method, see Section~\ref{sec:example} for more details.

\subsection{Accuracy test of the enhanced limiters}
\label{sec:alt_accuracy}
This example concerns about solving the problem in Section~\ref{sec:case} using the capacity-form differencing and the proposed MUSCL-MOL (i.e., the test (c) in Section~\ref{sec:alt_consist} with possibly different enhanced limiters).
The same meshes ($r=0.2\ \textrm{or}\ 0.3$), numerical flux (Roe flux), and Courant number ($0.6$) as in previous test are used here.
Tables~\ref{tb:sec_alt_cap}--\ref{tb:sec_alt_mod} summarize the $L_1$-errors and convergence rates obtained by the two methods, respectively.
The results only show the effects of the {\it van Leer} and {\it van Albada} limiters; other limiter functions lead to similar conclusion.
\begin{table}\centering
\caption{$L_1$ errors and convergence rates by capacity-form differencing to solve (\ref{eq:sec_case_1deuler}) and (\ref{eq:sec_case_ic}) using $\phi^{\it \textrm{van Leer}}$ or $\phi^{\it \textrm{van Albada}}$}
\label{tb:sec_alt_cap}
{\small
\setlength{\tabcolsep}{2pt}
\begin{tabular}{|c|c||cc|cc|cc|cc|cc|cc|}
\hline
 &
 & \multicolumn{6}{c|}{$r=0.2$} 
 & \multicolumn{6}{c|}{$r=0.3$} \\ \cline{3-14}
 &
 & \multicolumn{2}{c|}{$\rho$} & \multicolumn{2}{c|}{$u$} & \multicolumn{2}{c|}{$p$}
 & \multicolumn{2}{c|}{$\rho$} & \multicolumn{2}{c|}{$u$} & \multicolumn{2}{c|}{$p$} \\ \cline{2-14}
 & Mesh 
 & error & rate 
 & error & rate 
 & error & rate 
 & error & rate 
 & error & rate 
 & error & rate  \\ \hline
 \parbox[c]{4mm}{\multirow{5}{*}{\rotatebox[origin=c]{90}{$\phi^{\it \textrm{van Leer}}$}}} 
 & $100$  & $1.51\text{\sc{e}-}3$ &        & $1.64\text{\sc{e}-}3$ &        & $1.90\text{\sc{e}-}3$ &        
          & $1.99\text{\sc{e}-}3$ &        & $2.16\text{\sc{e}-}3$ &        & $2.52\text{\sc{e}-}3$ &        \\ 
 & $200$  & $4.93\text{\sc{e}-}4$ & $1.62$ & $5.32\text{\sc{e}-}4$ & $1.63$ & $5.87\text{\sc{e}-}4$ & $1.70$ 
          & $7.20\text{\sc{e}-}4$ & $1.47$ & $7.73\text{\sc{e}-}4$ & $1.48$ & $8.51\text{\sc{e}-}4$ & $1.57$ \\ 
 & $400$  & $1.83\text{\sc{e}-}4$ & $1.43$ & $1.93\text{\sc{e}-}4$ & $1.46$ & $2.18\text{\sc{e}-}4$ & $1.43$ 
          & $2.92\text{\sc{e}-}4$ & $1.30$ & $3.01\text{\sc{e}-}4$ & $1.36$ & $3.46\text{\sc{e}-}4$ & $1.30$ \\ 
 & $800$  & $8.45\text{\sc{e}-}5$ & $1.12$ & $8.86\text{\sc{e}-}5$ & $1.12$ & $9.78\text{\sc{e}-}5$ & $1.16$ 
          & $1.39\text{\sc{e}-}4$ & $1.07$ & $1.44\text{\sc{e}-}4$ & $1.06$ & $1.61\text{\sc{e}-}4$ & $1.10$ \\ 
 & $1600$ & $4.16\text{\sc{e}-}5$ & $1.02$ & $4.21\text{\sc{e}-}5$ & $1.07$ & $4.64\text{\sc{e}-}5$ & $1.08$ 
          & $7.01\text{\sc{e}-}5$ & $0.99$ & $7.12\text{\sc{e}-}5$ & $1.02$ & $7.89\text{\sc{e}-}5$ & $1.03$ \\ \hline
 \parbox[c]{4mm}{\multirow{5}{*}{\rotatebox[origin=c]{90}{$\phi^{\it \textrm{van Albada}}$}}} 
 & $100$  & $2.18\text{\sc{e}-}3$ &        & $2.29\text{\sc{e}-}3$ &        & $2.75\text{\sc{e}-}3$ &        
          & $2.65\text{\sc{e}-}3$ &        & $2.83\text{\sc{e}-}3$ &        & $3.42\text{\sc{e}-}3$ &        \\ 
 & $200$  & $6.44\text{\sc{e}-}4$ & $1.76$ & $7.00\text{\sc{e}-}4$ & $1.71$ & $7.86\text{\sc{e}-}4$ & $1.81$ 
          & $8.68\text{\sc{e}-}4$ & $1.61$ & $9.52\text{\sc{e}-}4$ & $1.57$ & $1.06\text{\sc{e}-}3$ & $1.69$ \\ 
 & $400$  & $2.17\text{\sc{e}-}4$ & $1.57$ & $2.32\text{\sc{e}-}4$ & $1.59$ & $2.66\text{\sc{e}-}4$ & $1.57$ 
          & $3.29\text{\sc{e}-}4$ & $1.40$ & $3.46\text{\sc{e}-}4$ & $1.46$ & $3.97\text{\sc{e}-}4$ & $1.42$ \\ 
 & $800$  & $9.19\text{\sc{e}-}5$ & $1.24$ & $9.85\text{\sc{e}-}5$ & $1.24$ & $1.10\text{\sc{e}-}4$ & $1.27$ 
          & $1.48\text{\sc{e}-}4$ & $1.15$ & $1.57\text{\sc{e}-}4$ & $1.14$ & $1.77\text{\sc{e}-}4$ & $1.17$ \\ 
 & $1600$ & $4.39\text{\sc{e}-}5$ & $1.06$ & $4.47\text{\sc{e}-}5$ & $1.14$ & $4.96\text{\sc{e}-}5$ & $1.15$ 
          & $7.38\text{\sc{e}-}5$ & $1.01$ & $7.54\text{\sc{e}-}5$ & $1.06$ & $8.41\text{\sc{e}-}5$ & $1.07$ \\ \hline
\end{tabular}
\setlength{\tabcolsep}{1pt}
}
\end{table}
\begin{table}\centering
\caption{$L_1$ errors and convergence rates by MUSCL-MOL to solve (\ref{eq:sec_case_1deuler}) and (\ref{eq:sec_case_ic}) using $\phi^{\it \textrm{van Leer}}_{A,B}$ or $\phi^{\it \textrm{van Albada}}_{A,B}$}
\label{tb:sec_alt_mod}
{\small
\setlength{\tabcolsep}{2pt}
\begin{tabular}{|c|c||cc|cc|cc|cc|cc|cc|}
\hline
 &
 & \multicolumn{6}{c|}{$r=0.2$} 
 & \multicolumn{6}{c|}{$r=0.3$} \\ \cline{3-14}
 &
 & \multicolumn{2}{c|}{$\rho$} & \multicolumn{2}{c|}{$u$} & \multicolumn{2}{c|}{$p$}
 & \multicolumn{2}{c|}{$\rho$} & \multicolumn{2}{c|}{$u$} & \multicolumn{2}{c|}{$p$} \\ \cline{2-14}
 & Mesh 
 & error & rate 
 & error & rate 
 & error & rate 
 & error & rate 
 & error & rate 
 & error & rate  \\ \hline
 \parbox[c]{4mm}{\multirow{5}{*}{\rotatebox[origin=c]{90}{$\phi^{\it \textrm{van Leer}}$}}} 
 & $100$  & $2.52\text{\sc{e}-}3$ &        & $2.80\text{\sc{e}-}3$ &        & $3.30\text{\sc{e}-}3$ &        
          & $2.50\text{\sc{e}-}3$ &        & $2.81\text{\sc{e}-}3$ &        & $3.29\text{\sc{e}-}3$ &        \\ 
 & $200$  & $7.35\text{\sc{e}-}4$ & $1.78$ & $7.03\text{\sc{e}-}4$ & $1.99$ & $8.25\text{\sc{e}-}4$ & $2.00$ 
          & $7.46\text{\sc{e}-}4$ & $1.74$ & $7.15\text{\sc{e}-}4$ & $1.97$ & $8.31\text{\sc{e}-}4$ & $1.98$ \\ 
 & $400$  & $1.41\text{\sc{e}-}4$ & $2.39$ & $1.59\text{\sc{e}-}4$ & $2.14$ & $1.74\text{\sc{e}-}4$ & $2.25$ 
          & $1.40\text{\sc{e}-}4$ & $2.41$ & $1.59\text{\sc{e}-}4$ & $2.17$ & $1.74\text{\sc{e}-}4$ & $2.26$ \\ 
 & $800$  & $3.15\text{\sc{e}-}5$ & $2.16$ & $3.24\text{\sc{e}-}5$ & $2.29$ & $3.65\text{\sc{e}-}5$ & $2.25$ 
          & $3.17\text{\sc{e}-}5$ & $2.15$ & $3.26\text{\sc{e}-}5$ & $2.25$ & $3.74\text{\sc{e}-}5$ & $2.21$ \\ 
 & $1600$ & $6.96\text{\sc{e}-}6$ & $2.18$ & $7.34\text{\sc{e}-}6$ & $2.14$ & $8.58\text{\sc{e}-}6$ & $2.09$ 
          & $7.06\text{\sc{e}-}6$ & $2.17$ & $7.39\text{\sc{e}-}6$ & $2.19$ & $8.61\text{\sc{e}-}6$ & $2.12$ \\ \hline
 \parbox[c]{4mm}{\multirow{5}{*}{\rotatebox[origin=c]{90}{$\phi^{\it \textrm{van Albada}}$}}} 
 & $100$  & $3.07\text{\sc{e}-}3$ &        & $3.54\text{\sc{e}-}3$ &        & $4.15\text{\sc{e}-}3$ &         
          & $3.04\text{\sc{e}-}3$ &        & $3.54\text{\sc{e}-}3$ &        & $4.11\text{\sc{e}-}3$ &        \\ 
 & $200$  & $9.00\text{\sc{e}-}4$ & $1.77$ & $8.82\text{\sc{e}-}4$ & $2.01$ & $1.04\text{\sc{e}-}3$ & $2.00$ 
          & $9.08\text{\sc{e}-}4$ & $1.74$ & $8.92\text{\sc{e}-}4$ & $1.99$ & $1.04\text{\sc{e}-}3$ & $1.98$ \\ 
 & $400$  & $2.09\text{\sc{e}-}4$ & $2.11$ & $2.09\text{\sc{e}-}4$ & $2.07$ & $2.58\text{\sc{e}-}4$ & $2.01$ 
          & $2.29\text{\sc{e}-}4$ & $1.99$ & $2.17\text{\sc{e}-}4$ & $2.04$ & $2.65\text{\sc{e}-}4$ & $1.98$ \\ 
 & $800$  & $4.96\text{\sc{e}-}5$ & $2.08$ & $4.73\text{\sc{e}-}5$ & $2.14$ & $6.35\text{\sc{e}-}5$ & $2.02$ 
          & $4.55\text{\sc{e}-}5$ & $2.33$ & $4.70\text{\sc{e}-}5$ & $2.21$ & $5.82\text{\sc{e}-}5$ & $2.19$ \\ 
 & $1600$ & $1.30\text{\sc{e}-}5$ & $1.94$ & $1.06\text{\sc{e}-}5$ & $2.17$ & $1.44\text{\sc{e}-}5$ & $2.14$ 
          & $1.50\text{\sc{e}-}5$ & $1.60$ & $1.18\text{\sc{e}-}5$ & $1.99$ & $1.73\text{\sc{e}-}5$ & $1.75$ \\ \hline
\end{tabular}
\setlength{\tabcolsep}{1pt}
}
\end{table}

Regarding the capacity-form differencing (Table~\ref{tb:sec_alt_cap}), as the mapping $x(\xi)$ becomes less smooth by either increasing $r$ or refining the mesh, the convergence rates decrease from near second-order to only first-order, as mentioned before.

For the MUSCL-MOL presented in this paper, results in Table~\ref{tb:sec_alt_mod} confirm that the enhanced limiters in Section~\ref{sec:limiter} recover second-order accuracy w.r.t. reference mesh sizes even on highly non-uniform grids.
Furthermore, by looking at the absolute values of the errors, they appear to be nearly independent of the perturbation level $r$.

\subsection{Blast-wave problem}
At last, the aforementioned methods are tested to solve the one-dimensional Woodward-Colella blast-wave problem~\cite{PWoodward:1984a}.
It is an Euler problem on the domain $x\in[0,1]$ with initial condition 
\begin{align}
\notag
&\left.p(x,0)\right|_{0<x<0.1} = 1000.0,\quad
 \left.p(x,0)\right|_{0.1<x<0.9} = 0.01,\quad
 \left.p(x,0)\right|_{0.9<x<1} = 100.0 \\
\label{eq:sec_alt_blst}
&\left.\rho(x,0)\right|_{0<x<1} = 1.0,\quad
 \left.u(x,0)\right|_{0<x<1} = 0.0
\end{align}
The two boundaries are fixed walls.
The solutions at $T=0.038$ exhibit strong shocks with two adjacent large density jumps.
The densities computed using different methods are plotted in Figures~\ref{fg:sec_alt_blast_den_va}.
All the solutions are computed using either the conventional or enhanced {\it van Albada} limiter, as indicated in the figure.
The same mesh is used for all the tests; it is composed of $400$ cells that are computed using $r=0.3$.
Figure~\ref{fg:sec_alt_blast_den_va} also includes a reference solution that is computed on a much finer uniform grid.
\begin{figure}\centering
\begin{subfigure}[b]{.48\textwidth}\centering
    \includegraphics[trim=0.5in 0.1in 0.5in 0.5in, clip, width=\textwidth]{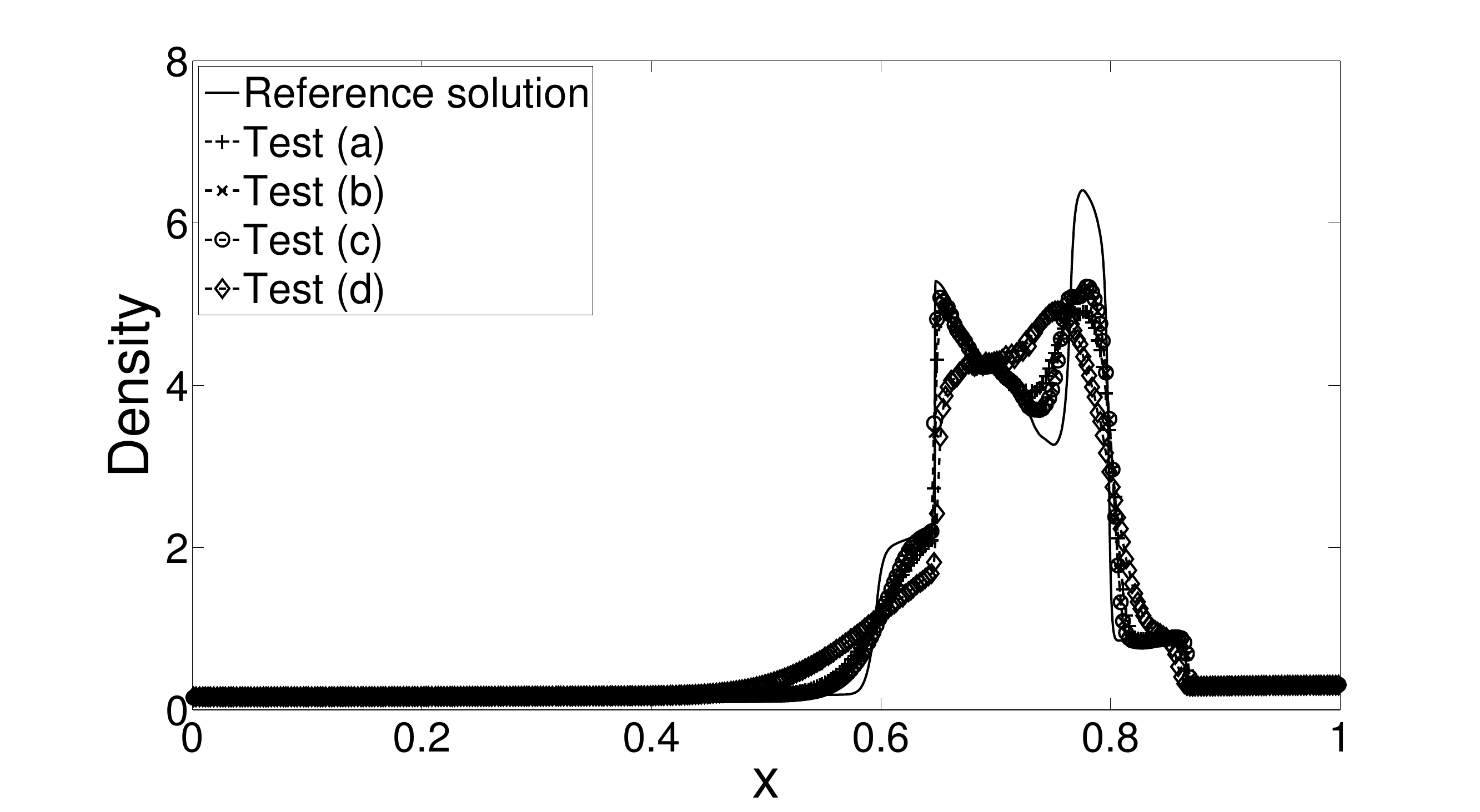}
    \vglue -0.05 truein
    \caption{Global view}
    \label{fg:sec_alt_blast_den_va_glob}
\end{subfigure}
\begin{subfigure}[b]{.48\textwidth}\centering
    \includegraphics[trim=0.5in 0.1in 0.5in 0.5in, clip, width=\textwidth]{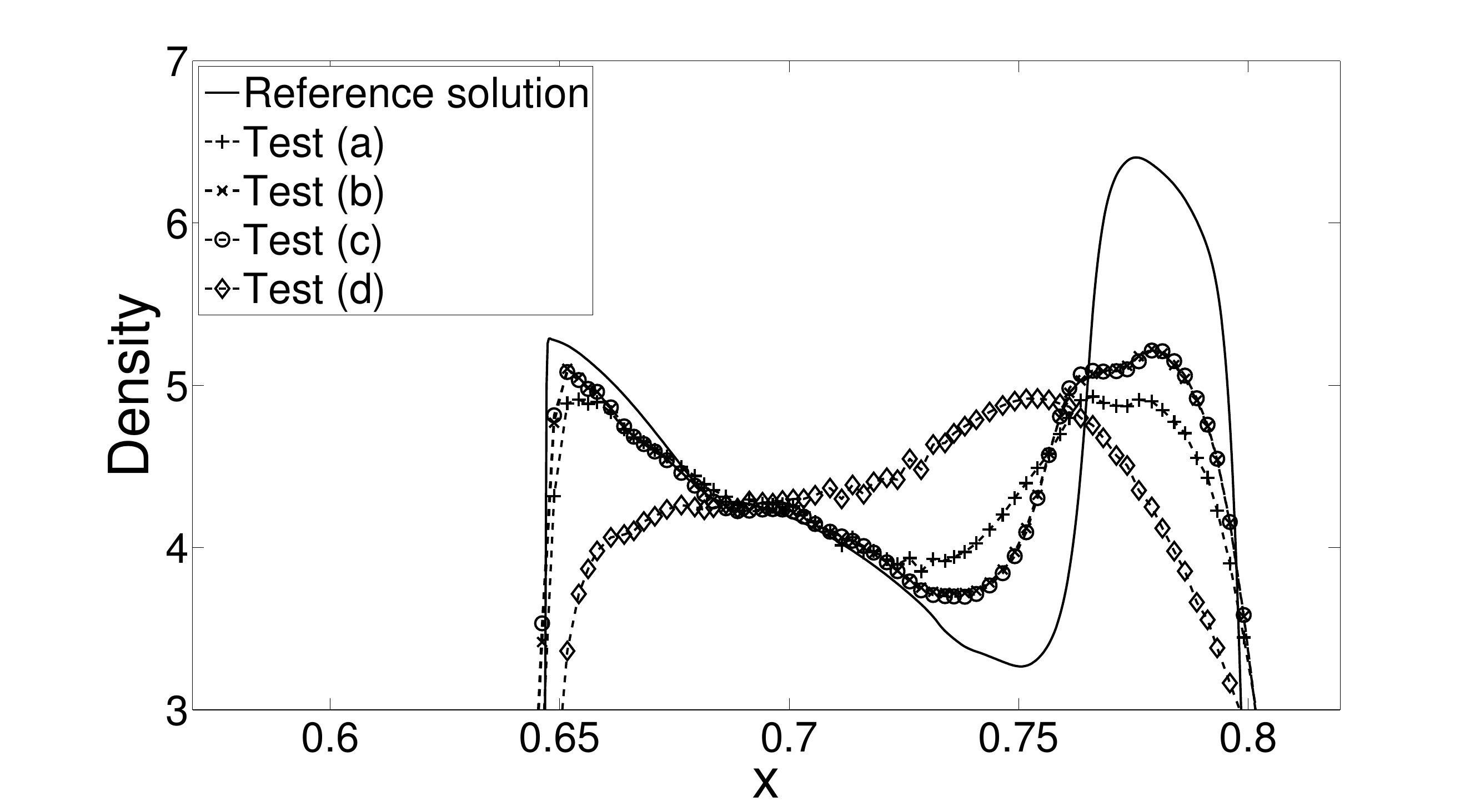}
    \vglue -0.05 truein
    \caption{Local view}
    \label{fg:sec_alt_blast_den_va_loc}
\end{subfigure}
\vglue -0.2 truein
\caption{Densities at $T=0.038$ of blast-wave problem in (\ref{fg:sec_alt_blast_den_va_glob}) global view and (\ref{fg:sec_alt_blast_den_va_loc}) local view, by: 
    test (a) -- slope~(\ref{eq:sec_case_slpe}) and $\phi^{\it \textrm{van Albada}}$, 
    test (b) -- slope~(\ref{eq:alt_consist_slpe}) and $\phi^{\it \textrm{van Albada}}$, 
    test (c) -- slope~(\ref{eq:sec_case_slpe}) and $\phi^{\it \textrm{van Albada}}_{A,B}$, 
    and test (d) -- capacity-form differencing and $\phi^{\it \textrm{van Alabda}}$}
\label{fg:sec_alt_blast_den_va}
\end{figure}
From the figures, there are the following observations 
{\small
\begin{enumerate}
\item The capacity-form differencing is the least accurate among the four tests, possibly due to the reason in Section~\ref{sec:alt_cap}.
\item Among the three MUSCL-MOL tests, the (\ref{eq:sec_case_slpe}) with $\phi^{\it \textrm{van Albada}}$ is less accurate than the other two, which are formally second-order accurate on irregular meshes.
\end{enumerate}
}

The conclusion from the one-dimensional example is that: within the MUSCL-MOL framework, the effects of applying conventional limiters (especially the nonlinear ones) on non-uniform meshes depend on the particular implementation, but it more or less reduces the formal accuracy or stability of the scheme.
The method present in this paper focuses on the MUSCL implemented by using (\ref{eq:sec_case_slpe}) and (\ref{eq:sec_case_monitor}), and proposes enhancements to conventional limiters so that the accuracy and stability are almost independent of the irregularity of the grids.

\section{Two-dimensional examples}
\label{sec:example}
The one-dimensional MUSCL-MOL method extends to 2D rectangular grids naturally by applying the 1D spatial operators to each direction.
The numerical performance of combining (\ref{eq:sec_case_slpe}) and (\ref{eq:sec_case_monitor}) with both conventional and enhanced limiter functions are assessed by solving several benchmark two-dimensional flow problems.
Note that the three enhanced piecewise linear limiters are the same as the improved ones by Berger, hence they are not tested here.

As mentioned at the end of Section~\ref{sec:alt_cap}, the 2D capacity-form differencing method for quadrilateral grids (details are found in numerous references, such as~\cite{RJLeVeque:2002a}) remains formally second-order accurate even on highly non-uniform rectilinear grids, possibly due to the explicit inclusion of transverse fluxes.
This is confirmed in Section~\ref{sec:example_vortex}.
However, similar as the MUSCL-MOL with consistent slopes in previous section, applying the capacity-form differencing with conventional limiter may reduce the stability of the scheme, in both cases of the {\it van Leer} limiter and the {\it van Albada} limiter, as demonstrated in Section~\ref{sec:example_bubble}.

To generate the non-uniform Cartesian meshes, the algorithm in Section~\ref{sec:case} is applied to each direction independently.
The resulting mesh is a rectilinear one with non-uniformity level of the spacing in both $x-$ and $y-$ directions controlled by the same parameter $r\in[0,0.5)$.

\subsection{Isentropic vortex advection}
\label{sec:example_vortex}
The isentropic vortex advection problem \cite{CWShu:2003a} is to solve the 2D Euler equations
\begin{equation}\label{eq:sec_num_2deuler}
\bs{w}_t + \nabla_{\bs{x}}\cdot\bs{F}(\bs{w}) = 0,\quad
\bs{w} = 
\left[\begin{array}{c}
\rho \\ \rho u \\ \rho v \\ E
\end{array}\right],\quad
\bs{F} = 
\left[\begin{array}{cc}
\rho u & \rho v \\
\rho u^2+p & \rho uv \\
\rho uv & \rho v^2+p \\
(E+p)u & (E+p)v 
\end{array}\right]
\end{equation}
where $u$ and $v$ are velocity components, and the total energy is $E = p/(\gamma-1)  + \rho(u^2+v^2)/2,\ \gamma=1.4$.
The computational domain is $(x,y)\in[-5,5]^2$, with periodic boundary condition imposed on all four edges.
Given the initial condition
\begin{align*}
u(x,y,0) &= 1-\frac{\epsilon y}{2\pi}\exp\left(\frac{1}{2}(1-r^2)\right),\quad v(x,y,0) = 1+\frac{\epsilon x}{2\pi}\exp\left(\frac{1}{2}(1-r^2)\right) \\
\rho(x,y,0) &= \left(1-\frac{(\gamma-1)\epsilon^2}{8\gamma \pi^2}\exp(1-r^2)\right)^{\frac{1}{\gamma-1}},\quad
p(x,y,0) = \rho(x,y,0)^\gamma
\end{align*}
with $r^2=x^2+y^2$ and $\epsilon=5$, the flow is an isentropic vortex with uniform entropy $p/\rho^\gamma\equiv1$ that moves at constant velocity $(1,1)$.
At $T=10$, the vortex moves to its original position; thus the initial condition serves as the reference solution.

Three tests are used to solve the problem: (a) MUSCL-MOL using conventional limiters, (b) MUSCL-MOL using enhanced limiters given in Section~\ref{sec:limiter}, and (c) capacity-form differencing.
Two sets of meshes generated by $r=0.2$ and $r=0.3$ are used to assess the convergence behaviors.
For each $r$, four meshes with sizes ranging from $20^2$ to $160^2$ are used.
The numerical errors at $T=10$ measured in $L_1$-norm as well as the convergence rates computed using the conventional or enhanced {\it van Leer} limiters are summarized in Table~\ref{tb:sec_example_vortex_vl}.
Similar results by using the {\it van Albada} limiters are presented in Table~\ref{tb:sec_example_vortex_va}.
\begin{table}\centering
\caption{$L_1$ errors and convergence rates: solving the vortex problem using {\it van Leer} limiters and various meshes --  
test (a) MUSCL-MOL and $\phi^{\it \textrm{van Leer}}$, test (b) MUSCL-MOL and $\phi^{\it \textrm{van Leer}}_{A,B}$, and test (c) capacity-form differencing}
\label{tb:sec_example_vortex_vl}
{\small
\setlength{\tabcolsep}{2pt}
\begin{tabular}{|c|c||cc|cc|cc|cc|cc|cc|}
\hline
 &
 & \multicolumn{4}{c|}{Test (a)} 
 & \multicolumn{4}{c|}{Test (b)} 
 & \multicolumn{4}{c|}{Test (c)} \\ \cline{3-14}
 &
 & \multicolumn{2}{c|}{$r=0.2$} 
 & \multicolumn{2}{c|}{$r=0.3$} 
 & \multicolumn{2}{c|}{$r=0.2$}
 & \multicolumn{2}{c|}{$r=0.3$}
 & \multicolumn{2}{c|}{$r=0.2$} 
 & \multicolumn{2}{c|}{$r=0.3$} \\ \cline{2-14}
 & Mesh 
 & error & rate 
 & error & rate 
 & error & rate 
 & error & rate 
 & error & rate 
 & error & rate  \\ \hline
 \parbox[c]{2mm}{\multirow{4}{*}{\rotatebox[origin=c]{90}{$\rho$}}} 
 & $20^2$  & $1.89\text{\sc{e}-}0$ &        & $1.96\text{\sc{e}-}0$ &        & $1.89\text{\sc{e}-}0$ &        
           & $1.89\text{\sc{e}-}0$ &        & $1.72\text{\sc{e}-}0$ &        & $1.73\text{\sc{e}-}0$ &        \\ 
 & $40^2$  & $8.96\text{\sc{e}-}1$ & $1.08$ & $1.11\text{\sc{e}-}0$ & $0.82$ & $7.44\text{\sc{e}-}1$ & $1.34$ 
           & $7.58\text{\sc{e}-}1$ & $1.32$ & $6.30\text{\sc{e}-}1$ & $1.45$ & $6.58\text{\sc{e}-}1$ & $1.39$ \\ 
 & $80^2$  & $2.98\text{\sc{e}-}1$ & $1.59$ & $4.82\text{\sc{e}-}1$ & $1.21$ & $1.50\text{\sc{e}-}1$ & $2.31$ 
           & $1.55\text{\sc{e}-}1$ & $2.29$ & $1.40\text{\sc{e}-}1$ & $2.17$ & $1.57\text{\sc{e}-}1$ & $2.06$ \\ 
 & $160^2$ & $1.15\text{\sc{e}-}1$ & $1.37$ & $2.28\text{\sc{e}-}1$ & $1.08$ & $3.21\text{\sc{e}-}2$ & $2.22$ 
           & $3.44\text{\sc{e}-}2$ & $2.17$ & $3.59\text{\sc{e}-}2$ & $1.98$ & $4.10\text{\sc{e}-}2$ & $1.94$ \\ \hline
 \parbox[c]{2mm}{\multirow{4}{*}{\rotatebox[origin=c]{90}{$u$}}} 
 & $20^2$  & $4.13\text{\sc{e}-}0$ &        & $4.39\text{\sc{e}-}0$ &        & $4.08\text{\sc{e}-}0$ &        
           & $4.15\text{\sc{e}-}0$ &        & $3.80\text{\sc{e}-}0$ &        & $3.88\text{\sc{e}-}0$ &        \\ 
 & $40^2$  & $1.67\text{\sc{e}-}0$ & $1.30$ & $2.17\text{\sc{e}-}0$ & $1.02$ & $1.39\text{\sc{e}-}0$ & $1.55$ 
           & $1.44\text{\sc{e}-}0$ & $1.52$ & $1.26\text{\sc{e}-}0$ & $1.59$ & $1.38\text{\sc{e}-}0$ & $1.49$ \\ 
 & $80^2$  & $5.45\text{\sc{e}-}1$ & $1.60$ & $8.43\text{\sc{e}-}1$ & $1.36$ & $4.01\text{\sc{e}-}1$ & $1.79$ 
           & $4.25\text{\sc{e}-}1$ & $1.76$ & $3.52\text{\sc{e}-}1$ & $1.84$ & $4.03\text{\sc{e}-}1$ & $1.78$ \\ 
 & $160^2$ & $1.95\text{\sc{e}-}1$ & $1.49$ & $3.73\text{\sc{e}-}1$ & $1.18$ & $1.08\text{\sc{e}-}1$ & $1.89$ 
           & $1.16\text{\sc{e}-}1$ & $1.88$ & $9.36\text{\sc{e}-}2$ & $1.91$ & $1.09\text{\sc{e}-}1$ & $1.88$ \\ \hline
 \parbox[c]{2mm}{\multirow{4}{*}{\rotatebox[origin=c]{90}{$v$}}} 
 & $20^2$  & $4.15\text{\sc{e}-}0$ &        & $4.44\text{\sc{e}-}0$ &        & $4.11\text{\sc{e}-}0$ &        
           & $4.15\text{\sc{e}-}0$ &        & $3.76\text{\sc{e}-}0$ &        & $3.83\text{\sc{e}-}0$ &        \\ 
 & $40^2$  & $1.60\text{\sc{e}-}0$ & $1.38$ & $2.05\text{\sc{e}-}0$ & $1.11$ & $1.32\text{\sc{e}-}0$ & $1.63$ 
           & $1.36\text{\sc{e}-}0$ & $1.61$ & $1.19\text{\sc{e}-}0$ & $1.67$ & $1.26\text{\sc{e}-}0$ & $1.60$ \\ 
 & $80^2$  & $5.03\text{\sc{e}-}1$ & $1.67$ & $8.35\text{\sc{e}-}1$ & $1.30$ & $3.45\text{\sc{e}-}1$ & $1.94$ 
           & $3.64\text{\sc{e}-}1$ & $1.90$ & $3.14\text{\sc{e}-}1$ & $1.92$ & $3.53\text{\sc{e}-}1$ & $1.84$ \\ 
 & $160^2$ & $1.95\text{\sc{e}-}1$ & $1.37$ & $3.95\text{\sc{e}-}1$ & $1.08$ & $8.90\text{\sc{e}-}2$ & $1.96$ 
           & $9.47\text{\sc{e}-}2$ & $1.95$ & $7.96\text{\sc{e}-}2$ & $1.98$ & $9.25\text{\sc{e}-}2$ & $1.93$ \\ \hline
 \parbox[c]{2mm}{\multirow{4}{*}{\rotatebox[origin=c]{90}{$p$}}} 
 & $20^2$  & $2.55\text{\sc{e}-}0$ &        & $2.64\text{\sc{e}-}0$ &        & $2.53\text{\sc{e}-}0$ &        
           & $2.55\text{\sc{e}-}0$ &        & $2.38\text{\sc{e}-}0$ &        & $2.40\text{\sc{e}-}0$ &        \\ 
 & $40^2$  & $1.18\text{\sc{e}-}0$ & $1.11$ & $1.49\text{\sc{e}-}0$ & $0.82$ & $9.60\text{\sc{e}-}1$ & $1.40$ 
           & $9.79\text{\sc{e}-}1$ & $1.38$ & $8.48\text{\sc{e}-}1$ & $1.49$ & $8.93\text{\sc{e}-}1$ & $1.43$ \\ 
 & $80^2$  & $3.83\text{\sc{e}-}1$ & $1.62$ & $6.40\text{\sc{e}-}1$ & $1.22$ & $1.91\text{\sc{e}-}1$ & $2.33$ 
           & $1.98\text{\sc{e}-}1$ & $2.31$ & $1.87\text{\sc{e}-}1$ & $2.18$ & $2.13\text{\sc{e}-}1$ & $2.07$ \\ 
 & $160^2$ & $1.55\text{\sc{e}-}1$ & $1.31$ & $3.12\text{\sc{e}-}1$ & $1.03$ & $4.14\text{\sc{e}-}2$ & $2.21$ 
           & $4.45\text{\sc{e}-}2$ & $2.15$ & $4.66\text{\sc{e}-}2$ & $2.01$ & $5.55\text{\sc{e}-}2$ & $1.94$ \\ \hline
\end{tabular}
\setlength{\tabcolsep}{1pt}
}
\end{table}
\begin{table}\centering
\caption{$L_1$ errors and convergence rates: solving vortex problem using {\it van Albada} limiters and various meshes --  
test (a) MUSCL-MOL and $\phi^{\it \textrm{van Albada}}$, test (b) MUSCL-MOL and $\phi^{\it \textrm{van Albada}}_{A,B}$, and test (c) capacity-form differencing}
\label{tb:sec_example_vortex_va}
{\small
\setlength{\tabcolsep}{2pt}
\begin{tabular}{|c|c||cc|cc|cc|cc|cc|cc|}
\hline
 &
 & \multicolumn{4}{c|}{Test (a)} 
 & \multicolumn{4}{c|}{Test (b)} 
 & \multicolumn{4}{c|}{Test (c)} \\ \cline{3-14}
 &
 & \multicolumn{2}{c|}{$r=0.2$} 
 & \multicolumn{2}{c|}{$r=0.3$} 
 & \multicolumn{2}{c|}{$r=0.2$}
 & \multicolumn{2}{c|}{$r=0.3$}
 & \multicolumn{2}{c|}{$r=0.2$} 
 & \multicolumn{2}{c|}{$r=0.3$} \\ \cline{2-14}
 & Mesh 
 & error & rate 
 & error & rate 
 & error & rate 
 & error & rate 
 & error & rate 
 & error & rate  \\ \hline
 \parbox[c]{2mm}{\multirow{4}{*}{\rotatebox[origin=c]{90}{$\rho$}}} 
 & $20^2$  & $2.08\text{\sc{e}-}0$ &        & $2.14\text{\sc{e}-}0$ &        & $2.04\text{\sc{e}-}0$ &        
           & $2.04\text{\sc{e}-}0$ &        & $1.94\text{\sc{e}-}0$ &        & $1.94\text{\sc{e}-}0$ &        \\ 
 & $40^2$  & $1.12\text{\sc{e}-}0$ & $0.89$ & $1.35\text{\sc{e}-}0$ & $0.66$ & $9.14\text{\sc{e}-}1$ & $1.16$ 
           & $9.28\text{\sc{e}-}1$ & $1.14$ & $8.20\text{\sc{e}-}1$ & $1.24$ & $8.51\text{\sc{e}-}1$ & $1.19$ \\ 
 & $80^2$  & $4.16\text{\sc{e}-}1$ & $1.44$ & $6.61\text{\sc{e}-}1$ & $1.03$ & $1.95\text{\sc{e}-}1$ & $2.23$ 
           & $1.96\text{\sc{e}-}1$ & $2.23$ & $1.83\text{\sc{e}-}1$ & $2.16$ & $2.05\text{\sc{e}-}1$ & $2.05$ \\ 
 & $160^2$ & $1.69\text{\sc{e}-}1$ & $1.30$ & $3.29\text{\sc{e}-}1$ & $1.01$ & $3.71\text{\sc{e}-}2$ & $2.39$ 
           & $3.88\text{\sc{e}-}2$ & $2.35$ & $4.28\text{\sc{e}-}2$ & $2.10$ & $5.14\text{\sc{e}-}2$ & $2.00$ \\ \hline
 \parbox[c]{2mm}{\multirow{4}{*}{\rotatebox[origin=c]{90}{$u$}}} 
 & $20^2$  & $4.67\text{\sc{e}-}0$ &        & $4.90\text{\sc{e}-}0$ &        & $4.53\text{\sc{e}-}0$ &        
           & $4.60\text{\sc{e}-}0$ &        & $4.37\text{\sc{e}-}0$ &        & $4.45\text{\sc{e}-}0$ &        \\ 
 & $40^2$  & $2.05\text{\sc{e}-}0$ & $1.19$ & $2.66\text{\sc{e}-}0$ & $0.88$ & $1.59\text{\sc{e}-}0$ & $1.51$ 
           & $1.64\text{\sc{e}-}0$ & $1.49$ & $1.48\text{\sc{e}-}0$ & $1.56$ & $1.58\text{\sc{e}-}0$ & $1.49$ \\ 
 & $80^2$  & $6.66\text{\sc{e}-}1$ & $1.62$ & $1.13\text{\sc{e}-}0$ & $1.24$ & $4.35\text{\sc{e}-}1$ & $1.87$ 
           & $4.56\text{\sc{e}-}1$ & $1.85$ & $4.01\text{\sc{e}-}1$ & $1.88$ & $4.48\text{\sc{e}-}1$ & $1.82$ \\ 
 & $160^2$ & $2.63\text{\sc{e}-}1$ & $1.34$ & $5.36\text{\sc{e}-}1$ & $1.07$ & $1.20\text{\sc{e}-}1$ & $1.86$ 
           & $1.27\text{\sc{e}-}1$ & $1.85$ & $1.09\text{\sc{e}-}1$ & $1.89$ & $1.24\text{\sc{e}-}1$ & $1.85$ \\ \hline
 \parbox[c]{2mm}{\multirow{4}{*}{\rotatebox[origin=c]{90}{$v$}}} 
 & $20^2$  & $4.75\text{\sc{e}-}0$ &        & $4.99\text{\sc{e}-}0$ &        & $4.59\text{\sc{e}-}0$ &        
           & $4.64\text{\sc{e}-}0$ &        & $4.37\text{\sc{e}-}0$ &        & $4.42\text{\sc{e}-}0$ &        \\ 
 & $40^2$  & $1.97\text{\sc{e}-}0$ & $1.27$ & $2.54\text{\sc{e}-}0$ & $0.97$ & $1.56\text{\sc{e}-}0$ & $1.56$ 
           & $1.59\text{\sc{e}-}0$ & $1.55$ & $1.44\text{\sc{e}-}0$ & $1.60$ & $1.51\text{\sc{e}-}0$ & $1.55$ \\ 
 & $80^2$  & $6.71\text{\sc{e}-}1$ & $1.56$ & $1.16\text{\sc{e}-}0$ & $1.13$ & $3.94\text{\sc{e}-}1$ & $1.98$ 
           & $4.11\text{\sc{e}-}1$ & $1.95$ & $3.79\text{\sc{e}-}1$ & $1.93$ & $4.16\text{\sc{e}-}1$ & $1.86$ \\ 
 & $160^2$ & $2.78\text{\sc{e}-}1$ & $1.27$ & $5.72\text{\sc{e}-}1$ & $1.02$ & $1.05\text{\sc{e}-}1$ & $1.91$ 
           & $1.10\text{\sc{e}-}1$ & $1.90$ & $9.83\text{\sc{e}-}2$ & $1.94$ & $1.11\text{\sc{e}-}1$ & $1.90$ \\ \hline
 \parbox[c]{2mm}{\multirow{4}{*}{\rotatebox[origin=c]{90}{$p$}}} 
 & $20^2$  & $2.76\text{\sc{e}-}0$ &        & $2.82\text{\sc{e}-}0$ &        & $2.72\text{\sc{e}-}0$ &        
           & $2.73\text{\sc{e}-}0$ &        & $2.62\text{\sc{e}-}0$ &        & $2.64\text{\sc{e}-}0$ &        \\ 
 & $40^2$  & $1.47\text{\sc{e}-}0$ & $0.90$ & $1.81\text{\sc{e}-}0$ & $0.64$ & $1.17\text{\sc{e}-}0$ & $1.21$ 
           & $1.20\text{\sc{e}-}0$ & $1.19$ & $1.10\text{\sc{e}-}0$ & $1.25$ & $1.14\text{\sc{e}-}0$ & $1.20$ \\ 
 & $80^2$  & $5.35\text{\sc{e}-}1$ & $1.46$ & $8.82\text{\sc{e}-}1$ & $1.04$ & $2.38\text{\sc{e}-}1$ & $2.29$ 
           & $2.46\text{\sc{e}-}1$ & $2.28$ & $2.44\text{\sc{e}-}1$ & $2.17$ & $2.74\text{\sc{e}-}1$ & $2.06$ \\ 
 & $160^2$ & $2.25\text{\sc{e}-}1$ & $1.25$ & $4.51\text{\sc{e}-}1$ & $0.97$ & $4.81\text{\sc{e}-}2$ & $2.32$ 
           & $5.06\text{\sc{e}-}2$ & $2.28$ & $5.64\text{\sc{e}-}2$ & $2.11$ & $6.94\text{\sc{e}-}2$ & $1.98$ \\ \hline
\end{tabular}
\setlength{\tabcolsep}{1pt}
}
\end{table}
In Figure~\ref{fg:sec_example_vortex}, the pressure contours obtained by the three methods using the {\it van Albada} limiters on the finest mesh with $r=0.3$ are plotted.
Clearly, the test (b) and test (c) show similar accuracy property, which are much better than the test (a).
Using {\it van Leer} limiters lead to similar results.
\begin{figure}\centering
\begin{subfigure}[b]{0.32\textwidth}\centering
\includegraphics[trim=0.7in 0.0in 1.3in 0.5in, clip, width=\textwidth]{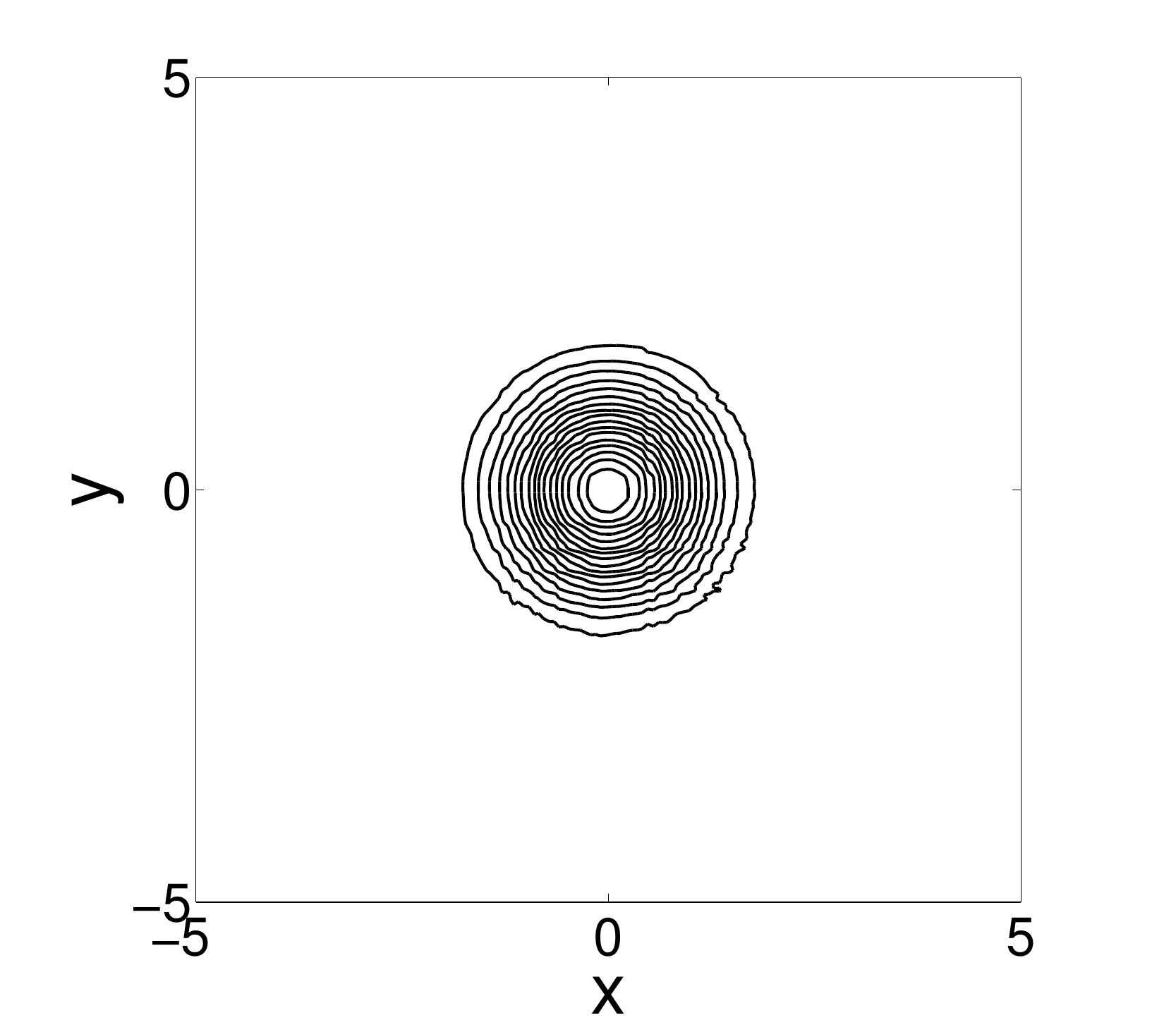}
\vglue -0.05 truein
\caption{Test (a)}
\label{fg:sec_example_vortex_org}
\end{subfigure}
\begin{subfigure}[b]{0.32\textwidth}\centering
\includegraphics[trim=0.7in 0.0in 1.3in 0.5in, clip, width=\textwidth]{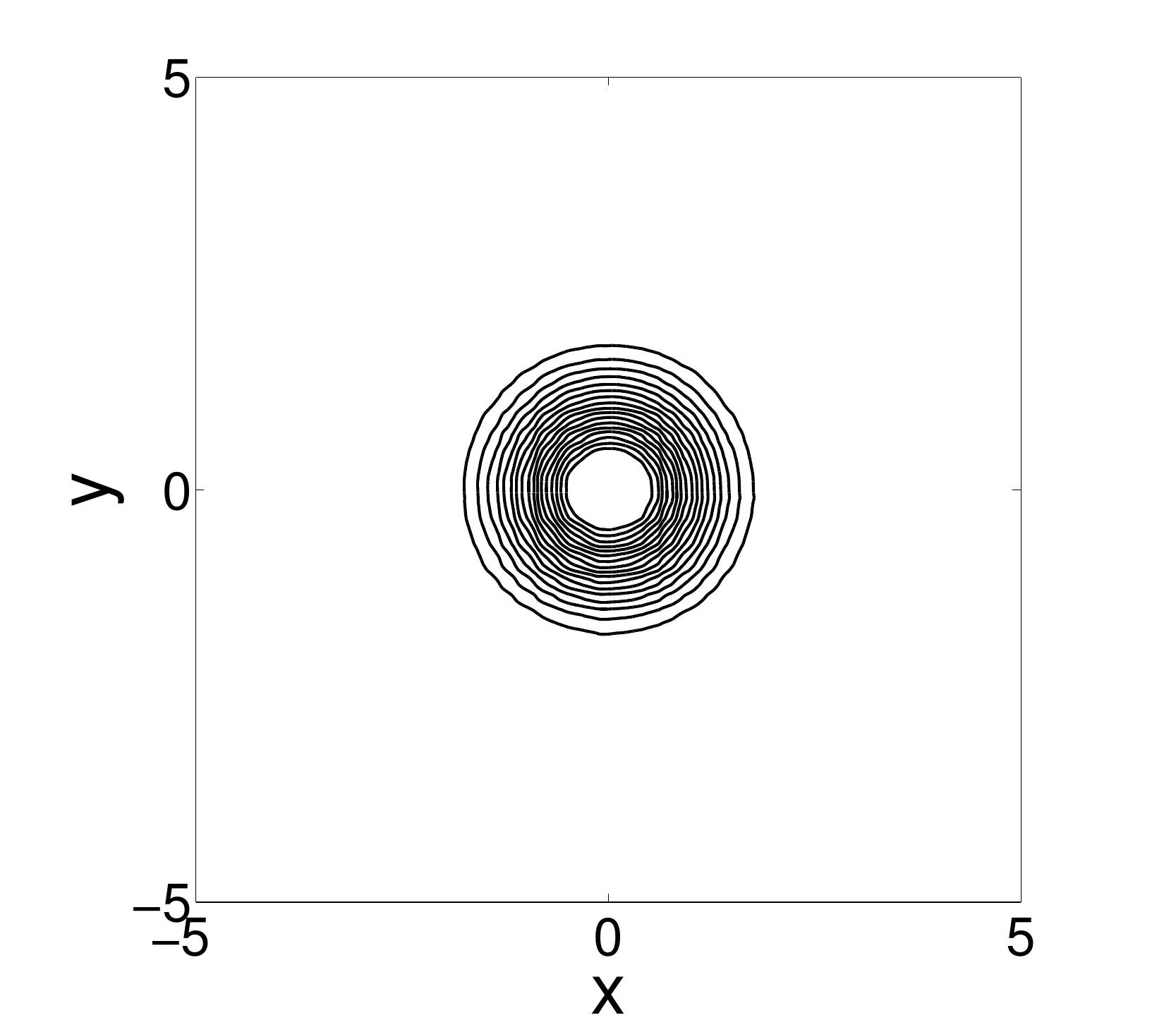}
\vglue -0.05 truein
\caption{Test (b)}
\label{fg:sec_example_vortex_lim}
\end{subfigure}
\begin{subfigure}[b]{0.32\textwidth}\centering
\includegraphics[trim=0.7in 0.0in 1.3in 0.5in, clip, width=\textwidth]{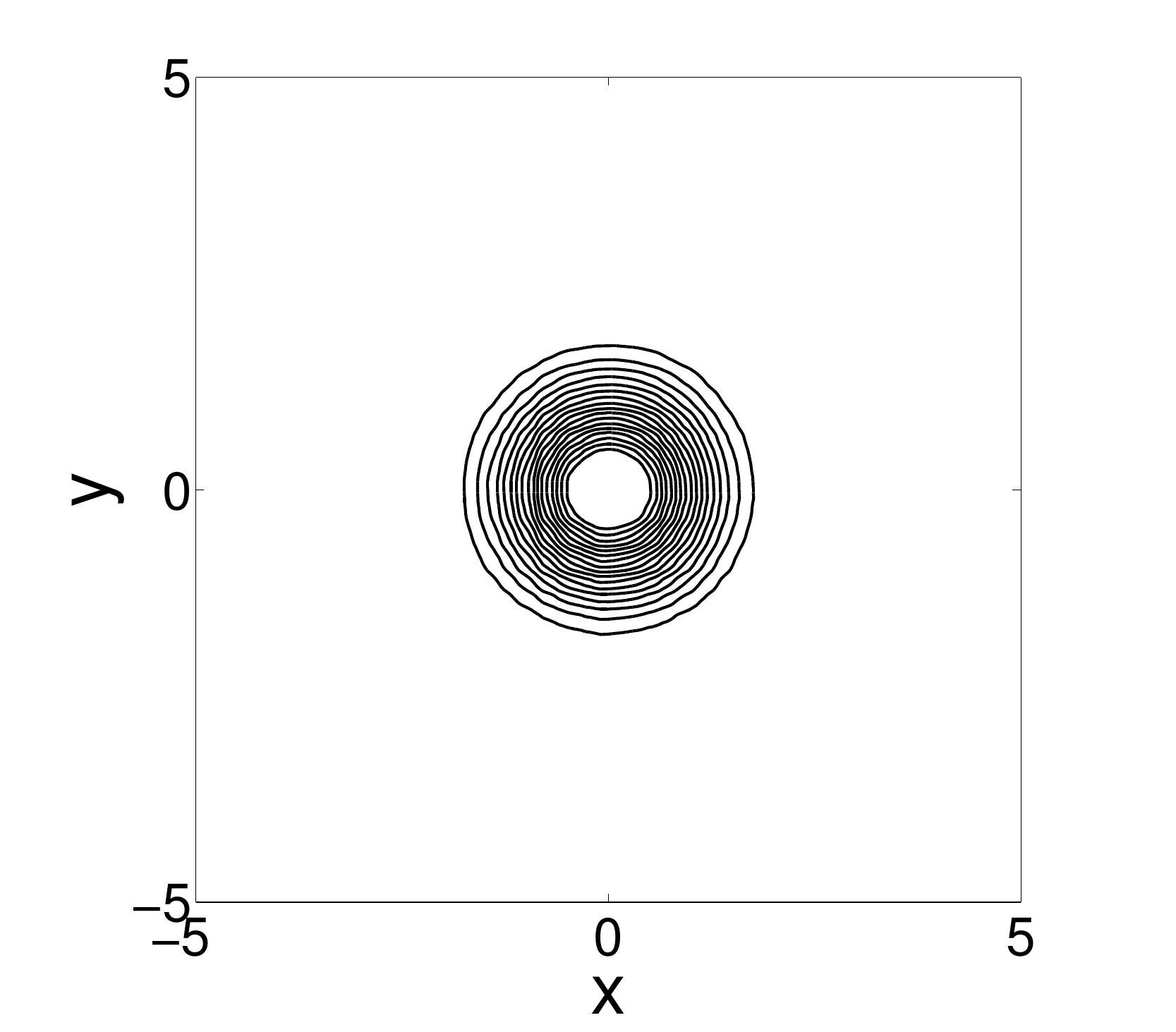}
\vglue -0.05 truein
\caption{Test (c)}
\label{fg:sec_example_vortex_cap}
\end{subfigure}
\vglue -0.2 truein
\caption{Pressure contours on the $160^2$ mesh ($r=0.3$) by: 
    (\ref{fg:sec_example_vortex_org}) MUSCL-MOL with $\phi^{\it \textrm{van Albada}}$, 
    (\ref{fg:sec_example_vortex_lim}) MUSCL-MOL with $\phi^{\it \textrm{van Albada}}_{A,B}$, 
    (\ref{fg:sec_example_vortex_cap}) capacity-form differencing} 
\label{fg:sec_example_vortex}
\end{figure}

These results confirm that the enhanced limiter greatly improves the accuracy of the MUSCL-MOL on highly non-uniform grids;
and the numerical errors are almost $r$-independent.
The results also suggest that the capacity-form differencing method is formally second-order accurate on these highly non-uniform grids.

\subsection{Shock-bubble interaction}
\label{sec:example_bubble}
The shock-bubble interaction problem~\cite{MCada:2009a} solves the Euler equations to simulate the interaction between a moving shock and a low-density bubble, indicated by Figure~\ref{fg:sec_example_bubble_setup}.
Because of the symmetry of the problem, only the upper half of the domain is used in computation.
The typical density at $T=0.4$ is shown in Figure~\ref{fg:sec_example_bubble_den}, which is computed using MUSCL-MOL and a uniform $340\times100$ grid and the {\it van Albada} limiter.
In this figure, the lower half of the data is obtained by mirroring the upper half.
\begin{figure}\centering
\begin{subfigure}[b]{.48\textwidth}\centering
    \begin{tikzpicture}[line width=1.6]
        \draw (-0.3,-1.5) rectangle (4.8,1.5);
        \draw [line width=0.8] (0.0,-1.5) -- (0.0,1.5);
        \draw [line width=0.8] (0.9,0.0) circle (0.6);
        \draw (-0.3,-1.5) node [left]  {\tiny $-0.5$};
        \draw (-0.3,1.5)  node [left]  {\tiny $0.5$};
        \draw (-0.3,-1.5) node [below] {\tiny $-0.1$};
        \draw (0.0,1.5)   node [above] {\tiny $0.0$};
        \draw (4.8,-1.5)  node [below] {\tiny $1.6$};
        \draw (2.4, 0.25) node [right] {\tiny $\rho = 1.0$};
        \draw (2.4, 0.0) node [right] {\tiny $u = 0.0$};
        \draw (2.4,-0.25) node [right] {\tiny $p = 1.0$};
        \draw (2.25,-1.5) node [below] {\tiny wall};
        \draw (2.25, 1.5) node [above] {\tiny wall};
        \node [rotate=90] at (-0.5,0.0) {\tiny in-flow};
        \node [rotate=90] at (5.0,0.0) {\tiny out-flow};
        \draw (0.35,0.25)  node [right] {\tiny $\rho = 0.1$};
        \draw (0.35,0.0)   node [right] {\tiny $u = 0.0$};
        \draw (0.35,-0.25) node [right] {\tiny $p = 1.0$};
        \draw [densely dashed, line width=0.8] (0.9,-0.6) -- (0.9,-1.5);
        \draw [densely dashed, line width=0.8] (1.5, 0.0) -- (1.5,-1.5);
        \draw (0.9,-1.5) node [below] {\tiny $0.3$};
        \draw (1.5,-1.5) node [below] {\tiny $0.5$};
        \node [rotate=90] at (-0.15,0.0) {\tiny $\rho=3.81, u=2.85, p=10$};
        \draw [color=white] (0.9,-2.0) -- (1.5,-2.0);
    \end{tikzpicture}
    \vglue -0.05 truein
    \caption{Initial problem setup: $v\equiv0.0$}
    \label{fg:sec_example_bubble_setup}
\end{subfigure}
\begin{subfigure}[b]{.48\textwidth}\centering
    \includegraphics[trim=1.0in 0.1in 1.0in 0.3in, clip, width=\textwidth]{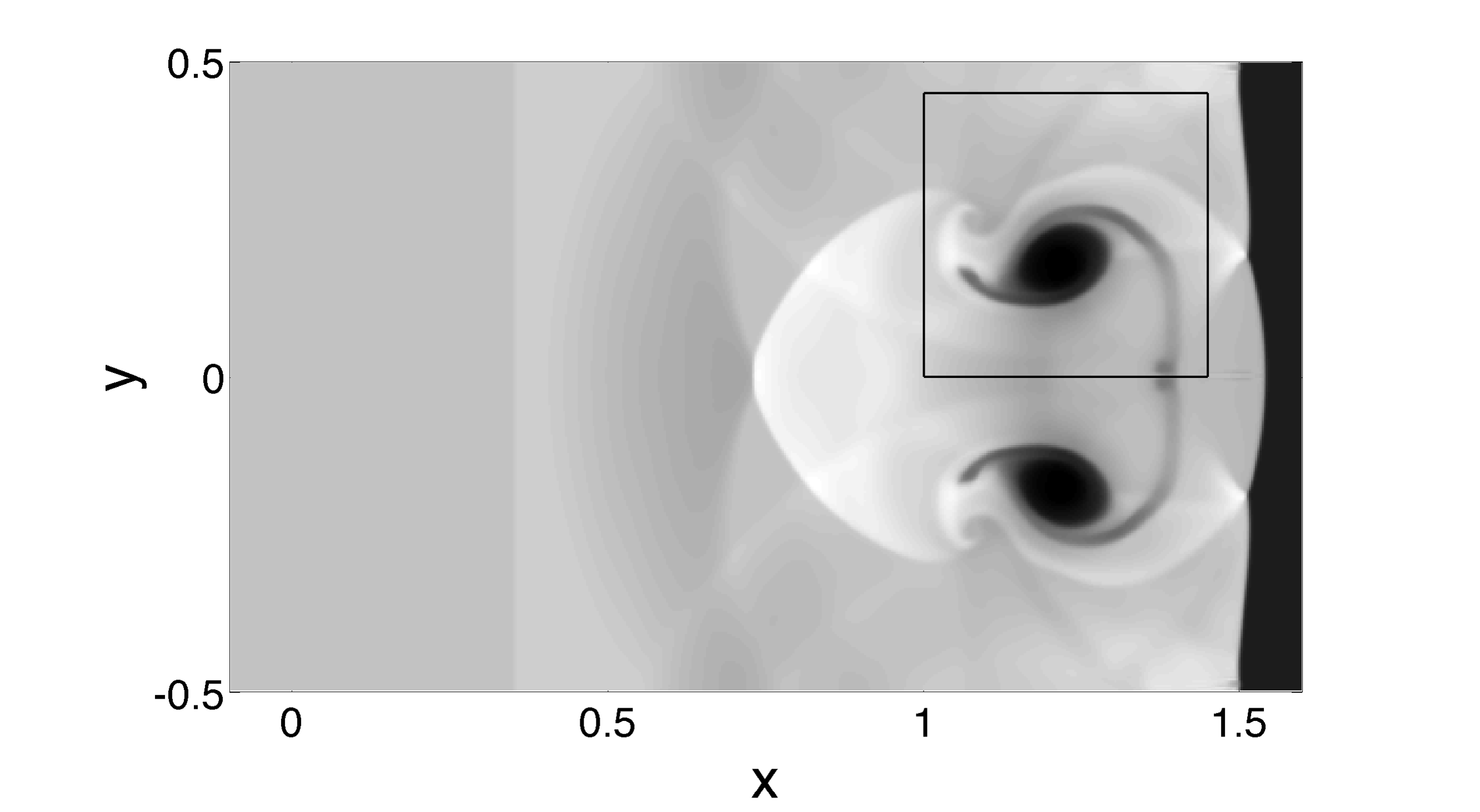}
    \vglue -0.05 truein
    \caption{Density at $T=0.4$}
    \label{fg:sec_example_bubble_den}
\end{subfigure}
\vglue -0.2 truein
\caption{Problem setup and density at $T=0.4$ on a uniform $340\times100$ grid}
\label{fg:sec_example_bubble}
\end{figure}

The densities in the whole computational domain and in the region indicated by the black box in Figure~\ref{fg:sec_example_bubble_den} computed by different methods using {\it van Leer} limiters are shown in the left column of Figure~\ref{fg:sec_example_bubble_den_glob} and Figure~\ref{fg:sec_example_bubble_den_loc_vl}, respectively.
Similar results computed using the {\it van Albada} limiters are plotted in the right column of Figure~\ref{fg:sec_example_bubble_den_glob} and Figure~\ref{fg:sec_example_bubble_den_loc_va} for the global views and local views, respectively.
\begin{figure}\centering
\begin{subfigure}[b]{.48\textwidth}\centering
    \includegraphics[trim=0.8in 1.4in 1.1in 2.4in, clip, width=\textwidth]{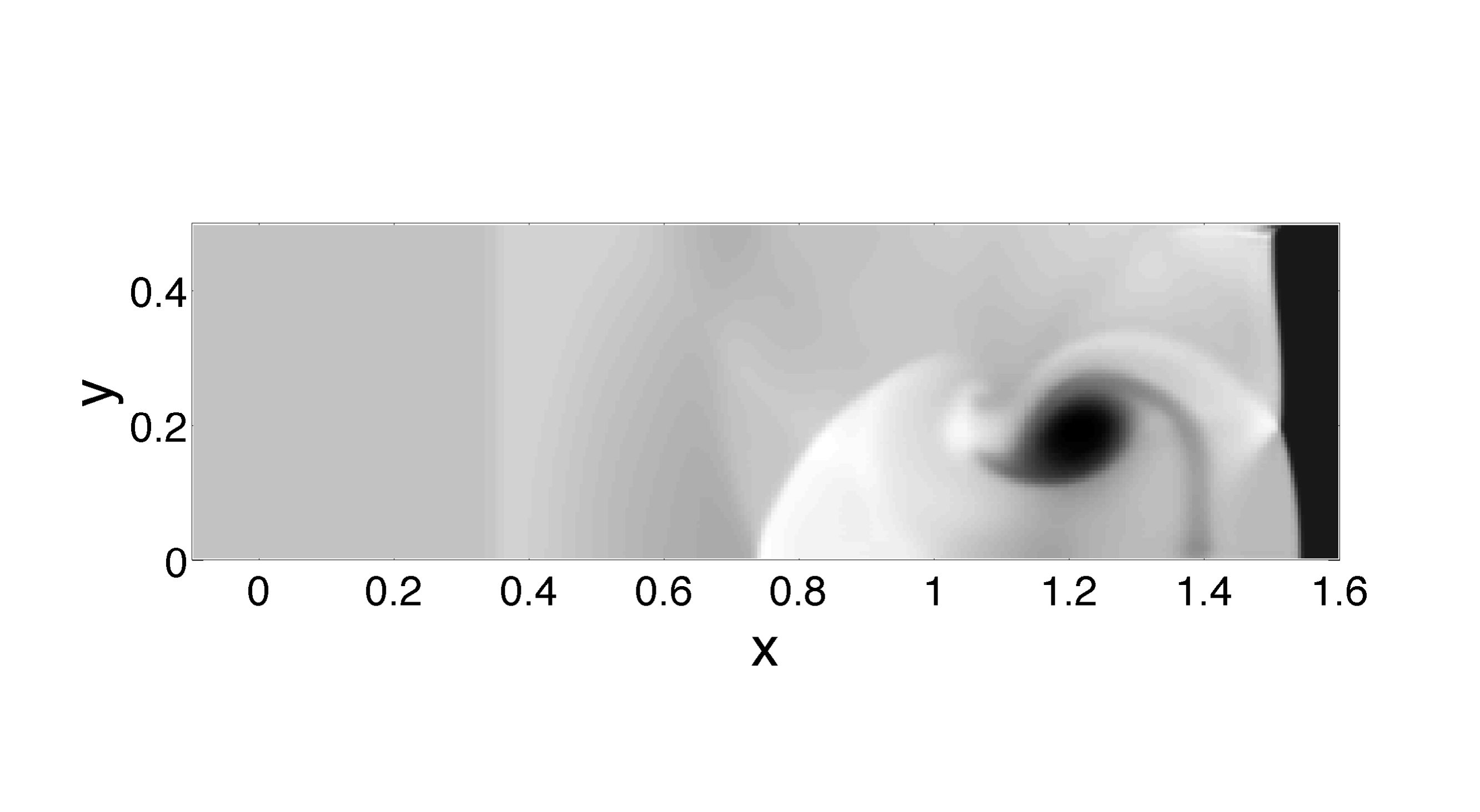}
    \vglue -0.1 truein
    \caption{Test (a): $\phi^{\it \textrm{van Leer}}$}
    \label{fg:sec_example_bubble_den_glob_vl_org}
\end{subfigure}
\begin{subfigure}[b]{.48\textwidth}\centering
    \includegraphics[trim=0.8in 1.4in 1.1in 2.4in, clip, width=\textwidth]{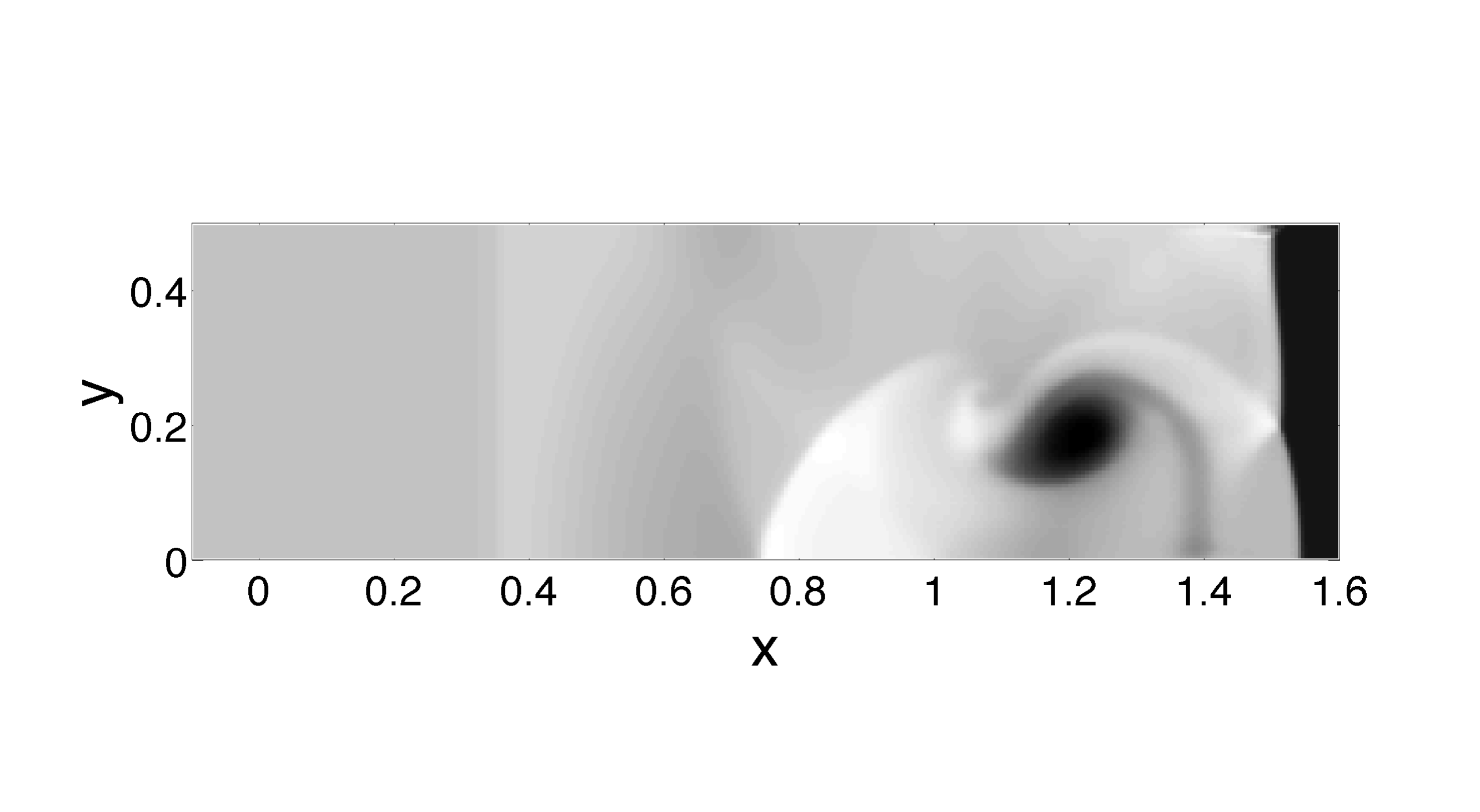}
    \vglue -0.1 truein
    \caption{Test (a): $\phi^{\it \textrm{van Albada}}$}
    \label{fg:sec_example_bubble_den_glob_va_org}
\end{subfigure}
\begin{subfigure}[b]{.48\textwidth}\centering
    \includegraphics[trim=0.8in 1.4in 1.1in 2.4in, clip, width=\textwidth]{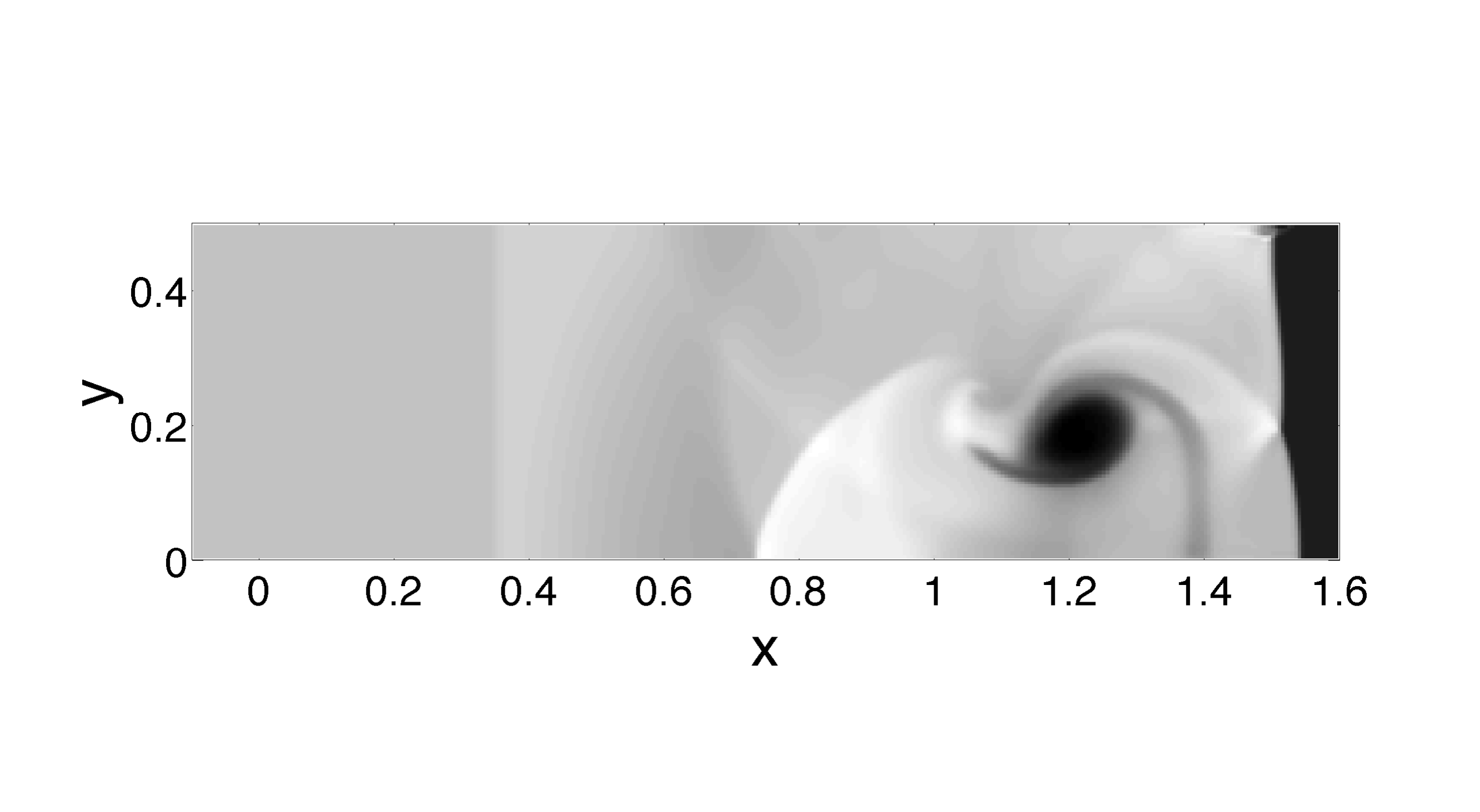}
    \vglue -0.1 truein
    \caption{Test (b): $\phi^{\it \textrm{van Leer}}_{A,B}$}
    \label{fg:sec_example_bubble_den_glob_vl_lim}
\end{subfigure}
\begin{subfigure}[b]{.48\textwidth}\centering
    \includegraphics[trim=0.8in 1.4in 1.1in 2.4in, clip, width=\textwidth]{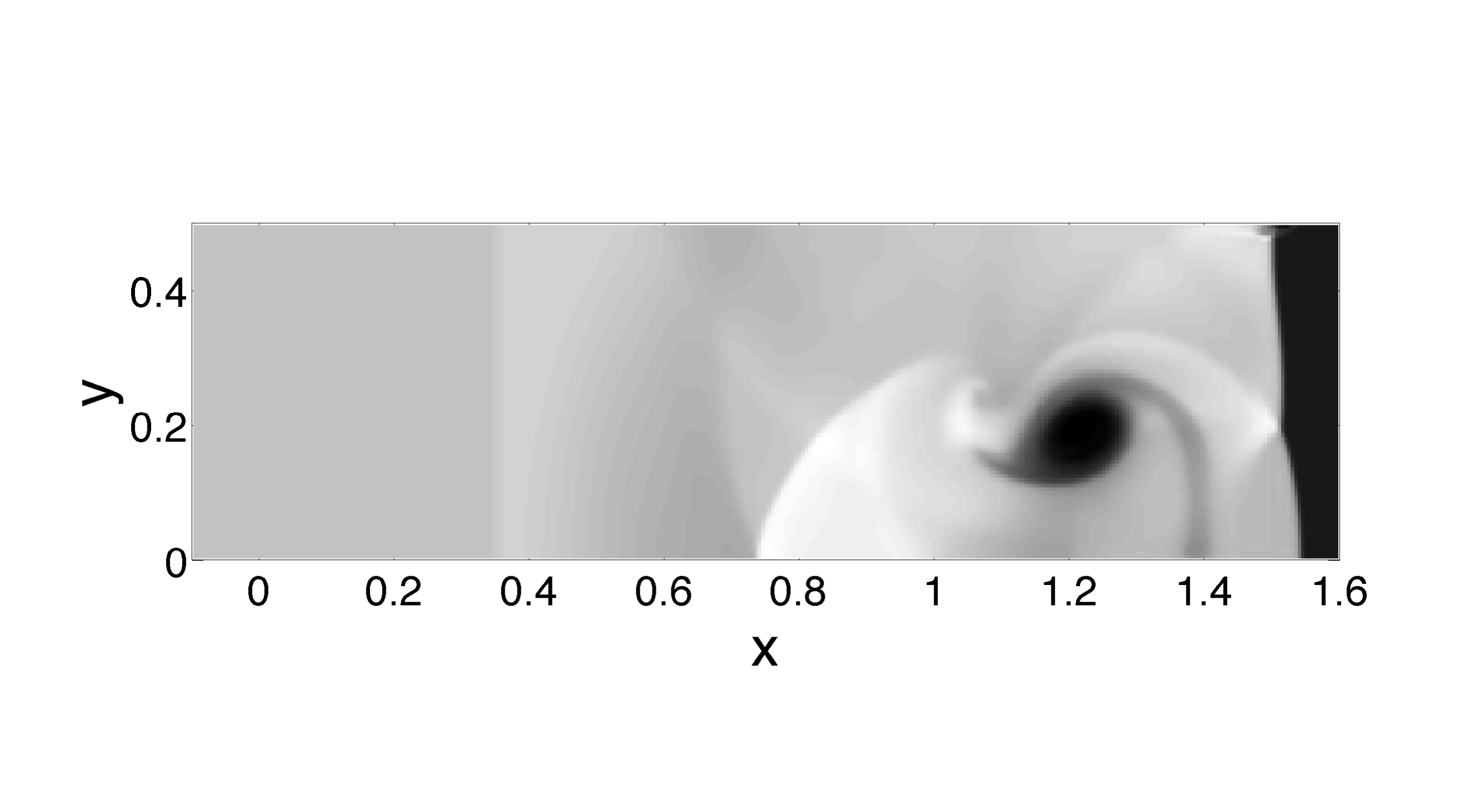}
    \vglue -0.1 truein
    \caption{Test (b): $\phi^{\it \textrm{van Albada}}_{A,B}$}
    \label{fg:sec_example_bubble_den_glob_va_lim}
\end{subfigure}
\begin{subfigure}[b]{.48\textwidth}\centering
    \includegraphics[trim=0.8in 1.4in 1.1in 2.4in, clip, width=\textwidth]{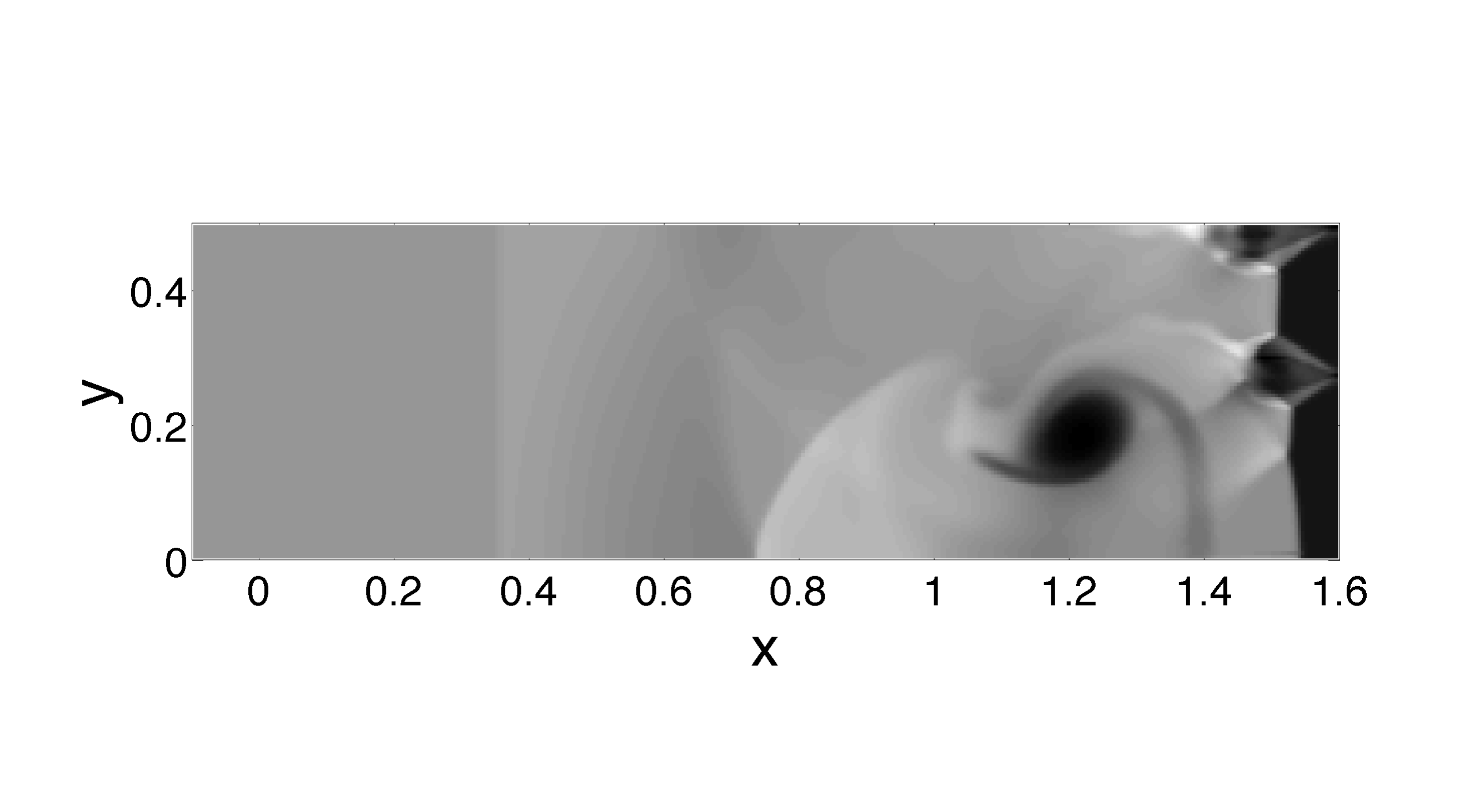}
    \vglue -0.1 truein
    \caption{Test (c): $\phi^{\it \textrm{van Leer}}$}
    \label{fg:sec_example_bubble_den_glob_vl_cap}
\end{subfigure}
\begin{subfigure}[b]{.48\textwidth}\centering
    \includegraphics[trim=0.8in 1.4in 1.1in 2.4in, clip, width=\textwidth]{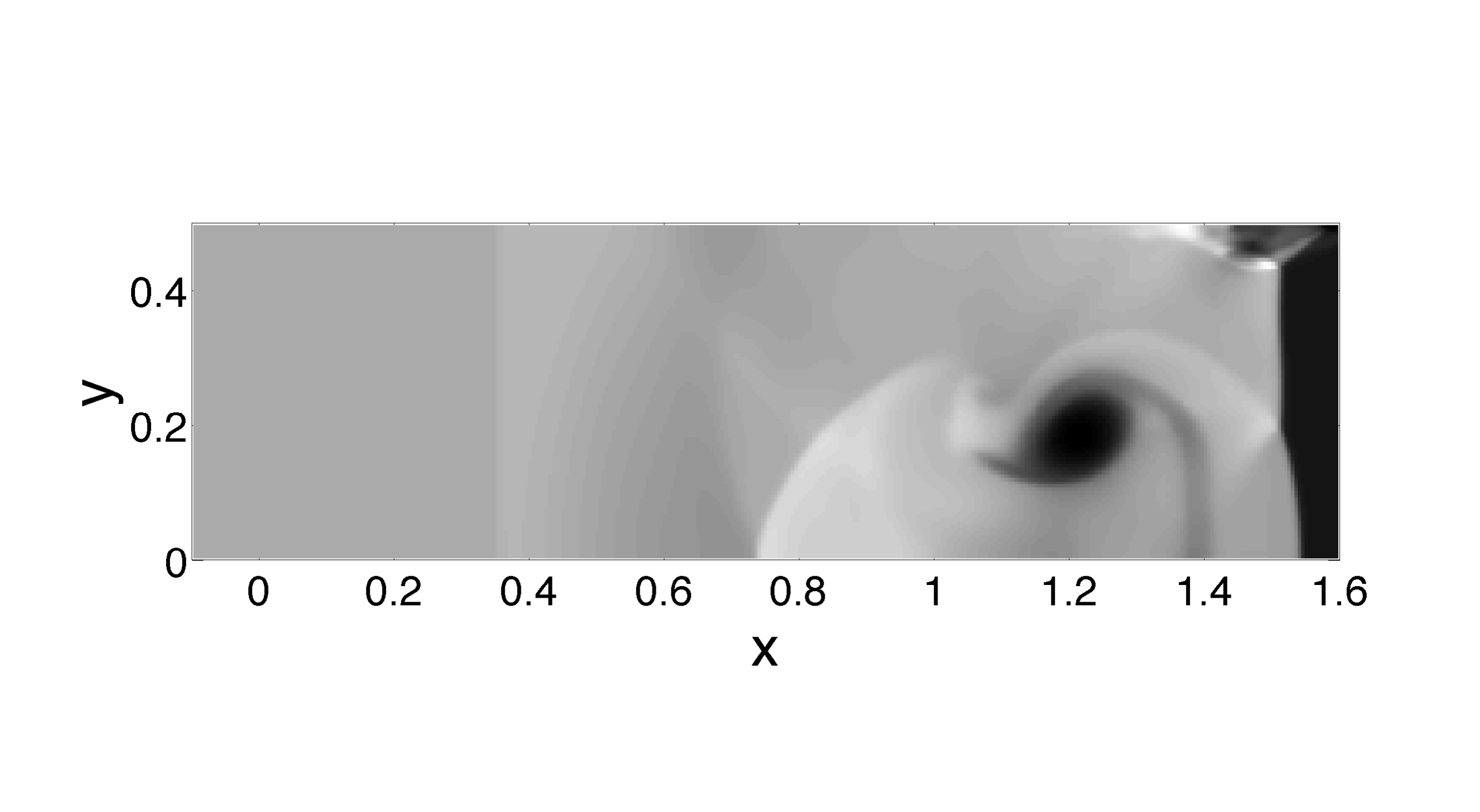}
    \vglue -0.1 truein
    \caption{Test (c): $\phi^{\it \textrm{van Albada}}$}
    \label{fg:sec_example_bubble_den_glob_va_cap}
\end{subfigure}
\vglue -0.2 truein
\caption{Densities at $T=0.4$ on a $340\times100$ mesh ($r=0.3$): 
(left column) {\it van Leer} limiters, (right column) {\it van Albada} limiters; 
(\ref{fg:sec_example_bubble_den_glob_vl_org}--\ref{fg:sec_example_bubble_den_glob_va_org}) MUSCL-MOL and conventional limiters,
(\ref{fg:sec_example_bubble_den_glob_vl_lim}--\ref{fg:sec_example_bubble_den_glob_va_lim}) MUSCL-MOL and enhanced limiters,
(\ref{fg:sec_example_bubble_den_glob_vl_cap}--\ref{fg:sec_example_bubble_den_glob_va_cap}) capacity-form differencing and conventional limiters
}
\label{fg:sec_example_bubble_den_glob}
\end{figure}
\begin{figure}\centering
\begin{subfigure}[b]{0.32\textwidth}\centering
\includegraphics[trim=0.7in 0.0in 1.3in 0.5in, clip, width=\textwidth]{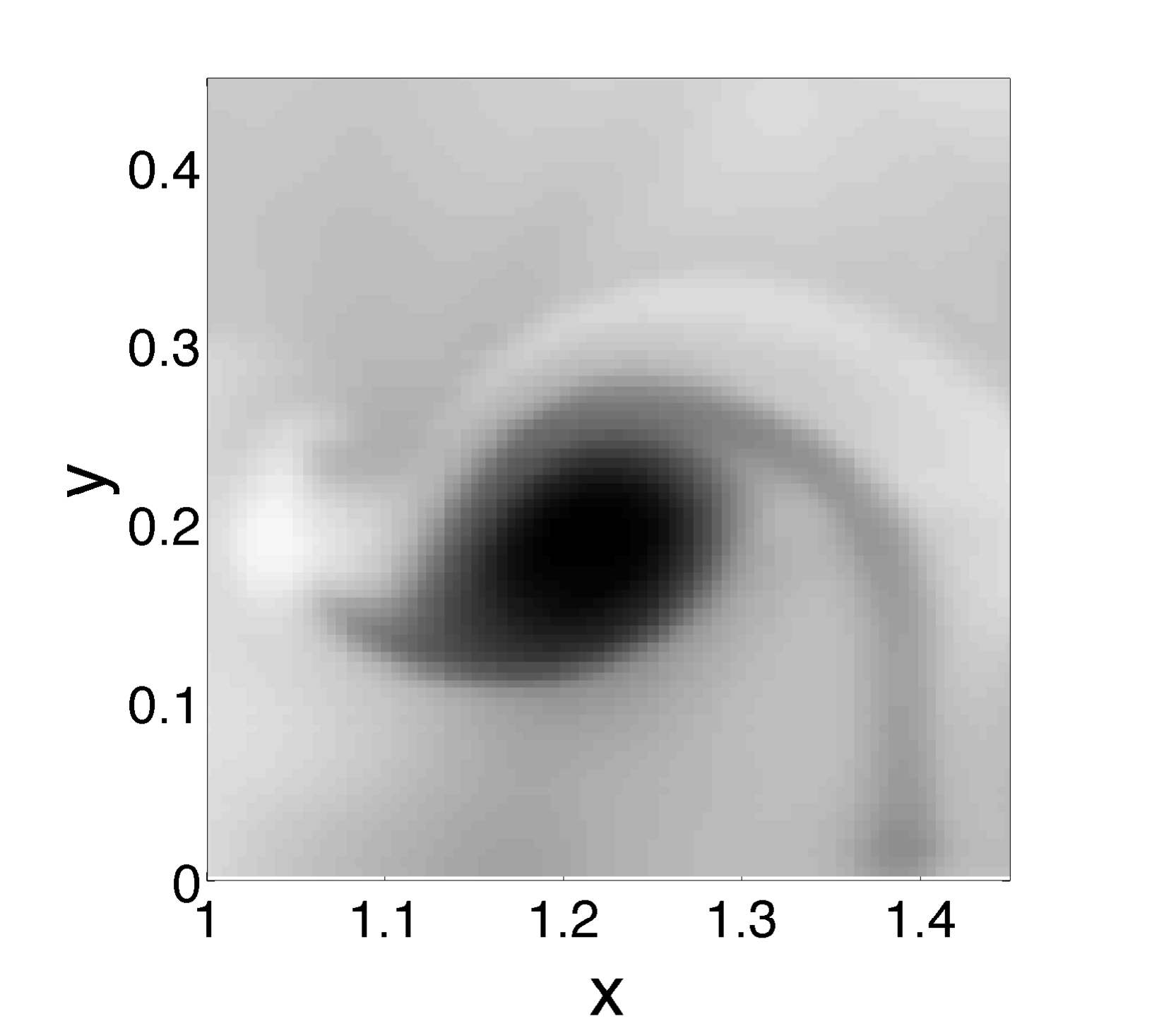}
\vglue -0.05 truein
\caption{Test (a)}
\label{fg:sec_example_bubble_den_loc_vl_org}
\end{subfigure}
\begin{subfigure}[b]{0.32\textwidth}\centering
\includegraphics[trim=0.7in 0.0in 1.3in 0.5in, clip, width=\textwidth]{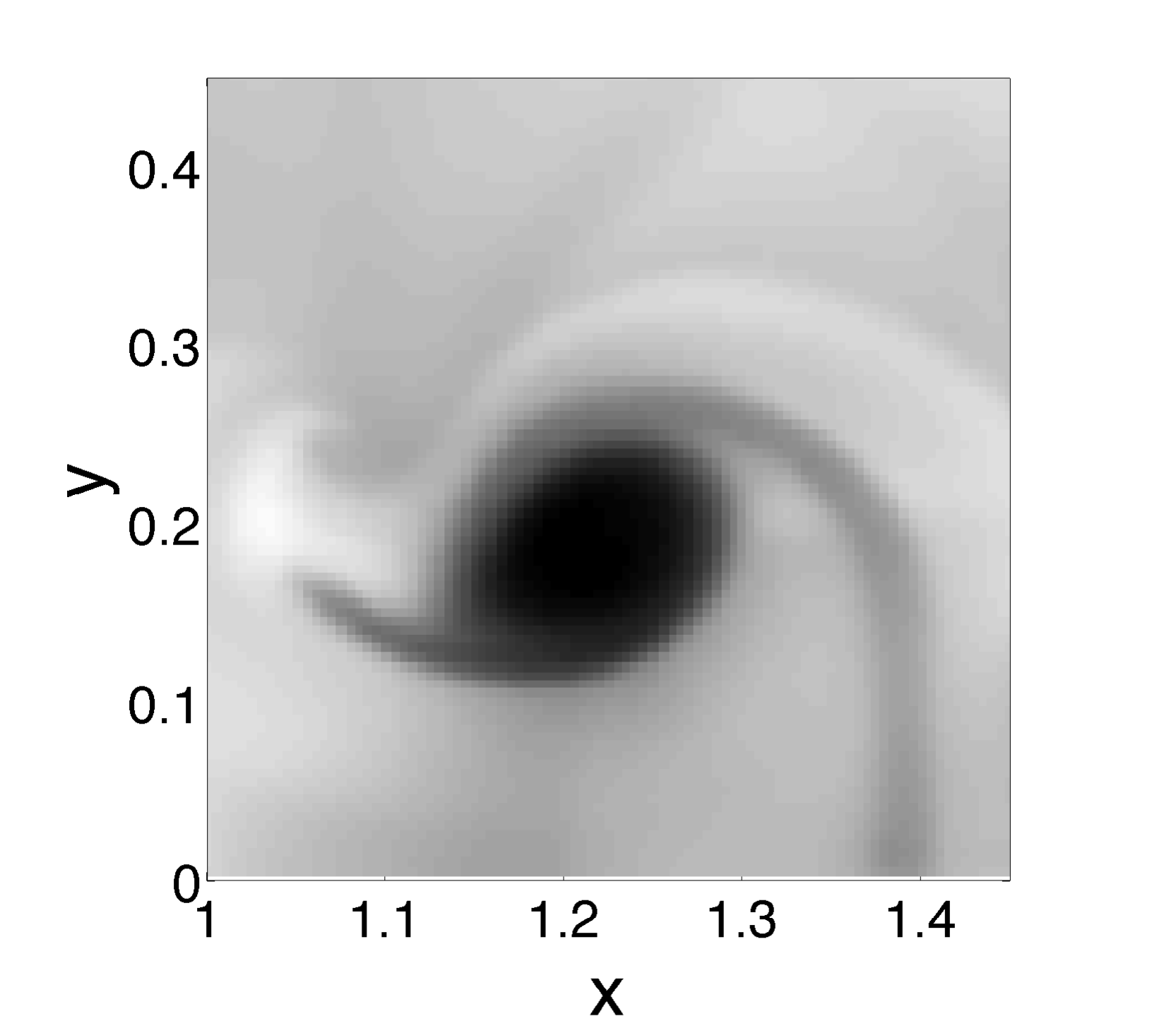}
\vglue -0.05 truein
\caption{Test (b)}
\label{fg:sec_example_bubble_den_loc_vl_lim}
\end{subfigure}
\begin{subfigure}[b]{0.32\textwidth}\centering
\includegraphics[trim=0.7in 0.0in 1.3in 0.5in, clip, width=\textwidth]{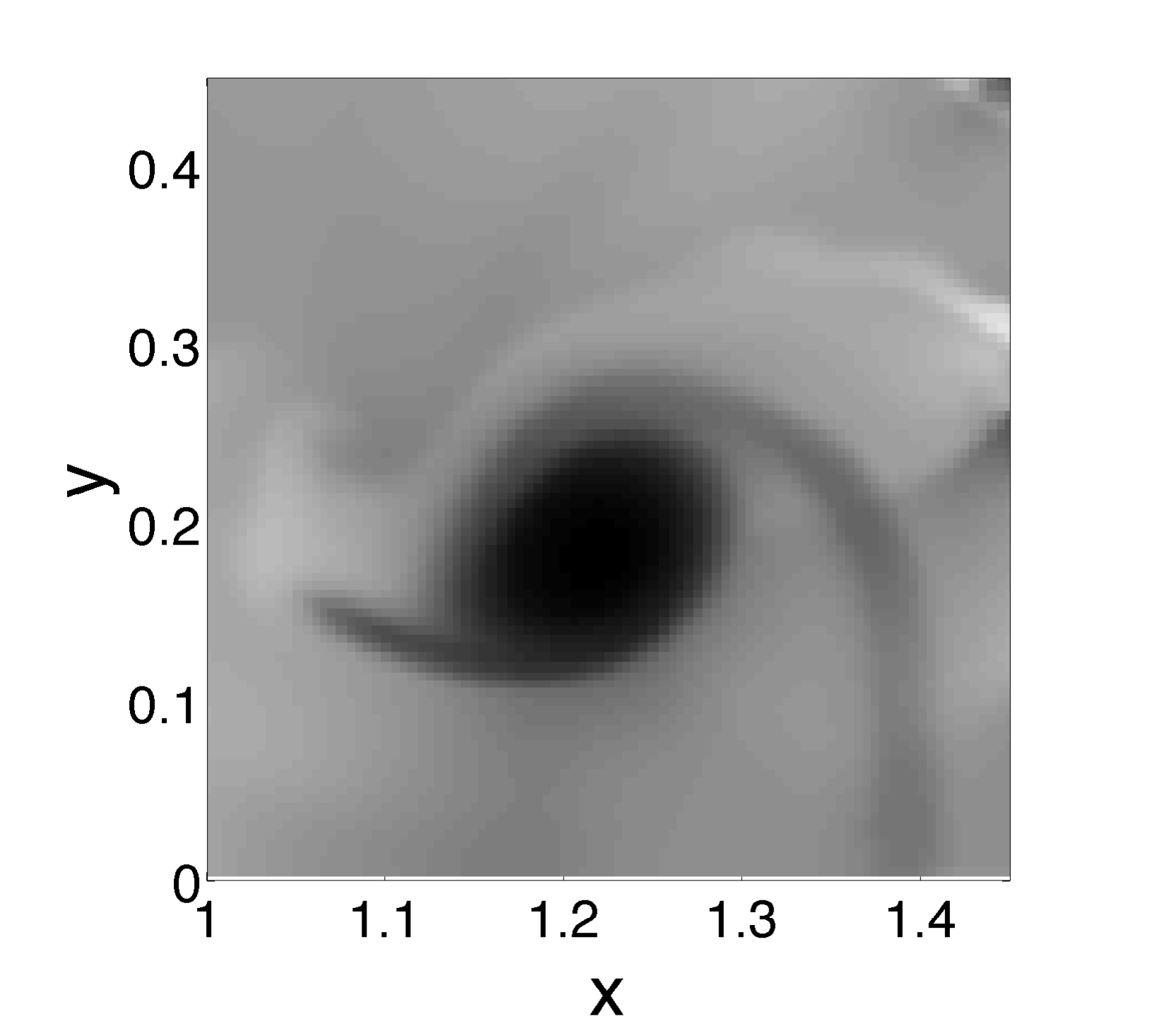}
\vglue -0.05 truein
\caption{Test (c)}
\label{fg:sec_example_bubble_den_loc_vl_cap}
\end{subfigure}
\vglue -0.2 truein
\caption{Local views of density at $T=0.4$ on a $340\times100$ mesh ($r=0.3$): 
    (\ref{fg:sec_example_bubble_den_loc_vl_org}) MUSCL-MOL and $\phi^{\it \textrm{van Leer}}$, 
    (\ref{fg:sec_example_bubble_den_loc_vl_lim}) MUSCL-MOL and $\phi^{\it \textrm{van Leer}}_{A,B}$,
and (\ref{fg:sec_example_bubble_den_loc_vl_cap}) capacity-form differencing and $\phi^{\it \textrm{van Leer}}$}
\label{fg:sec_example_bubble_den_loc_vl}
\end{figure}
\begin{figure}\centering
\begin{subfigure}[b]{0.32\textwidth}\centering
\includegraphics[trim=0.7in 0.0in 1.3in 0.5in, clip, width=\textwidth]{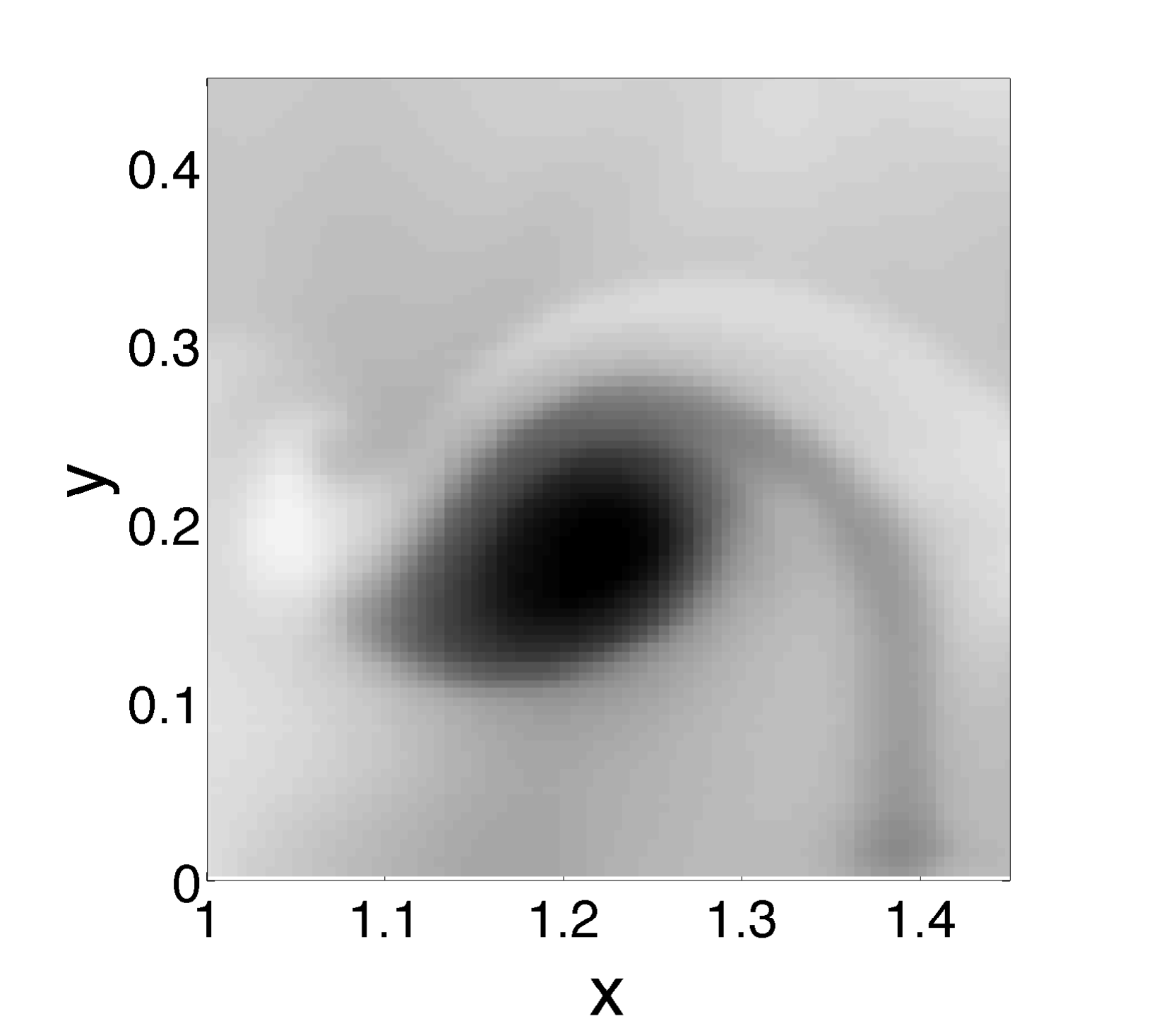}
\vglue -0.05 truein
\caption{Test (a)}
\label{fg:sec_example_bubble_den_loc_va_org}
\end{subfigure}
\begin{subfigure}[b]{0.32\textwidth}\centering
\includegraphics[trim=0.7in 0.0in 1.3in 0.5in, clip, width=\textwidth]{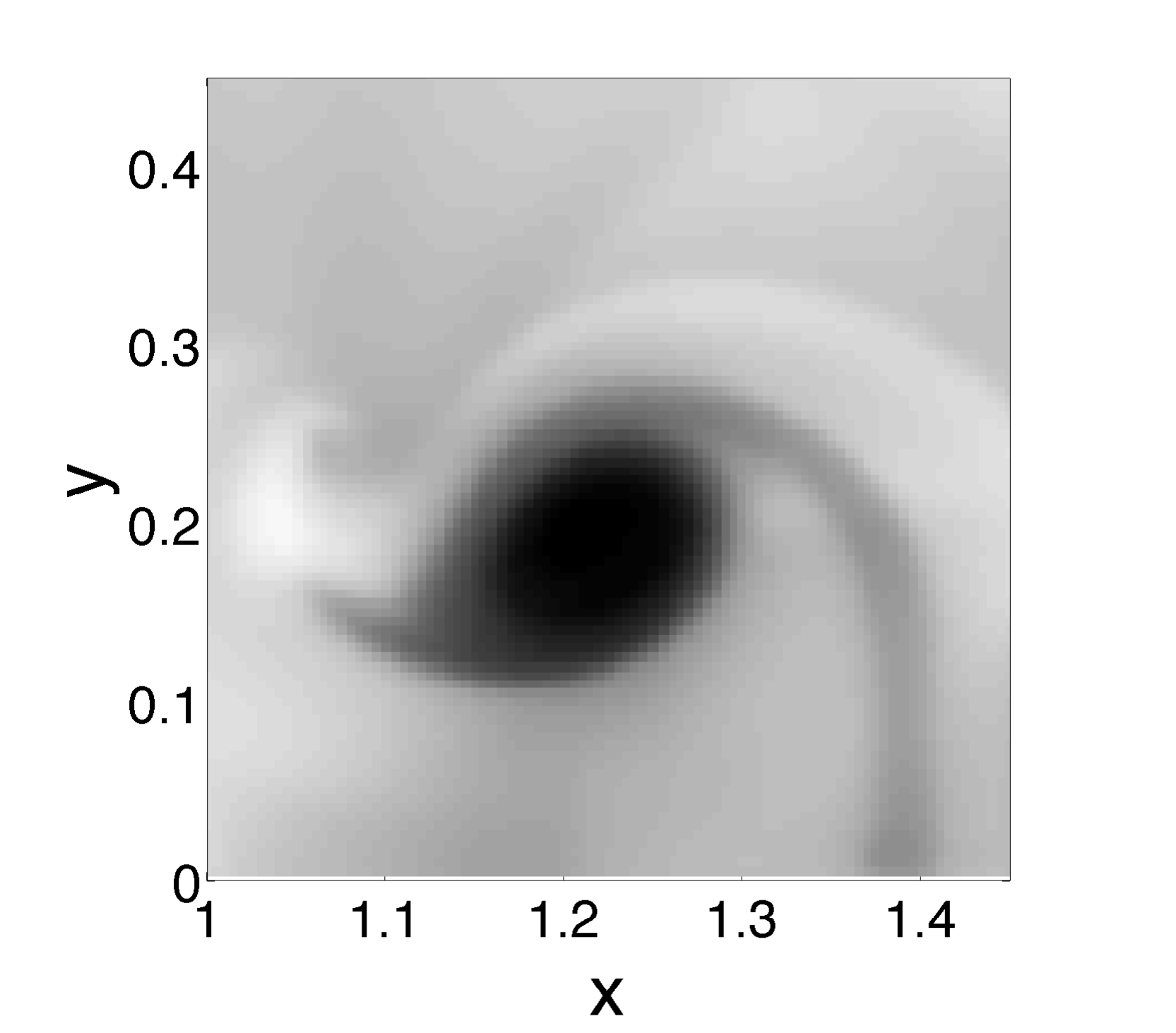}
\vglue -0.05 truein
\caption{Test (b)}
\label{fg:sec_example_bubble_den_loc_va_lim}
\end{subfigure}
\begin{subfigure}[b]{0.32\textwidth}\centering
\includegraphics[trim=0.7in 0.0in 1.3in 0.5in, clip, width=\textwidth]{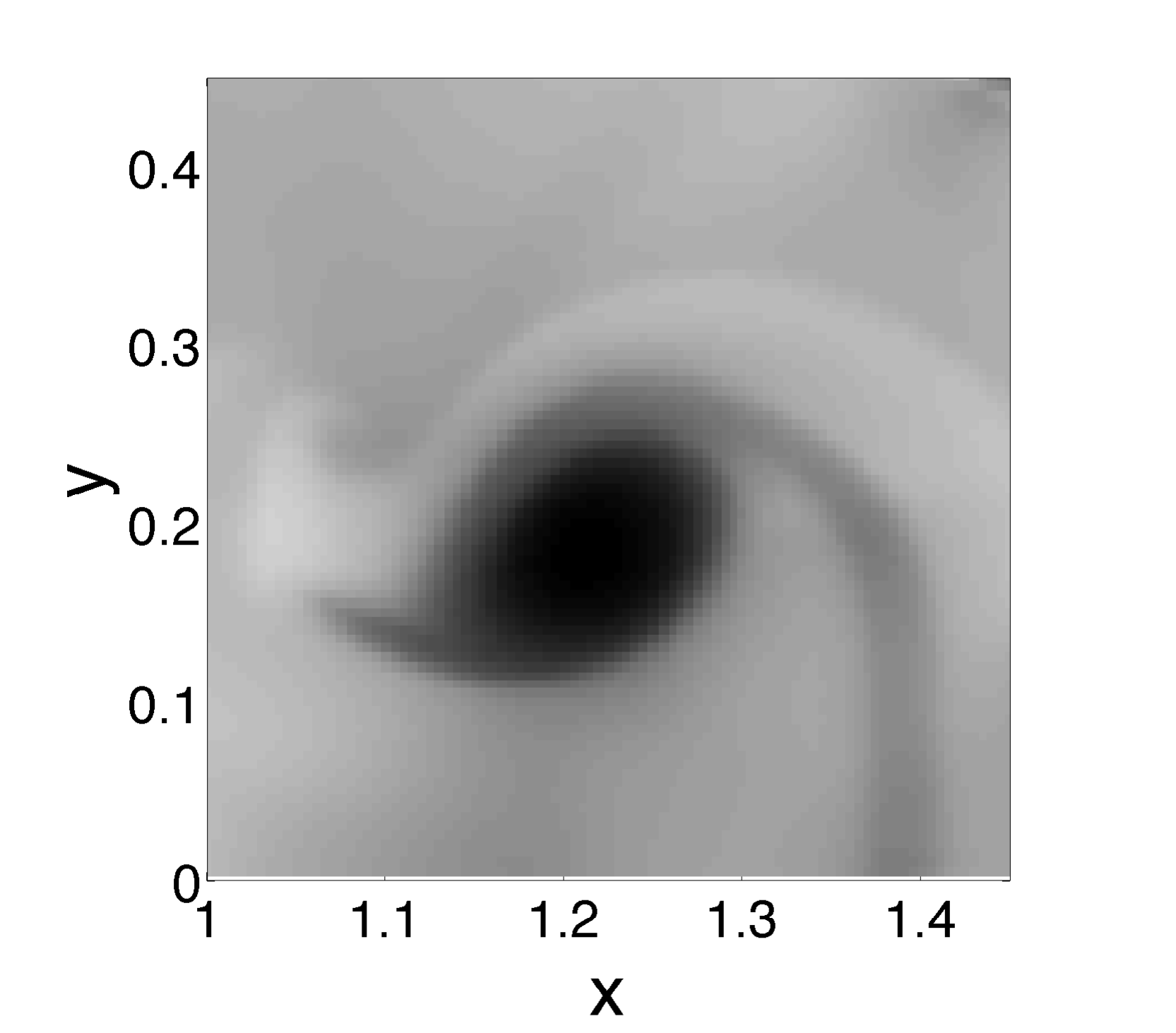}
\vglue -0.05 truein
\caption{Test (c)}
\label{fg:sec_example_bubble_den_loc_va_cap}
\end{subfigure}
\vglue -0.2 truein
\caption{Local views of density at $T=0.4$ on a $340\times100$ mesh ($r=0.3$): 
    (\ref{fg:sec_example_bubble_den_loc_va_org}) MUSCL-MOL and $\phi^{\it \textrm{van Albada}}$, 
    (\ref{fg:sec_example_bubble_den_loc_va_lim}) MUSCL-MOL and $\phi^{\it \textrm{van Albada}}_{A,B}$,
and (\ref{fg:sec_example_bubble_den_loc_va_cap}) capacity-form differencing and $\phi^{\it \textrm{van Albada}}$}
\label{fg:sec_example_bubble_den_loc_va}
\end{figure}

On the one hand, by comparing the local views (Figures~\ref{fg:sec_example_bubble_den_loc_vl} and~\ref{fg:sec_example_bubble_den_loc_va}) with the black box in the reference solution (Figure~\ref{fg:sec_example_bubble_den}), one may conclude that the MUSCL-MOL using enhanced limiters are as accurate as the capacity-form differencing, which are both more accurate than MUSCL-MOL with conventional limiters.
On the other hand, the upper-right corners of the global solutions (Figure~\ref{fg:sec_example_bubble_den_glob}) show that the capacity-form differencing with conventional limiters tend to be unstable or unphysical on highly irregular grids, especially if the {\it van Leer} limiter is used.
Note that unlike the 1D example in Section~\ref{sec:alt_consist}, the {\it van Albada} limiter does not save the capacity-form differencing from being unstable (Figure~\ref{fg:sec_example_bubble_den_glob_va_cap}).

\section{Conclusions}
\label{sec:concl}
This paper raises the issue of limitations of using conventional slope limiter functions in finite volume methods using highly non-uniform rectilinear grids.
In particular, depending on particular FVM implementations, one may lose either second-order accuracy or TVD stability in these situations, especially when the limiter functions are smooth.
Using the MUSCL in space and method of lines in time, this paper analyzes and enhances the limiter functions on general 1D grids using an extension of the REP procedure.
These enhanced limiter functions, including the {\it van Leer} and the {\it van Albada} limiters, satisfy sufficient conditions for a limiter function to lead to a formally second-order accurate, TVD stable, and symmetry-preserving methods.
Their numerical performances are assessed by solving various 1D and 2D benchmark problems; 
the results are compared to alternative FVM strategies to solve these problems on non-uniform grids.

\section*{Acknowledgments}
The author thanks the support by a Stanford Graduate Fellowship.
The author also gratefully thanks Professor Charbel Farhat for reviewing the manuscript and his insightful suggestions.

\bibliographystyle{siam}

\appendix

\section{Computing convergence rates on non-uniform meshes}
\label{app:rates}
Given two randomly generated irregular meshes of the domain $[a,b]$ 
\begin{align*}
\textrm{Mesh I:}\quad& a=x_{1/2}^I<x_{3/2}^I<\cdots<x_{(N_1-1)+1/2}^I<x_{N_1+1/2}^I=b \\
\textrm{Mesh II:}\quad& a=x_{1/2}^{II}<x_{3/2}^{II}<\cdots<x_{(N_2-1)+1/2}^{II}<x_{N_2+1/2}^{II}=b
\end{align*}
let the reference cell-sizes be $h^I = (b-a)/N_1$ and $h^{II} = (b-a)/N_2$, respectively.

Supposing that the dependent variable is a scalar $u$, define $u^{I}_i$ and $u^{II}_i$ to be the numerical solutions at $T$ computed by the same scheme on the two meshes, respectively.
Letting $u^{\textrm{ref}}(x,T)$ be the reference solution, the $L_1$ norm of the computed errors are calculated as
\begin{displaymath}
E^{I,II}  = \sum_{i=1}^{N_1}(x_{i+1/2}^{I,II}-x_{i-1/2}^{I,II})\left|u^{I,II}_i-u^{\textrm{ref}}(x_i^{I,II},T)\right|
\end{displaymath}
Then the convergence rates is estimated by
\begin{equation}\label{eq:app_rates_rate}
R = \frac{\log(E^{I})-\log(E^{II})}{\log(h^{I})-\log(h^{II})}
\end{equation}
This equation has wide usage for estimating convergence rates on uniform meshes, but its validity for the same purpose on randomly generated non-uniform meshes is open for discussion. 
Nevertheless, the numerical results presented in Section~\ref{sec:example} justifies this choice for irregular meshes generated in the way described in Section~\ref{sec:case}.

\section{The conventional {\it \textbf{van Leer}} limiter}
\label{app:vanleer}
Consider the next general strategy to construct the alternative unlimited slopes and smoothness monitors
\begin{equation}\label{eq:vanleer_alt}
\theta_i^{alt} = \alpha_i\theta_i^p,\quad
D_xu_i = \beta_i\theta_i^q\frac{u_{i+1}-u_i}{\Delta x_i}
\end{equation}
in which $\theta_i$ is the smoothness monitor~(\ref{eq:sec_case_monitor}), $\alpha_i$ and $\beta_i$ are two positive numbers that only depends on the local cell sizes.
The other two numbers are $p\in\{-1,1\}$ and $q\in\{0,1\}$, so that (\ref{eq:vanleer_alt}) allows biased differencing in both directions.

Thus by setting $\alpha_i=\beta_i\equiv1$ and using $p=1, q=0$, one obtains the strategy chosen in this paper;
whereas setting $\theta_i\equiv1$, $p=1$, $q=0$, and $\beta_i = 2\Delta x_i/(\Delta x_i+\Delta x_{i+1})$, one has the consistent numerical slope~(\ref{eq:alt_consist_slpe}).
For the general formula~(\ref{eq:vanleer_alt}), the next theorem holds for the conventional {\it van Leer} limiter.
\begin{theorem}\label{thm:app_vanleer}
Using (\ref{eq:vanleer_alt}) and the conventional limiter function $\phi^{\it \textrm{van Leer}}$, 
there exists a function $\phi^{\it \textrm{van Leer}}_{i}$ for each $i$ such that the limited numerical slopes satisfy
\begin{equation}\label{eq:vanleer_equiv}
\sigma_i \eqdef \phi^{\it \textrm{van Leer}}(\theta_i^{alt})D_xu_i = \phi_{i}^{\it \textrm{van Leer}}(\theta_i)\frac{u_{i+1}-u_i}{\Delta x_i}
\end{equation}
These $\phi_i^{\it \textrm{van Leer}}$ cannot satisfy both (\ref{eq:sec_math_2nd}) and (\ref{eq:sec_math_tvd}) for arbitrary non-uniform grids.
\end{theorem}

Before proving the theorem, the idea is that any alternative strategy that may be written as (\ref{eq:vanleer_alt}), one can construct a ``modified'' limiter function such that the analysis of Section~\ref{sec:math} fits.
In particular, the ``modified'' limiter function corresponding to $\phi^{\it \textrm{van Leer}}$ cannot satisfy both the (\ref{eq:sec_math_2nd}) and (\ref{eq:sec_math_tvd}) simultaneously for arbitrary grids: 
thus there always be some irregular grids, such that this alternative strategy leads to the loss of either second-order accuracy or the TVD stability.
\begin{proof}
For simplicity, $\theta_i>0$ is always supposed in the proof.
The general formula~(\ref{eq:vanleer_alt}) together with conventional {\it van Leer} limiter leads to the following slopes
\begin{equation}\label{eq:vanleer_alt_slpe}
\sigma_i = \phi^{\it \textrm{van Leer}}(\alpha_i\theta_i^p)\beta_i\theta_i^q\frac{u_{i+1}-u_i}{\Delta x_i}
\end{equation}
Thus (\ref{eq:vanleer_equiv}) is established by defining the ``modified'' limiters as
\begin{displaymath}
\phi_i = \beta_i\theta_i^q\phi^{\it \textrm{van Leer}}(\alpha_i\theta_i^p)
\end{displaymath}
Plugging in the explicit expression of $\phi^{\it \textrm{van Leer}}$, one has the ``modified'' limiter function
\begin{equation}\label{eq:vanleer_alt_lim}
\phi_i = \phi_i^{\it \textrm{van Leer}}(\theta_i);\qquad
\phi_i^{\it \textrm{van Leer}}(\theta) \eqdef \frac{\theta^s}{a+b\theta},\quad s\in\{0,1,2\}
\end{equation}
where $a = 1/(2\alpha_i\beta_i)$, $b = 1/(2\beta_i)$, $s=1+q$ if $p=1$, and $a = 1/(2\beta_i)$, $b=1/(2\alpha_i\beta_i)$, $s=q$ if $p=-1$.
Considering the two numbers $A=\frac{\Delta x_{i-1}+\Delta x_i}{\Delta x_i+\Delta x_{i+1}}$ and $B=\frac{2\Delta x_i}{\Delta x_i+\Delta x_{i+1}}$ instead of $\{\Delta x_j\}$, it is then sufficient to show that $\phi_i^{\it \textrm{van Leer}}$ cannot satisfy both (\ref{eq:sec_math_2nd}) and (\ref{eq:sec_math_tvd}) for all $A,B$ such that (\ref{eq:sec_math_a_b}) and (\ref{eq:sec_math_lim_a_b}) hold.

Indeed, suppose for some $a, b$, and $s$, $\phi_i$ of (\ref{eq:vanleer_alt_lim}) satisfies both (\ref{eq:sec_math_2nd}) and (\ref{eq:sec_math_tvd})
\begin{displaymath}
0\le\frac{\theta^s}{a+b\theta}\le2\min(1,\theta),\quad\forall\theta>0\quad\Rightarrow\quad
t=1\ \textrm{ and }\ a,b\ge\frac{1}{2}
\end{displaymath}
Setting $t=1$, $\phi_i^{\it \textrm{van Leer}}(A)=B$ leads to
\begin{displaymath}
\frac{A}{B} = 2a + 2b A \ge 1 + A
\end{displaymath}
which is, however, not true for all $A,B:\ 0 < B < 2\min(1,A)$, contradiction.
\end{proof}

\end{document}